\theoremstyle{plain}
\newtheorem{theorem}{Theorem}[section]
\newtheorem{corollary}[theorem]{Corollary}
\newtheorem{definition}[theorem]{Definition}
\newtheorem{proposition}[theorem]{Proposition}
\newtheorem{conjecture}[theorem]{Conjecture}
\theoremstyle{definition}
\newtheorem{remark}[theorem]{Remark}
\numberwithin{equation}{section}
\newcommand{\diff}{\mathop{}\!\mathrm{d}}
\DeclareMathOperator*{\Hess}{Hess}
\DeclareMathOperator*{\esssup}{ess\,sup}
\title{Navier--Stokes regularity criteria in sum spaces}
\author[1]{Evan Miller}
\affil[1]{McMaster University, Department of Mathematics,

millee14@mcmaster.ca}
\begin{document}

\maketitle

\begin{abstract}
In this paper, we will consider regularity criteria for the Navier--Stokes equation in mixed Lebesgue sum spaces.
In particular, we will prove regularity criteria that only require control of the velocity, vorticity, or the positive part of the second eigenvalue of the strain matrix, in the sum space of two scale critical spaces.
This represents a significant step forward, because each sum space regularity criterion covers a whole family of scale critical regularity criteria in a single estimate.
In order to show this, we will also prove a new inclusion and inequality for sum spaces in families of mixed Lebesgue spaces with a scale invariance that is also of independent interest.

\end{abstract}

\section{Introduction}
The Navier--Stokes equation is one of the fundamental equations of fluid dynamics. For incompressible flow, meaning the density of the fluid is constant, the Navier--Stokes equation is given by
\begin{align} \label{Navier}
    \partial_t u -\nu \Delta u +(u \cdot \nabla )u +\nabla p
    &= 0\\
    \nabla \cdot u&=0,
\end{align}
where $u$ is the velocity, $\nu>0$ is the kinematic viscosity, and $p$ is the pressure. When $\nu=0,$ this reduces to the Euler equation for inviscid fluids. The first equation expresses Newton's second law, with 
$\partial_t u+ (u\cdot \nabla)u$ the acceleration in the Lagrangian frame, and 
$\nu \Delta u+\nabla p,$ the force divided by the mass---noting that the pressure $p$ in \eqref{Navier} is actually the physical pressure divided by the density of the fluid in question.
We will note here that the pressure can be determined entirely in terms of the velocity using the divergence free constraint and inverting the Laplacian with
\begin{equation}
    -\Delta p=\sum_{i,j=1}^3 
    \frac{\partial u_j}{\partial x_i}
    \frac{\partial u_i}{\partial x_j}.
\end{equation}
Because the gradient of pressure is a Lagrange multiplier for the divergence free constraint, it is possible to drop this term by using the Helmholtz projection onto the space of divergence free vector fields. We can rewrite the equation as
\begin{equation}
    \partial_t u -\nu \Delta u +P_{df}((u\cdot \nabla)u)=0.
\end{equation}

Two other crucially important objects for the study of the Navier--Stokes equation are the strain and the vorticity. The strain is the symmetric part of $\nabla u,$ with 
\begin{equation}
    S_{ij}=\frac{1}{2}\left(
    \partial_i u_j+ \partial_j u_i\right).
\end{equation}
The evolution equation for the strain is given by
\begin{equation}
    \partial_t S -\nu\Delta S +(u\cdot \nabla )S 
    +S^2 +\frac{1}{4}\omega\otimes\omega
    -\frac{1}{4}|\omega|^2 I_3+\Hess(p)
    =0.
\end{equation}
The vorticity is given by $\omega=\nabla \times u,$ and is a vector representation of the anti-symmetric part of $\nabla u.$ The evolution equation for the vorticity is given by
\begin{equation}
    \partial_t \omega- \nu\Delta\omega +(u\cdot \nabla)\omega
    -S\omega=0.
\end{equation}
While the velocity tells us how a parcel of the fluid is advected, the vorticity tells us how a parcel of the fluid is rotated, and the strain tells us how a parcel of the fluid is deformed, and for that reason is also known as the deformation matrix.

In his ground breaking work on the Navier--Stokes equation \cite{Leray}, Leray proved the existence of weak solutions to the Navier--Stokes equation in $L^\infty\left([0,+\infty); L^2 \right) 
\cap L^2 \left([0,+\infty);\dot{H}^1 \right)$ for generic initial data $u^0 \in L^2.$ Leray proved the existence of weak solutions in the sense of distributions satisfying the energy inequality
\begin{equation}
    \frac{1}{2}\|u(\cdot,t)\|_{L^2}^2+
    \nu\int_0^t \|\nabla u(\cdot,\tau)\|_{L^2}^2 \diff\tau
    \leq
    \frac{1}{2}\left\|u^0\right\|_{L^2}^2,
\end{equation}
for all $t>0.$ For strong solutions, this inequality holds with equality.
While Leray-Hopf weak solutions, as such solutions are generally known, must exist globally in time, they are not known to be either smooth or unique.

Kato and Fujita developed the notion of mild solutions based on the heat semi-group \cite{KatoFujita}. Mild solutions of the Navier--Stokes equation satisfy the equation
\begin{equation}
    \partial_t u -\nu\Delta u=-P_{df}((u\cdot\nabla)u),
\end{equation}
in the sense of convolution with the heat kernel 
as in Duhamel's formula.

Kato and Fujita proved that mild solutions must exist locally in time for initial data in $\dot{H}^1$ uniformly in terms of the $\dot{H}^1$ norm, and furthermore that such solutions must be unique and have higher regularity \cite{KatoFujita}. 
We will give a precise statement of the definition of a mild solution and Kato and Fujita's local existence theorem in section \ref{Defintions}.

While, unlike Leray-Hopf weak solutions, mild solutions must be smooth and unique, they may not exist globally in time. This represents a major conundrum, because while it is not really a problem if smooth solutions of the Navier--Stokes equation develop singularities in finite-times---mathematical singularities describe many phenomena that actually exist in nature, from the shock waves that develop when the sound barrier is broken to the formation of black holes---for any notion of a solution to be physically meaningful, there should at least be a guarantee that solutions are unique, as the Navier--Stokes equation is a deterministic model.

The Navier--Stokes equation has a scale invariance. If $u$ is a solution of the Navier--Stokes equation then so is $u^\lambda,$ for all $\lambda>0,$ where 
\begin{equation} \label{Scaling}
    u^\lambda (x,t) =\lambda u(\lambda x,\lambda^2 t).
\end{equation}
We will note that for initial data, this rescaling becomes
\begin{equation} \label{ScalingInitial}
    u^{0,\lambda} (x) =\lambda u^0 (\lambda x).
\end{equation}
Kato proved the existence of smooth solutions globally in time for small initial data in $L^3,$ which is critical with respect to this scaling in \cite{KatoL3}, and this was later extending by Koch and Tataru to the critical space $BMO^{-1}$ in \cite{KochTataru}.

It is also possible to guarantee that a solution must remain smooth as long as there is control on some scale critical quantity is controlled. The Ladyzhenskaya-Prodi-Serrin regularity criterion \cites{Ladyzhenskaya,Prodi,Serrin} states that if $u\in L^p_T L^q_x, \frac{2}{p}+\frac{3}{q}=1, 3<q\leq+\infty,$ then the solution is smooth and can be continued to a smooth solution for some time $\Tilde{T}>T.$
In particular, if $T_{max}<+\infty,$ then
\begin{equation}
    \int_0^{T_{max}}\|u(\cdot,t)\|_{L^q}^p \diff t=+\infty,
\end{equation}
where $T_{max}$ is the maximal time of existence for a mild solution for some initial data $u^0\in H^1_{df}.$
This was then extended to the endpoint case $p=+\infty,q=3$ by Escauriaza, Seregin, and \v{S}ver\'ak in \cite{ESS}, where they showed that if $T_{max}<+\infty,$ then
\begin{equation} \label{L3ESS}
    \limsup_{t \to T_{max}}\|u(\cdot,t)\|_{L^3}
    =+\infty.
\end{equation}
This result was improved by Seregin in \cite{SereginL3}, where the the limit supremum in \eqref{L3ESS} was replaced with the limit, and recently further improved by Tao, who proved a triply logarithmic lower bound on the rate of blowup of the $L^3$ norm \cite{TaoL3}. Somewhat more precisely, Tao showed that for an absolute constant $c>0,$ if $T_{max}<+\infty$,
\begin{equation}
    \limsup_{t \to T_{max}}
    \frac{\|u(\cdot,t)\|_{L^3}}
    {\left(\log\log\log \frac{1}{T_{max}-t}\right)^c}
    =+\infty.
\end{equation}
The Ladyzhenskaya-Prodi-Serrin regularity criterion has also been strengthened to require control in a family of scale critical spaces involving the endpoint Besov space,
$L^p_T \dot{B}^\sigma_{\infty,\infty}$
\cites{ChenZhangBesov,KOTbesov,KozonoShimadaBesov},
while the Escauriaza-Seregin-\v{S}ver\'ak regularity criterion has been strengthened to require control in time of the scale-critical nonendpoint Besov spaces, $L^\infty_T \dot{B}^{-1+\frac{3}{p}}_{p,q}$ 
\cites{AlbrittonBesov,GKPbesov}.

In this paper, we will extend the Ladyzhenskaya-Prodi-Serrin regularity criterion to the sum space $L^p_T L^q_x+ L^2_T L^\infty_x,$ for all $\frac{2}{p}+\frac{3}{q}=1, 3<q<+\infty.$
Our precise result is as follows.

\begin{theorem} \label{VelocitySumSpaceIntro}
Suppose $u\in C\left(\left[0,T_{max}\right);
\dot{H}^1_{df}\right)$ is a smooth solution of the Navier--Stokes equation. Let $3<q<+\infty,
\frac{2}{p}+\frac{3}{q}=1,$ and let 
$ u=v+\sigma.$ Then for all $0<T<T_{max}$
\begin{equation}
    \|\nabla u(\cdot,T)\|_{L^2}^2 \leq 
    \left\| \nabla u^0 \right\|_{L^2}^2
    \exp\left(\frac{C_p}{\nu^{p-1}} \int_0^T 
    \|v(\cdot,t)\|_{L^q}^p \diff t
    +\frac{1}{\nu} \int_0^T \|\sigma(\cdot,t)\|_{L^\infty}^2
    \diff t \right),
\end{equation}
where $C_p$ depends only on $p.$
In particular if $T_{max}<+\infty,$ then
\begin{equation}
    \frac{C_p}{\nu^{p-1}} \int_0^{T_{max}} 
    \|v(\cdot,t)\|_{L^q}^p \diff t
    +\frac{1}{\nu} \int_0^{T_{max}} 
    \|\sigma(\cdot,t)\|_{L^\infty}^2 \diff t
    =+\infty.
\end{equation}
\end{theorem}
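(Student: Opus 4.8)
The plan is to run the standard $\dot H^1$ (enstrophy) energy estimate, but to exploit the decomposition $u=v+\sigma$ inside the single advecting factor of the nonlinearity, so that its two pieces are absorbed by the two different scale-critical norms. First I would pair the Navier--Stokes equation with $-\Delta u$ in $L^2$. Integrating by parts, the time derivative produces $\tfrac12\frac{d}{dt}\|\nabla u\|_{L^2}^2$, the viscous term produces $\nu\|\Delta u\|_{L^2}^2$, and the pressure term drops because $\Delta u$ is divergence free, so $\int \nabla p\cdot\Delta u=0$. This yields
\begin{equation*}
\frac12\frac{d}{dt}\|\nabla u\|_{L^2}^2+\nu\|\Delta u\|_{L^2}^2=\int_{\mathbb R^3}(u\cdot\nabla)u\cdot\Delta u\diff x,
\end{equation*}
so the whole problem reduces to estimating this trilinear term (using, as usual, that $\|\nabla^2 u\|_{L^2}=\|\Delta u\|_{L^2}$).

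Next I would write $u=v+\sigma$ only in the advecting factor, splitting the integral as $\int(v\cdot\nabla)u\cdot\Delta u+\int(\sigma\cdot\nabla)u\cdot\Delta u$; note that no divergence-free hypothesis on $v$ or $\sigma$ individually is required, since the cancellation above used only $\nabla\cdot u=0$. For the $v$-piece I would apply Hölder with exponents $\big(q,r,2\big)$, where $\tfrac1r=\tfrac12-\tfrac1q$, bounding it by $\|v\|_{L^q}\|\nabla u\|_{L^r}\|\Delta u\|_{L^2}$, and then interpolate via Gagliardo--Nirenberg, $\|\nabla u\|_{L^r}\le C\|\nabla u\|_{L^2}^{1-3/q}\|\Delta u\|_{L^2}^{3/q}$. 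Since $q>3$ forces $1+\tfrac3q<2$, Young's inequality with the conjugate pair $\big(\tfrac{2q}{q+3},\tfrac{2q}{q-3}\big)$ absorbs the factor $\|\Delta u\|_{L^2}^{1+3/q}$ into $\tfrac{\nu}{4}\|\Delta u\|_{L^2}^2$, at the cost of $\tfrac{C_p}{\nu^{p-1}}\|v\|_{L^q}^{p}\|\nabla u\|_{L^2}^2$. The exponents close up exactly because of the scaling relation $\tfrac2p+\tfrac3q=1$: one checks $\tfrac{2q}{q-3}=p$, the surviving power of $\|\nabla u\|_{L^2}$ is $(1-\tfrac3q)\cdot\tfrac{2q}{q-3}=2$, and the power of $\nu$ is $\tfrac{q+3}{q-3}=p-1$.

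For the $\sigma$-piece I would use the $L^\infty$ bound directly: Hölder with exponents $(\infty,2,2)$ gives the bound $\|\sigma\|_{L^\infty}\|\nabla u\|_{L^2}\|\Delta u\|_{L^2}$, and Young's inequality splits this as $\tfrac{\nu}{4}\|\Delta u\|_{L^2}^2+\tfrac1\nu\|\sigma\|_{L^\infty}^2\|\nabla u\|_{L^2}^2$, so the quadratic power of $\|\sigma\|_{L^\infty}$ is precisely what $L^2_T$ integrability controls. Collecting the two estimates, absorbing both top-order terms into $\nu\|\Delta u\|_{L^2}^2$, and discarding the remaining nonnegative viscous term leaves the differential inequality $\frac{d}{dt}\|\nabla u\|_{L^2}^2\le\big(\tfrac{C_p}{\nu^{p-1}}\|v\|_{L^q}^p+\tfrac1\nu\|\sigma\|_{L^\infty}^2\big)\|\nabla u\|_{L^2}^2$. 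Grönwall's inequality on $[0,T]$ then produces the stated exponential bound, and the blow-up criterion follows by contradiction: if the time integral were finite, the enstrophy would remain bounded up to $T_{max}$, which by the Kato--Fujita local existence and continuation theory contradicts the maximality of $T_{max}$.

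The main subtlety I anticipate is the exponent bookkeeping rather than any deep estimate: the scheme depends on placing the decomposition in exactly one factor and on the strict inequality $q>3$, which is what makes the top-order power $1+\tfrac3q$ strictly subcritical and hence absorbable into the viscous term. At the excluded endpoint $q=3$ one has $1+\tfrac3q=2$ and the Young absorption fails, consistent with the endpoint $L^3$ criterion requiring the genuinely harder Escauriaza--Seregin--\v{S}ver\'ak argument in place of this energy method.
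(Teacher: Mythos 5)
Your proposal is correct and follows essentially the same route as the paper's proof: the $\dot H^1$ energy identity tested against $-\Delta u$, splitting $u=v+\sigma$ in the advecting factor only, H\"older plus $L^2$--$L^6$ interpolation and Sobolev on the $v$-piece with Young's inequality at the conjugate pair $\left(p,\tfrac{2q}{q+3}\right)$, the direct $L^\infty$--$L^2$--$L^2$ estimate with quadratic Young on the $\sigma$-piece, and Gr\"onwall. Your exponent bookkeeping ($p=\tfrac{2q}{q-3}$, surviving power $2$ on $\|\nabla u\|_{L^2}$, and $\nu^{-(p-1)}$) matches the paper's computation exactly.
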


We will note that this is a significant advance because the regularity criterion in the sum space 
$L^p_T L^q_x+ L^2_T L^\infty_x$ 
contains within it the whole family of regularity criteria
in the spaces $L^{p'}_T L^{q'}_x,$ where $\frac{2}{p'}+\frac{3}{q'}=1,$ and $q\leq q' \leq +\infty.$
We will prove this inclusion in section \ref{SumSpaceSection}.

There is a very large literature on regularity criteria for the Navier--Stokes equation. In addition to the aforementioned 
Ladyzhenskaya-Prodi-Serrin regularity criterion, regularity criteria have also been proven involving the vorticity, particularly the celebrated Beale-Kato-Madja regularity criterion \cite{BKM}, which applies both to solutions of the Euler and Navier--Stokes equations. There have also been a number of scale-critical, component-reduction-type regularity criteria that only require control on a certain part of the solution, including just two components of the vorticity $(\omega_1,\omega_2,0)$---or equivalently $e_3 \times \omega$ \cite{ChaeVort}, 
the derivative in just one direction
$\partial_3 u$ \cite{KukavicaZiane}, 
and just one component of the velocity $u_3$ \cites{CheminZhang,CheminZhangZhang}.

All of these component reduction results have a physical significance in that they can be seen as saying that blowup must in some sense be fully three dimensional and isotropic. For the 2D Navier--Stokes equation, there are global smooth solutions, and $e_3 \times \omega, \partial_3 u,$ and $u_3$ are all identically zero. If solutions of the 3D Navier--Stokes equation is treated as a perturbation of the 2D Navier--Stokes equation, this means that these regularity criteria can be seen as perturbation conditions.
If $e_3 \times \omega, \partial_3 u,$ or $u_3$ remain finite in the appropriate scale critical space, then our solution is close enough to being 2D to guarantee regularity. This is also consistent with the phenomenological picture of turbulence developed by Kolmogorov, which rests on the assumption that turbulence is locally isotropic at sufficiently small scales \cite{Kolmogorov}.

Another component reduction regularity criterion involves the positive part of the second eigenvalue of the strain matrix. If $T_{max}<+\infty,$ then for all $\frac{2}{p}+\frac{3}{q}=2, \frac{3}{2}<q\leq +\infty,$
\begin{equation} \label{EigenCrit}
    \int_0^{T_{max}}\|\lambda_2^+(\cdot,t)\|_{L^q}^p
    \diff t=+\infty.
\end{equation}
This was first proven by Neustupa and Penel \cites{NeustupaPenel1,NeustupaPenel2,NeustupaPenel3} and independently by the author using somewhat different methods in \cite{MillerStrain}.
This component reduction regularity criterion has a particular geometric interpretation. When the strain has two positive eigenvalues, it means that there is stretching in two directions and compression more strongly in a third, and therefore there is planar stretching and axial compression. When the strain has two negative eigenvalues, it means that there is compression in two directions, and stretching more strongly in a third, and therefore this is planar compression and axial stretching. The regularity criterion in \eqref{EigenCrit} therefore implies that finite-time blowup for the Navier--Stokes equation requires unbounded planar stretching.

We can generalize the regularity criterion \eqref{EigenCrit} to a regularity criterion on 
$\lambda_2^+$ in the sum space $L^p_T L^q_x+ L^1_T L^\infty_x$.

\begin{theorem} \label{StrainSumSpaceIntro}
Suppose $u\in C\left(\left[0,T_{max}\right);
\dot{H}^1_{df}\right)$ is a smooth solution of the Navier--Stokes equation. 
Let $\lambda_1(x,t) \leq \lambda_2(x,t) \leq \lambda_3(x,t)$ 
be the eigenvalues of the strain, $S(x,t)$,
and let $\lambda_2^+=\max\left(0,\lambda_2\right)$.
Let $\frac{3}{2}<q<+\infty, \frac{2}{p}+\frac{3}{q}=2,$ 
and let 
$\lambda_2^+ =f+g.$ 
Then for all $0<T<T_{max}$
\begin{equation}
    \|S(\cdot,T)\|_{L^2}^2 \leq 
    \left\| S^0 \right\|_{L^2}^2
    \exp\left(\frac{C_p}{\nu^{p-1}} \int_0^T 
    \|f(\cdot,t)\|_{L^q}^p \diff t
    +2\int_0^T \|g(\cdot,t)\|_{L^\infty} \diff t
    \right),
\end{equation}
where $C_p$ depends only on $p.$
In particular if $T_{max}<+\infty,$ then
\begin{equation}
    \frac{C_p}{\nu^{p-1}} \int_0^{T_{max}} 
    \|f(\cdot,t)\|_{L^q}^p \diff t
    +2\int_0^{T_{max}} \|g(\cdot,t)\|_{L^\infty} \diff t
    =+\infty.
\end{equation}
\end{theorem}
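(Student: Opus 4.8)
The plan is to derive a single differential inequality for the strain enstrophy $\|S(\cdot,t)\|_{L^2}^2$ and close it with Grönwall's inequality, handling the two pieces of the decomposition $\lambda_2^+=f+g$ separately. First I would pair the given strain evolution equation with $S$ in the Frobenius inner product and integrate over $\mathbb{R}^3$. The transport term vanishes because $\int (u\cdot\nabla)S:S=\frac12\int u\cdot\nabla|S|^2=0$ by incompressibility; the term $-\frac14\int|\omega|^2 I_3:S=-\frac14\int|\omega|^2\tr(S)=0$ since $\tr(S)=\nabla\cdot u=0$; and the pressure Hessian term vanishes after integrating by parts twice, since $\sum_{i,j}\partial_i\partial_j S_{ij}=\tfrac12\Delta(\nabla\cdot u)=0$, so that $\int\Hess(p):S=\int p\,\partial_i\partial_j S_{ij}=0$. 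This leaves
\begin{equation}
\frac12\frac{d}{dt}\|S\|_{L^2}^2+\nu\|\nabla S\|_{L^2}^2=-\int_{\mathbb{R}^3}\tr(S^3)-\frac14\int_{\mathbb{R}^3}\omega\cdot S\omega.
\end{equation}

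Next I would invoke the cubic identity $\int\omega\cdot S\omega=-\frac43\int\tr(S^3)$, which follows by comparing the identity above with the vorticity enstrophy balance $\frac{d}{dt}\|S\|_{L^2}^2+2\nu\|\nabla S\|_{L^2}^2=\int\omega\cdot S\omega$ (using $\|\omega\|_{L^2}^2=2\|S\|_{L^2}^2$ and $\|\nabla\omega\|_{L^2}^2=2\|\nabla S\|_{L^2}^2$ for divergence-free fields). Substituting collapses the right-hand side into a single cubic term, and $\tr(S^3)=3\det(S)=3\lambda_1\lambda_2\lambda_3$ gives
\begin{equation}
\frac12\frac{d}{dt}\|S\|_{L^2}^2+\nu\|\nabla S\|_{L^2}^2=-2\int_{\mathbb{R}^3}\lambda_1\lambda_2\lambda_3.
\end{equation}
The spectral estimate is the key structural input: since $\lambda_1\le\lambda_2\le\lambda_3$ and $\lambda_1+\lambda_2+\lambda_3=0$ force $\lambda_1\le0\le\lambda_3$, one has $-\lambda_1\lambda_3=|\lambda_1|\lambda_3\le\frac12(\lambda_1^2+\lambda_3^2)\le\frac12|S|^2$, whence $-\lambda_1\lambda_2\lambda_3=\lambda_2^+(-\lambda_1\lambda_3)\le\frac12\lambda_2^+|S|^2$ pointwise (the case $\lambda_2\le0$ is immediate, as then $\lambda_1\lambda_2\lambda_3\ge0$). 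This produces the master inequality $\frac12\frac{d}{dt}\|S\|_{L^2}^2+\nu\|\nabla S\|_{L^2}^2\le\int_{\mathbb{R}^3}\lambda_2^+|S|^2$.

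Now I would split $\int\lambda_2^+|S|^2=\int f|S|^2+\int g|S|^2$. The $L^\infty$ piece is controlled directly by $\int g|S|^2\le\|g\|_{L^\infty}\|S\|_{L^2}^2$. For the $L^q$ piece, $\int f|S|^2\le\|f\|_{L^q}\|S\|_{L^{2q'}}^2$ by Hölder (using $|S|^2\ge0$ and $\|f\|_{L^q}=\||f|\|_{L^q}$), and the Gagliardo--Nirenberg--Sobolev inequality on $\mathbb{R}^3$ gives $\|S\|_{L^{2q'}}^2\le C\|S\|_{L^2}^{2-3/q}\|\nabla S\|_{L^2}^{3/q}$ with interpolation exponent $\theta=\frac{3}{2q}\in(0,1)$ precisely because $\frac32<q<+\infty$. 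Since $\frac3q<2$, Young's inequality absorbs $\|\nabla S\|_{L^2}^{3/q}$ into the dissipation; the conjugate exponent is exactly $p=\frac{2q}{2q-3}$ by the scaling relation $\frac2p+\frac3q=2$, and that same relation forces the residual power of $\|S\|_{L^2}$ to be exactly $2$, so that
\begin{equation}
\int f|S|^2\le\nu\|\nabla S\|_{L^2}^2+\frac{C_p}{2\nu^{p-1}}\|f\|_{L^q}^p\|S\|_{L^2}^2,
\end{equation}
the weight $\nu^{-(p-1)}$ arising from the scaling of the absorbing constant. Cancelling the dissipation then yields $\frac{d}{dt}\|S\|_{L^2}^2\le\left(\frac{C_p}{\nu^{p-1}}\|f\|_{L^q}^p+2\|g\|_{L^\infty}\right)\|S\|_{L^2}^2$, and Grönwall's inequality over $[0,T]$ gives exactly the stated bound, with the blow-up criterion following as its contrapositive.

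The genuinely substantive content is the cubic identity and the spectral estimate $-\det(S)\le\frac12\lambda_2^+|S|^2$, which is what allows a \emph{component-reduced} scalar to dominate the entire enstrophy production; everything after the master inequality is the critical Gagliardo--Nirenberg--Young--Grönwall scheme. The main obstacle I expect is bookkeeping rather than depth: one must verify that the pressure Hessian truly cancels, that the two cubic terms combine into precisely $-2\int\det(S)$, and that the constants emerge so that the $L^\infty$ piece carries coefficient exactly $2$ and the $L^q$ piece carries weight $C_p\nu^{-(p-1)}$. Confirming that the interpolation exponent $\theta=\frac{3}{2q}$ and the Young conjugate exponent both align with $\frac2p+\frac3q=2$ is where the scale-criticality of the space is essential, and the range $\frac32<q<+\infty$ is exactly what keeps $2q'$ inside the Sobolev window and $\frac3q$ strictly below $2$.
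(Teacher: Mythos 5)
Your proof is correct, and from the master inequality onward it is exactly the paper's argument: the same H\"older splitting of $\int f|S|^2+\int g|S|^2$, the same interpolation exponent $\theta=\frac{3}{2q}$ between $L^2$ and $L^6$, the same Sobolev--Young absorption into the dissipation with conjugate exponent $b=\frac{2q}{3}$ producing the weight $\nu^{-(p-1)}$, and Gr\"onwall. The one genuine difference is upstream: the paper simply imports the enstrophy growth estimate
\begin{equation*}
\partial_t\|S\|_{L^2}^2=-2\nu\|S\|_{\dot{H}^1}^2-4\int_{\mathbb{R}^3}\det(S)\leq-2\nu\|S\|_{\dot{H}^1}^2+2\int_{\mathbb{R}^3}\lambda_2^+|S|^2
\end{equation*}
as Proposition \ref{EnstrophyGrowth}, citing \cite{MillerStrain}, whereas you re-derive it from scratch. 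Your route to the cubic identity $\int\omega\cdot S\omega=-\frac{4}{3}\int\tr\left(S^3\right)$ --- equating the strain and vorticity enstrophy balances via the isometry $\|\omega\|_{L^2}^2=2\|S\|_{L^2}^2$, $\|\nabla\omega\|_{L^2}^2=2\|\nabla S\|_{L^2}^2$ rather than integrating $\det(\nabla u)$ by parts --- is a clean alternative, and your spectral bound $-\det(S)\leq\frac{1}{2}\lambda_2^+|S|^2$ via $-\lambda_1\lambda_3\leq\frac{1}{2}\left(\lambda_1^2+\lambda_3^2\right)$ is exactly the structural input the cited proposition encodes. This buys self-containedness at the cost of length; the only caveat is that your derivation implicitly assumes both energy balances (hence the decay needed for the integrations by parts), which is standard for the smooth solutions considered here. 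One trivial slip: $\sum_{i,j}\partial_i\partial_jS_{ij}=\Delta(\nabla\cdot u)$, not $\frac{1}{2}\Delta(\nabla\cdot u)$, but either way it vanishes.
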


Finally, we will note that we can also express the regularity criterion for the vorticity in terms of sum space 
$L^p_T L^q_x+ L^1_T L^\infty_x$.

\begin{theorem} \label{VortSumSpaceIntro}
Suppose $u\in C\left(\left[0,T_{max}\right);
\dot{H}^1_{df}\right)$ is a smooth solution of the Navier--Stokes equation. Let $\frac{3}{2}<q<+\infty,
\frac{2}{p}+\frac{3}{q}=2,$ and let 
$\omega =v+\sigma.$ Then for all $0<T<T_{max}$
\begin{equation}
    \|\omega(\cdot,T)\|_{L^2}^2 \leq 
    \left\| \omega^0 \right\|_{L^2}^2
    \exp\left(\frac{C_p}{\nu^{p-1}} \int_0^T 
    \|v(\cdot,t)\|_{L^q}^p \diff t
    +\sqrt{2}\int_0^T \|\sigma(\cdot,t)\|_{L^\infty} \diff t
    \right),
\end{equation}
where $C_p$ depends only on $p.$
In particular if $T_{max}<+\infty,$ then
\begin{equation}
    \frac{C_p}{\nu^{p-1}} \int_0^{T_{max}} 
    \|v(\cdot,t)\|_{L^q}^p \diff t
    +\sqrt{2}\int_0^{T_{max}} \|\sigma(\cdot,t)\|_{L^\infty} \diff t
    =+\infty.
\end{equation}
\end{theorem}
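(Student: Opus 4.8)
The plan is to run an enstrophy estimate: differentiate $\frac12\|\omega(\cdot,t)\|_{L^2}^2$ in time using the vorticity equation $\partial_t\omega-\nu\Delta\omega+(u\cdot\nabla)\omega-S\omega=0$. Pairing this equation with $\omega$ in $L^2$, the transport term $\int_{\mathbb{R}^3}(u\cdot\nabla)\omega\cdot\omega\diff x$ vanishes after integration by parts because $\nabla\cdot u=0$, while the viscous term yields the dissipation $\nu\|\nabla\omega\|_{L^2}^2$. This isolates the vortex-stretching term, giving
\begin{equation}
\frac{1}{2}\frac{\diff}{\diff t}\|\omega\|_{L^2}^2+\nu\|\nabla\omega\|_{L^2}^2=\int_{\mathbb{R}^3}\omega\cdot S\omega\diff x.
\end{equation}
Everything reduces to estimating the right-hand side through the splitting $\omega=v+\sigma$, writing $\omega\cdot S\omega=\sigma\cdot S\omega+v\cdot S\omega$ and bounding the two contributions separately.

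For the $\sigma$ contribution I would use Cauchy--Schwarz,
\begin{equation}
\left|\int_{\mathbb{R}^3}\sigma\cdot S\omega\diff x\right|\leq\|\sigma\|_{L^\infty}\|S\|_{L^2}\|\omega\|_{L^2},
\end{equation}
and then the identity $\|S\|_{L^2}^2=\frac12\|\omega\|_{L^2}^2$, valid for any divergence-free field, to turn this into exactly $\frac{1}{\sqrt2}\|\sigma\|_{L^\infty}\|\omega\|_{L^2}^2$; after the energy identity is multiplied through by $2$ this produces the constant $\sqrt2$ in the statement. For the $v$ contribution, H\"older's inequality with exponents $q,2q',2q'$, where $q'=\frac{q}{q-1}$, gives $\left|\int_{\mathbb{R}^3}v\cdot S\omega\diff x\right|\leq\|v\|_{L^q}\|S\|_{L^{2q'}}\|\omega\|_{L^{2q'}}$. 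Since $u$ is recovered from $\omega$ by Biot--Savart, $S$ is a Riesz-transform image of $\omega$, so Calder\'on--Zygmund theory gives $\|S\|_{L^{2q'}}\lesssim\|\omega\|_{L^{2q'}}$ and $\|\nabla S\|_{L^2}\lesssim\|\nabla\omega\|_{L^2}$ for $1<2q'<\infty$, reducing matters to interpolating $\|\omega\|_{L^{2q'}}$ between $L^2$ and $L^6$.

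Defining $\theta$ by $\frac{1}{2q'}=\frac{1-\theta}{2}+\frac{\theta}{6}$ one finds $\theta=\frac{3}{2q}$, and the Sobolev inequality $\|\omega\|_{L^6}\lesssim\|\nabla\omega\|_{L^2}$ then yields $\left|\int_{\mathbb{R}^3}v\cdot S\omega\diff x\right|\leq C\|v\|_{L^q}\|\omega\|_{L^2}^{2(1-\theta)}\|\nabla\omega\|_{L^2}^{2\theta}$. Here the scaling relation $\frac{2}{p}+\frac{3}{q}=2$ does the essential work: it forces $2\theta=\frac{3}{q}=\frac{2}{p'}$ and $2(1-\theta)=\frac{2}{p}$, so that Young's inequality with conjugate exponents $p,p'$ absorbs $\|\nabla\omega\|_{L^2}^{2\theta}$ into $\frac{\nu}{2}\|\nabla\omega\|_{L^2}^2$ at the cost of a term $\frac{C_p}{\nu^{p-1}}\|v\|_{L^q}^p\|\omega\|_{L^2}^2$. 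Collecting the estimates, absorbing the dissipation, and discarding the remaining nonnegative $\frac{\nu}{2}\|\nabla\omega\|_{L^2}^2$ leaves the differential inequality
\begin{equation}
\frac{\diff}{\diff t}\|\omega\|_{L^2}^2\leq\left(\frac{C_p}{\nu^{p-1}}\|v\|_{L^q}^p+\sqrt2\,\|\sigma\|_{L^\infty}\right)\|\omega\|_{L^2}^2,
\end{equation}
from which Gr\"onwall's inequality gives the claimed bound, and the blowup statement follows by contraposition. I expect the vortex-stretching estimate to be the main obstacle: one must check that $\theta<1$---which is exactly where the strict inequality $q>\frac32$ is needed, since $\theta\to1$ as $q\to\frac32^+$---and must track the viscosity through Young's inequality to land on the sharp power $\nu^{-(p-1)}$, while the sharp identity $\|S\|_{L^2}=\frac{1}{\sqrt2}\|\omega\|_{L^2}$ is what pins the constant in the $\sigma$ term to $\sqrt2$ rather than a generic $O(1)$ constant.
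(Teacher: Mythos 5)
Your proposal is correct and follows essentially the same route as the paper's proof: the same enstrophy identity with the vanishing transport term, the same splitting of the vortex-stretching term via $\omega = v+\sigma$, the same H\"older/interpolation structure with exponent $\theta=\frac{3}{2q}$, the same Young's inequality bookkeeping yielding the power $\nu^{-(p-1)}$, and the same use of the exact identity $\|S\|_{L^2}=\frac{1}{\sqrt{2}}\|\omega\|_{L^2}$ to pin the constant $\sqrt{2}$ in the $\sigma$ term. The only minor difference is that you invoke Calder\'on--Zygmund theory to bound $\|S\|_{L^{2q'}}\lesssim\|\omega\|_{L^{2q'}}$, whereas the paper interpolates $\|S\|_{L^{2r}}$ between $\|S\|_{L^2}$ and $\|S\|_{L^6}$ and converts to $\omega$ via the $\dot{H}^\alpha$ isometry for divergence-free fields, which avoids singular-integral theory and keeps the constants explicit.
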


Given that we have proven a number of regularity criteria in Lebesgue sum spaces of the form 
$L^p_t L^q_x + L^k_T L^\infty_x$,
it is a natural to consider the structure of such sum spaces in more detail. In particular, we will consider what spaces are contained in this sum space. If we are working with standard Lebesgue spaces, rather than mixed spaces, it is a well known result---see for instance Chapter 6.1 in \cite{Folland}---that for all $1\leq q' <q <+\infty,$
\begin{equation} \label{StandardInclusionIntroA}
    L^q \subset L^{q'}+L^\infty.
\end{equation}
In fact, it is straightforward to show this inclusion also holds if $L^q$ is replaced with $L^{q,\infty},$ the endpoint Lorentz space also known as weak $L^q,$ in which case we have
\begin{equation} \label{StandardInclusionIntro}
        L^{q,\infty} \subset L^{q'}+L^\infty.
\end{equation}

Note that in Theorem \ref{VelocitySumSpaceIntro} we have a scaling relation $\frac{2}{p}+\frac{3}{q}=1$, so if we take 
$(p',q'), (p,q),$ and $(2,\infty),$ satisfying this scaling relations and $q'<q<+\infty,$ then the point $(p,q)$ is in some sense in between $(p',q')$ and $(2,\infty)$, so there is a reason to expect we may have an inclusion of the form 
$L_T^p L_x^q \subset 
L_T^{p'} L_x^{q'}+L_T^2 L_x^\infty$
analogous to the inclusion
\eqref{StandardInclusionIntroA}.
We will show that this inclusion does hold, and we do in fact have a slightly stronger inclusion.

\begin{theorem} \label{MixedLpSumIntro}
Suppose $1\leq k<+\infty, 1\leq  m<+\infty,$ and suppose
\begin{equation}
    \frac{k}{p}+\frac{m}{q}=1,
\end{equation}
and
\begin{equation}
    \frac{k}{p'}+\frac{m}{q'}=1,
\end{equation}
with $m <q' < q< +\infty.$
Then 
\begin{equation}
    L_T^p L_x^{q,\infty} \subset L_T^{p'} L_x^{q'}
    +L_T^k L_x^\infty.
\end{equation}
In particular,
for all $f\in  L_T^p L_x^{q,\infty}$,
we have the explicit decomposition,
$f=g+h$ with 
$g \in L_T^{p'} L_x^{q'}, h \in L_T^k L_x^\infty,$ where
\begin{equation}
    g(x,t)= \begin{cases}
        f(x,t), &\text{if } |f(x,t)|>
        \|f(\cdot,t)\|_{L^{q,\infty}}^{\frac{p}{k}}
        \\
        0, &\text{if } |f(x,t)|\leq
        \|f(\cdot,t)\|_{L^{q,\infty}}^{\frac{p}{k}}
            \end{cases},
\end{equation}
and
\begin{equation}
    h(x,t)= \begin{cases}
        f(x,t), &\text{if } |f(x,t)| \leq
        \|f(\cdot,t)\|_{L^{q,\infty}}^{\frac{p}{k}}
        \\
        0, &\text{if } |f(x,t)| >
        \|f(\cdot,t)\|_{L^{q,\infty}}^{\frac{p}{k}}
            \end{cases},
\end{equation}
and we have the bounds
\begin{equation} 
    \int_0^T \|g(\cdot,t)\|_{L^{q'}}^{p'} \diff t
    \leq \left(\frac{q}{q-q'}\right)^\frac{p'}{q'} 
    \int_0^T \|f(\cdot,t)\|_{L^{q,\infty}}^p \diff t,
\end{equation}
and
\begin{equation} 
    \int_0^T \|h(\cdot,t)\|_{L^\infty}^k \diff t
    \leq \int_0^T 
    \|f(\cdot,t)\|_{L^{q,\infty}}^p \diff t.
\end{equation}
\end{theorem}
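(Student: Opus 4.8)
The plan is to verify directly that the explicit decomposition $f = g + h$ written in the statement satisfies the two claimed bounds; everything reduces to an estimate at each fixed time $t$, after which one integrates in $t$. Throughout, I fix $t$ and write $w = f(\cdot,t)$, $\Lambda = \|f(\cdot,t)\|_{L^{q,\infty}}$, and let $\tau = \Lambda^{p/k}$ be the threshold appearing in the definitions of $g$ and $h$. I will work with the distribution function $d_w(\lambda) = |\{x : |w(x)| > \lambda\}|$ together with the elementary weak-type bound $d_w(\lambda) \le (\Lambda/\lambda)^q$, which is just the definition of the $L^{q,\infty}$ quasinorm rearranged.

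The bound on $h$ is immediate. By construction $|h(\cdot,t)| \le \tau$ pointwise, so $\|h(\cdot,t)\|_{L^\infty}^k \le \tau^k = \Lambda^p = \|f(\cdot,t)\|_{L^{q,\infty}}^p$, and integrating in $t$ gives the second inequality with constant $1$.

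The substance is the estimate on $g$. Since $g(\cdot,t) = w$ on $\{|w| > \tau\}$ and vanishes elsewhere, its distribution function is $d_g(\lambda) = d_w(\tau)$ for $\lambda < \tau$ and $d_g(\lambda) = d_w(\lambda)$ for $\lambda \ge \tau$. I would then apply the layer-cake formula and split the integral at $\tau$:
\begin{equation}
    \|g(\cdot,t)\|_{L^{q'}}^{q'} = q' \int_0^\infty \lambda^{q'-1} d_g(\lambda) \diff\lambda = d_w(\tau)\,\tau^{q'} + q'\int_\tau^\infty \lambda^{q'-1} d_w(\lambda)\diff\lambda.
\end{equation}
Bounding $d_w(\tau) \le (\Lambda/\tau)^q$ in the first term and $d_w(\lambda) \le (\Lambda/\lambda)^q$ in the integrand of the second, the condition $q' < q$ makes the tail integral $\int_\tau^\infty \lambda^{q'-q-1}\diff\lambda = \tau^{q'-q}/(q-q')$ converge, and the two contributions combine to give
\begin{equation}
    \|g(\cdot,t)\|_{L^{q'}}^{q'} \le \frac{q}{q-q'}\,\Lambda^q\,\tau^{q'-q}.
\end{equation}

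It remains to raise this to the power $p'/q'$, substitute $\tau = \Lambda^{p/k}$, and check that the resulting power of $\Lambda$ is exactly $p$; this is where the two scaling relations enter. Solving them gives $p/k = q/(q-m)$ and $p'/q' = k/(q'-m)$ (the hypothesis $m < q'$ guarantees $p'$ is a finite positive exponent), and a short computation shows $q + (p/k)(q'-q) = q(q'-m)/(q-m)$, so that the exponent $\tfrac{p'}{q'}\bigl[q + \tfrac{p}{k}(q'-q)\bigr]$ collapses to $kq/(q-m) = p$. This yields the pointwise-in-$t$ inequality $\|g(\cdot,t)\|_{L^{q'}}^{p'} \le (q/(q-q'))^{p'/q'}\|f(\cdot,t)\|_{L^{q,\infty}}^p$, and integrating in $t$ gives the first claimed bound. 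I expect the only real obstacle to be the exponent bookkeeping in this last step: the convergence of the tail integral forces $q' < q$, and one must confirm that the scaling relations are precisely what is needed to convert the power $q + (p/k)(q'-q)$ of $\Lambda$ into $p$ exactly, rather than the inequality holding only up to a loss.
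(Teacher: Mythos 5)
Your proposal is correct and follows essentially the same route as the paper: the paper isolates your fixed-time layer-cake estimate as a separate lemma (its Proposition on weak-$L^q$ functions supported where $|f|>R$, applied with $R=\|f(\cdot,t)\|_{L^{q,\infty}}^{p/k}$), and then performs the same exponent bookkeeping, just written in terms of the ratios $\tfrac{q}{q'}=\tfrac{1-k/p'}{1-k/p}$ rather than your explicit formulas $\tfrac{p}{k}=\tfrac{q}{q-m}$ and $\tfrac{p'}{q'}=\tfrac{k}{q'-m}$. Both computations collapse the exponent $\tfrac{p'}{q'}\bigl[q+\tfrac{p}{k}(q'-q)\bigr]$ to exactly $p$, so there is no gap.
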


\begin{remark}
We know from Theorem \ref{MixedLpSumIntro} that for all
\begin{equation}
    \frac{2}{p}+\frac{3}{q}=1,
\end{equation}
and
\begin{equation}
    \frac{2}{p'}+\frac{3}{q'}=1,
\end{equation}
$3<q'<q<+\infty,$
\begin{equation}
    L_T^p L_x^{q,\infty} \subset L_T^{p'} L_x^{q'}
    +L_T^2 L_x^\infty.
\end{equation}
This means that the regularity criterion in Theorem \ref{VelocitySumSpaceIntro} in the sum space 
$L^{p'}_T L^{q'}_x+ L^2_T L^\infty_x$ 
contains within it the whole family of regularity criteria
in the spaces $L^{p}_T L^{q,\infty}_x,$ where $\frac{2}{p}+\frac{3}{q}=1,$ and $q'< q < +\infty.$
Theorem \ref{MixedLpSumIntro} implies that this whole family of scale critical regularity criteria can be contained in a single estimate.

Likewise we can see from Theorem \ref{MixedLpSumIntro} 
that for all
\begin{equation}
    \frac{1}{p}+\frac{3}{2q}=1,
\end{equation}
and
\begin{equation}
    \frac{1}{p'}+\frac{3}{2q'}=1,
\end{equation}
$\frac{3}{2}<q'<q<+\infty,$
\begin{equation}
    L_T^p L_x^{q,\infty} \subset L_T^{p'} L_x^{q'}
    +L_T^1 L_x^\infty.
\end{equation}
This implies that the regularity criteria in Theorems \ref{StrainSumSpaceIntro} and \ref{VortSumSpaceIntro}, 
on $\lambda_2^+$ and $\omega$ respectively, 
in the sum space 
$L^{p'}_T L^{q'}_x+ L^1_T L^\infty_x$ 
contain within themselves the whole family of regularity criteria
in the spaces $L^{p}_T L^{q,\infty}_x,$ where $\frac{2}{p}+\frac{3}{q}=2,$ and $q' < q < +\infty.$

These regularity criteria represent a significant advance not only because they have been improved to only requiring control on two different pieces of $u,\lambda_2^+$, or $\omega$ in two different scale critical spaces rather than requiring control on all of $u,\lambda_2^+$, or $\omega$ in a single scale critical space, but particularly because these regularity criteria contain a whole family of scale critical regularity criteria in a single estimate.
\end{remark}

\begin{remark}
For a large number of evolution equations in nonlinear PDEs, scaling laws and scale invariant spaces play a very important role. This is true not just for the Navier--Stokes equation, but also for the nonlinear Schr\"odinger equation, the nonlinear wave equation, and many other nonlinear evolution equations. Suppose we have a nonlinear evolution equation on $\mathbb{R}^d \times \mathbb{R}^+$ with a scale invariance
\begin{equation}
    u^\lambda(x,t)=\lambda^a u(\lambda x,\lambda^b t),
\end{equation}
with $0<a\leq n$ and $a \leq b$.
Then for all $\frac{b}{p}+\frac{n}{q}=a,$
the space $L_T^p L_x^q$ is scale invariant.

Applying Theorem \ref{MixedLpSumIntro} we can see that for three sets of exponents this family of scale invariant spaces
$(p',q'),(p,q)$ and $\left(\frac{b}{a},\infty\right),$ with $\frac{n}{a}<q'<q<+\infty,$
we have
\begin{equation}
    L_T^p L_x^{q,\infty} \subset L_T^{p'} L_x^{q'}
    +L_T^{\frac{b}{a}} L_x^\infty.
\end{equation}
This means that Theorem \ref{MixedLpSumIntro} may have broader applications to nonlinear evolution equations, as the growth of families of scale critical $L^p_T L^q_x$ norms is ubiquitous in the theory of nonlinear evolution equations.
\end{remark}

\begin{remark}
One natural question to consider about Theorem \ref{MixedLpSumIntro} is whether the inclusion will still hold if the control if relaxed to being in weak $L^p$ in time, in addition to weak $L^q$ in space. 
As we have already noted, for purely spatial variables we have 
\begin{equation}
        L^{q,\infty} \left(\mathbb{R}^3\right) \subset L^{q'}\left(\mathbb{R}^3\right)
        +L^\infty\left(\mathbb{R}^3\right),
\end{equation}
when $q'<q<+\infty$,
so this certainly gives some hope that the inclusion in Theorem \ref{MixedLpSumIntro} will still hold when the control in time is slightly relaxed, yielding
\begin{equation}
    L_T^{p,\infty} L_x^{q,\infty} \subset L_T^{p'} L_x^{q'}
    +L_T^k L_x^\infty.
\end{equation}
Based on the proof of Theorem \ref{MixedLpSumIntro}, it does not appear that this inclusion holds, however we do not have a counterexample at this time. We will discuss this more in section \ref{SumSpaceSection}, after we have proven Theorem \ref{MixedLpSumIntro}.
\end{remark}

\begin{conjecture} \label{MixedLpConjecture}
Suppose $1\leq k<+\infty, 1<m<+\infty,$ and suppose
\begin{equation}
    \frac{k}{p}+\frac{m}{q}=1,
\end{equation}
and
\begin{equation}
    \frac{k}{p'}+\frac{m}{q'}=1,
\end{equation}
with $m\leq q' < q< +\infty.$
Then
\begin{equation}
    L_T^{p,\infty} L_x^{q,\infty} \not\subset 
    L_T^{p'} L_x^{q'}+L_T^k L_x^\infty.
\end{equation}
\end{conjecture}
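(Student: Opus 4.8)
The plan is to prove the conjectured non-inclusion by exhibiting an explicit \emph{separable} counterexample and showing that no decomposition can absorb the temporal weak-$L^p$ singularity. Throughout I would assume $m<q'<q$, treating the borderline value $q'=m$ (where $p'=+\infty$) separately at the end. First I would fix the spatial extremiser $\psi(x)=|x|^{-m/q}$ on $\mathbb{R}^m$, whose distribution function is $\mu(|\psi|>\alpha)=c_m\alpha^{-q}$, so that $\psi\in L^{q,\infty}_x$ with $\|\psi\|_{L^{q,\infty}}=c_m^{1/q}$, while $\psi\notin L^{q'}_x$ (heavy tail at infinity) and $\psi\notin L^\infty_x$ (spike at the origin). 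I then set
\[ f(x,t)=t^{-1/p}\,|x|^{-m/q}, \qquad (x,t)\in\mathbb{R}^m\times(0,T]. \]
Since $\|f(\cdot,t)\|_{L^{q,\infty}_x}=c_m^{1/q}t^{-1/p}$ and $t\mapsto t^{-1/p}$ lies in $L^{p,\infty}(0,T)$ but not in $L^p(0,T)$, the first step records that $f\in L^{p,\infty}_T L^{q,\infty}_x$, so $f$ is a legitimate candidate.

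The crux is to show $f\notin L^{p'}_T L^{q'}_x+L^k_T L^\infty_x$, and the main obstacle is that a competing decomposition $f=g+h$ may couple $x$ and $t$ arbitrarily, so one cannot simply invoke the threshold construction of Theorem \ref{MixedLpSumIntro}. I would remove this obstacle with a pointwise-in-time lower bound showing that every decomposition is at least as expensive as a threshold one. Given admissible $g,h$, write $M(t)=\|h(\cdot,t)\|_{L^\infty}$ and $G(t)=\|g(\cdot,t)\|_{L^{q'}}$. On the set $\{x:|f(x,t)|>2M(t)\}$ one has $|g|\geq|f|-|h|\geq|f|-M(t)>\tfrac12|f|$, whence
\[ G(t)\;\geq\;\tfrac12\Big\|f(\cdot,t)\,\mathbf{1}_{\{|f(\cdot,t)|>2M(t)\}}\Big\|_{L^{q'}}. \]
Because $f(\cdot,t)$ is unbounded at the origin, this high part is nonempty for every finite $M(t)$, so the bound is always informative.

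Next I would compute the right-hand side for the power profile: a direct integration gives that $\big\|f(\cdot,t)\mathbf{1}_{\{|f|>2M\}}\big\|_{L^{q'}}$ is comparable to $\phi(t)^{q/q'}M(t)^{1-q/q'}$, where $\phi(t)=t^{-1/p}$ and the exponent satisfies $1-q/q'<0$. Assuming both pieces were finite, $\int_0^T M^k\,\diff t<\infty$ and $\int_0^T G^{p'}\,\diff t<\infty$, the lower bound yields $\int_0^T \phi^{p'q/q'}M^{-p'(q/q'-1)}\,\diff t<\infty$. Applying Hölder with the conjugate pair determined by $a=k$ and $b=p'(q/q'-1)$,
\[ \int_0^T \phi^{\frac{sa}{a+b}}\diff t\leq\Big(\int_0^T M^{a}\diff t\Big)^{\frac{b}{a+b}}\Big(\int_0^T \phi^{s}M^{-b}\diff t\Big)^{\frac{a}{a+b}}, \qquad s=\tfrac{p'q}{q'}, \]
where the two scaling relations $\tfrac{k}{p}+\tfrac{m}{q}=1$ and $\tfrac{k}{p'}+\tfrac{m}{q'}=1$ force the critical identity $\tfrac{sa}{a+b}=p$. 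Hence both factors being finite would give $\int_0^T \phi^{p}\,\diff t<\infty$, i.e. $\int_0^T t^{-1}\,\diff t<\infty$, which is false; this contradiction shows no admissible decomposition exists.

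I expect the genuinely delicate points to be the two load-bearing steps above. The first is the passage from an arbitrary decomposition to the threshold lower bound: this is the step that I believe blocked a direct attack, and it is resolved cleanly by the elementary inequality $|g|>\tfrac12|f|$ on the high set, which shows that $x$--$t$ coupling cannot beat a threshold split. The second is the verification that the Hölder exponent collapses to the scale-critical value $p$, which is exactly where the hypothesis $\tfrac{k}{p}+\tfrac{m}{q}=\tfrac{k}{p'}+\tfrac{m}{q'}=1$ is used essentially. The remaining care concerns the precise meaning of the iterated weak norm $L^{p,\infty}_T L^{q,\infty}_x$ (I would take $\|\cdot\|_{L^{p,\infty}_T}$ of $t\mapsto\|f(\cdot,t)\|_{L^{q,\infty}_x}$) and the borderline parameter $q'=m$, where one replaces the $L^{p'}_T$ factor by $L^\infty_T$ and reruns the Hölder step. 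If these go through as anticipated, the argument in fact upgrades the conjectured non-inclusion to a theorem.
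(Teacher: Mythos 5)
The paper does not prove this statement: it is posed as Conjecture \ref{MixedLpConjecture}, and the author explicitly writes that no counterexample is known, offering only the heuristic that the proof of Theorem \ref{MixedLpSumIntro} appears to need $\|f(\cdot,t)\|_{L^{q,\infty}}$ to lie in $L^p$ in time rather than weak $L^p$. So there is no proof in the paper to compare against; your argument, if written out in full, would upgrade the conjecture to a theorem, and as far as I can check it is correct. The two load-bearing steps both hold. First, the reduction of an arbitrary decomposition to a threshold one is rigorous: on $\{|f(\cdot,t)|>2M(t)\}$ the triangle inequality gives $|g|>\tfrac{1}{2}|f|$, and since the superlevel-set computation for a pure power profile is an exact identity (it is the equality case of Proposition \ref{WeakLqProp}), one gets $G(t)\geq c\,\phi(t)^{q/q'}M(t)^{1-q/q'}$ for a.e.\ $t$, with the convention that the right side is $+\infty$ when $M(t)=0$, which is consistent because $\psi\notin L^{q'}$. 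Second, the H\"older step with exponents $\frac{a+b}{b},\frac{a+b}{a}$, where $a=k$ and $b=p'\left(\frac{q}{q'}-1\right)>0$, is valid, and the exponent identity $\frac{sa}{a+b}=\frac{kp'q}{kq'+p'(q-q')}=p$ does follow from $p=\frac{kq}{q-m}$ and $p'=\frac{kq'}{q'-m}$; finiteness of both factors would then force $\int_0^T t^{-1}\diff t<+\infty$, a contradiction. Three points need attention in a final write-up: (i) the paper's spaces live on $\mathbb{R}^3$, with $m$ only a scaling parameter, so you should take $\psi(x)=|x|^{-3/q}$ on $\mathbb{R}^3$, which has the same distribution function $c\alpha^{-q}$ and leaves every computation (all of which depend only on the distribution function) unchanged; (ii) the paper never defines $L^{p,\infty}_TL^{q,\infty}_x$, so you must state the iterated-norm definition you use --- the one you propose is the natural one; (iii) the borderline case $q'=m$, $p'=+\infty$ should be written out explicitly, though as you anticipate the same pointwise lower bound gives $M(t)\gtrsim \phi(t)^{q/(q-q')}=t^{-1/k}$ up to a constant, whence $\int_0^T M(t)^k\diff t=+\infty$ directly.
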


Finally we will consider the endpoint of the scale critical regularity criteria, where $p=+\infty.$ We will note that the requirement in Theorem \ref{VelocitySumSpaceIntro} that $q>3$ and the requirement in Theorems \ref{StrainSumSpaceIntro} and \ref{VortSumSpaceIntro} that $q>\frac{3}{2},$
imply that $p<+\infty$.
As we have previously mentioned, Escauriaza, Seregin, and \v{S}ver\'ak showed that 
if $T_{max}<+\infty,$ then
\begin{equation}
    \limsup_{t\to T_{max}}\|u(\cdot,t)\|_{L^3}=+\infty,
\end{equation}
which covers the endpoint case $L^\infty_T L^3_x.$ 
Applying the Sobolev inequality, this immediately implies that
\begin{equation}
    \limsup_{t\to T_{max}}
    \|\omega(\cdot,t)\|_{L^\frac{3}{2}}
    =+\infty,
\end{equation}
so we also have the endpoint regularity criterion for vorticity in $L^\infty_T L^\frac{3}{2}_x.$
The proof of the endpoint regularity criteria in \cite{ESS} is quite technical, and is not based on applying a Gr\"onwall estimate to control enstrophy growth, so in order to establish a regularity criteria for the velocity in 
$L^\infty_T L^3_x +L^2_T L^\infty_x$
or a regularity criteria for the vorticity in
$L^\infty_T L^\frac{3}{2}_x +L^1_T L^\infty_x,$
would require methods well beyond those used in this paper.
For the positive part of the second eigenvalue of the strain matrix, it still remains an open question whether $T_{max}<+\infty$ implies that
\begin{equation}
    \limsup_{t\to T_{max}}\|\lambda_2^+(\cdot,t)\|_{L^\frac{3}{2}}
    =+\infty.
\end{equation}
The author showed in \cite{MillerStrain} that if $T_{max}<+\infty,$ then
\begin{equation}
    \limsup_{t \to T_{max}}
    \left\|\lambda_2^+(\cdot,t)\right\|_{L^\frac{3}{2}}
    \geq 3\left(\frac{\pi}{2}\right)^\frac{4}{3} \nu.
\end{equation}
We are able to generalize this result to the sum space case.

\begin{theorem} \label{EigenEndpointIntro}
Suppose $u\in C\left(\left[0,T_{max}\right);
\dot{H}^1_{df}\right)$ is a smooth solution of the Navier--Stokes equation, and
suppose $h\in L^1\left(\left[0,T_{max}\right);
\mathbb{R}^+\right)$. 
Let
\begin{equation}
    f(x,t)= \begin{cases}
        \lambda_2^+(x,t), &\text{if } \lambda_2^+(x,t)> h(t)
        \\
        0, &\text{if } \lambda_2^+(x,t)\leq h(t)
            \end{cases}.
\end{equation}
If $T_{max}<+\infty,$ then
\begin{equation}
    \limsup_{t \to T_{max}}
    \left\|f(\cdot,t)\right\|_{L^\frac{3}{2}}
    \geq 3\left(\frac{\pi}{2}\right)^\frac{4}{3} \nu.
\end{equation}
\end{theorem}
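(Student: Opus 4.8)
The plan is to run the strain-enstrophy argument of \cite{MillerStrain} (the $p=+\infty$ endpoint underlying Theorem \ref{StrainSumSpaceIntro}), but to split the enstrophy production according to the decomposition $\lambda_2^+ = f+g$ that is built into the statement, absorbing the critical piece into the viscous dissipation and treating the subcritical piece by Gr\"onwall's inequality. First I would set $g := \lambda_2^+ - f$; by the definition of $f$ we have $0 \le g(x,t) \le h(t)$ pointwise, so $\|g(\cdot,t)\|_{L^\infty} \le h(t)$ with $h \in L^1\!\left([0,T_{max})\right)$. I would then argue by contradiction: assume $T_{max}<+\infty$ but $\limsup_{t\to T_{max}}\|f(\cdot,t)\|_{L^{3/2}} < 3(\pi/2)^{4/3}\nu =: M$, and derive a bound on the enstrophy $\|S(\cdot,t)\|_{L^2}^2$ that stays finite up to $T_{max}$, contradicting the local existence theory in $\dot{H}^1_{df}$ (recall $\|\nabla u\|_{L^2}^2 = 2\|S\|_{L^2}^2$ for divergence-free $u$).

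The starting point is the strain-enstrophy identity obtained by testing the strain evolution equation against $S$, which after the trace-free identities $\tr(S)=0$ and $\int_{\mathbb{R}^3}\tr\!\left(S^3\right)\diff x = -\tfrac{3}{4}\int_{\mathbb{R}^3}\omega^{\top} S\,\omega \diff x$ reduces the production to a multiple of $-\int_{\mathbb{R}^3}\det(S)\diff x = -\int_{\mathbb{R}^3}\lambda_1\lambda_2\lambda_3\diff x$. Because $\lambda_1 \le \lambda_2 \le \lambda_3$ sum to zero, this production is nonpositive wherever $\lambda_2 \le 0$ and is controlled pointwise by $\lambda_2^+|S|^2$, yielding a differential inequality of the form $\frac{\diff}{\diff t}\tfrac12\|S\|_{L^2}^2 + \nu\|\nabla S\|_{L^2}^2 \le K\int_{\mathbb{R}^3}\lambda_2^+|S|^2\diff x$ with the explicit constant $K$ of \cite{MillerStrain}. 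Splitting $\lambda_2^+ = f+g$ and estimating each term, H\"older together with the Sobolev embedding $\dot{H}^1 \hookrightarrow L^6$ gives $\int_{\mathbb{R}^3} f|S|^2\diff x \le \|f\|_{L^{3/2}}\|S\|_{L^6}^2 \le C_S^2\,\|f\|_{L^{3/2}}\,\|\nabla S\|_{L^2}^2$, while $\int_{\mathbb{R}^3} g|S|^2\diff x \le \|g(\cdot,t)\|_{L^\infty}\|S\|_{L^2}^2 \le h(t)\|S\|_{L^2}^2$. The threshold $M$ is exactly the value for which $K C_S^2 M = \nu$, so under the contradiction hypothesis there exist $\delta > 0$ and $T_0 < T_{max}$ with $K C_S^2\,\|f(\cdot,t)\|_{L^{3/2}} \le (1-\delta)\nu$ on $[T_0,T_{max})$. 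On that interval the critical term is strictly dominated by the dissipation, and after discarding the remaining nonnegative fraction $\delta\nu\|\nabla S\|_{L^2}^2$ the inequality collapses to $\frac{\diff}{\diff t}\|S(\cdot,t)\|_{L^2}^2 \le 2K\,h(t)\|S(\cdot,t)\|_{L^2}^2$, whence Gr\"onwall gives $\|S(\cdot,t)\|_{L^2}^2 \le \|S(\cdot,T_0)\|_{L^2}^2\exp\!\left(2K\|h\|_{L^1}\right)$ for all $t\in[T_0,T_{max})$, a bounded enstrophy contradicting the maximality of $T_{max}$.

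I expect the main obstacle to be the bookkeeping of the sharp constant $3(\pi/2)^{4/3}\nu$: it requires pairing the sharp Sobolev constant $C_S$ with the sharp pointwise bound on $-\det(S)$ in terms of $\lambda_2^+|S|^2$ so that $KC_S^2 = \tfrac13(2/\pi)^{4/3}$ exactly, which is the delicate computation already carried out in \cite{MillerStrain} and which I would import rather than redo. The genuinely new point — and the only place the hypothesis $h\in L^1$ enters — is that the truncation piece $g$ contributes only the multiplicative Gr\"onwall factor $\exp\!\left(2K\|h\|_{L^1}\right)$, finite precisely because $h$ is integrable in time; notably no smallness of $h$ is required, only integrability, which is what makes the endpoint statement quantitatively identical to the untruncated result of \cite{MillerStrain}.
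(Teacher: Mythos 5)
Your proposal is correct and follows essentially the same route as the paper: the contradiction hypothesis, the splitting $\lambda_2^+=f+g$ with $\|g(\cdot,t)\|_{L^\infty}\leq h(t)$, the enstrophy inequality $\partial_t\|S\|_{L^2}^2\leq -2\nu\|S\|_{\dot H^1}^2+2\int\lambda_2^+|S|^2$ from \cite{MillerStrain}, H\"older plus the sharp Sobolev constant to absorb the $f$-term into the dissipation below the threshold $3(\pi/2)^{4/3}\nu$, and Gr\"onwall on the $h$-term to bound the enstrophy up to $T_{max}$. Your constant bookkeeping ($KC_S^2M=\nu$ with $K=1$ in the paper's normalization, and the final factor $\exp(2\|h\|_{L^1})$) matches the paper exactly.
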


We do not have a regularity criteria for 
$\lambda_2^+ \in L^\infty_T L^\frac{3}{2}_x+ L^1_T L^\infty_x$ in general, but we do have a regularity criteria in this sum space so long as the portion in $L^\infty_T L^\frac{3}{2}_x$ is small, rather than just finite. We will note that the piece of $\lambda_2^+$ in
$L^1_T L^\infty_x$ is given by
\begin{equation}
    g(x,t)= \begin{cases}
        \lambda_2^+(x,t), 
        &\text{if } \lambda_2^+(x,t)\leq h(t) \\
        0, &\text{if } \lambda_2^+(x,t)> h(t)
            \end{cases}.
\end{equation}
We will also show analogous results to Theorem \ref{EigenEndpointIntro} for $u$ and $\omega$.

\begin{theorem} \label{VelocityEndpointIntro}
Suppose $u\in C\left(\left[0,T_{max}\right);
\dot{H}^1_{df}\right)$ is a smooth solution of the Navier--Stokes equation, and
suppose $h\in L^2\left(\left[0,T_{max}\right);
\mathbb{R}^+\right)$.
Let
\begin{equation}
    v(x,t)= \begin{cases}
        u(x,t), &\text{if } |u(x,t)|> h(t)
        \\
        0, &\text{if } |u(x,t)|\leq h(t) 
        \end{cases}.
\end{equation}
If $T_{max}<+\infty,$ then
\begin{equation}
    \limsup_{t \to T_{max}}
    \left\|v(\cdot,t)\right\|_{L^3}
    \geq \sqrt{3} \left(\frac{\pi}{2}\right)^\frac{2}{3} \nu.
\end{equation}
\end{theorem}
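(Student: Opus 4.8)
The plan is to argue by contradiction through an enstrophy estimate, with the threshold $\sqrt{3}\left(\frac{\pi}{2}\right)^{2/3}\nu$ arising precisely as $\nu$ divided by the sharp Sobolev constant for $\dot{H}^1(\mathbb{R}^3)\hookrightarrow L^6$. First I would record the decomposition $u=v+g$, where $v$ is the large part defined in the statement and $g=u-v$ is supported where $|u|\leq h(t)$, so that $\|g(\cdot,t)\|_{L^\infty}\leq h(t)$ and hence $g\in L^2_T L^\infty_x$ because $h\in L^2\left([0,T_{max})\right)$. Writing $C_S=\frac{1}{\sqrt{3}}\left(\frac{2}{\pi}\right)^{2/3}$ for the sharp constant in $\|w\|_{L^6}\leq C_S\|\nabla w\|_{L^2}$, so that $\frac{1}{C_S}=\sqrt{3}\left(\frac{\pi}{2}\right)^{2/3}$, I would suppose for contradiction that $\limsup_{t\to T_{max}}\|v(\cdot,t)\|_{L^3}<\nu/C_S$. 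Then there exist $\epsilon>0$ and $T^*<T_{max}$ with $C_S\|v(\cdot,t)\|_{L^3}\leq \nu-C_S\epsilon$ for all $t\in[T^*,T_{max})$.

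Next I would derive the enstrophy identity by pairing the momentum equation with $-\Delta u$; the pressure term drops by incompressibility, giving $\frac{1}{2}\frac{d}{dt}\|\nabla u\|_{L^2}^2+\nu\|\Delta u\|_{L^2}^2=\int (u\cdot\nabla)u\cdot\Delta u\,dx$. I would then split the transport velocity as $u=v+g$ in the single factor $u\cdot\nabla$, leaving $\nabla u$ and $\Delta u$ untouched, and bound the two pieces by H\"older: the large piece by $\|v\|_{L^3}\|\nabla u\|_{L^6}\|\Delta u\|_{L^2}$ and the bounded piece by $\|g\|_{L^\infty}\|\nabla u\|_{L^2}\|\Delta u\|_{L^2}\leq h(t)\|\nabla u\|_{L^2}\|\Delta u\|_{L^2}$. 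The Fourier identity $\|\nabla^2 u\|_{L^2}=\|\Delta u\|_{L^2}$ together with the sharp scalar Sobolev inequality applied to $|\nabla u|$ (using Kato's inequality $|\nabla|\nabla u||\leq|\nabla^2 u|$, so the sharp constant is not degraded) gives $\|\nabla u\|_{L^6}\leq C_S\|\Delta u\|_{L^2}$, whence the large piece is at most $C_S\|v\|_{L^3}\|\Delta u\|_{L^2}^2$.

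Then I would absorb. On $[T^*,T_{max})$ the large-piece bound $C_S\|v\|_{L^3}\|\Delta u\|_{L^2}^2\leq(\nu-C_S\epsilon)\|\Delta u\|_{L^2}^2$ is dominated by the dissipation, leaving a spare $C_S\epsilon\|\Delta u\|_{L^2}^2$, which Young's inequality uses to absorb $h\|\nabla u\|_{L^2}\|\Delta u\|_{L^2}$ at the cost of a term $\frac{h^2}{4C_S\epsilon}\|\nabla u\|_{L^2}^2$. This yields $\frac{d}{dt}\|\nabla u\|_{L^2}^2\leq\frac{h(t)^2}{2C_S\epsilon}\|\nabla u\|_{L^2}^2$, and Gr\"onwall gives $\|\nabla u(\cdot,t)\|_{L^2}^2\leq\|\nabla u(\cdot,T^*)\|_{L^2}^2\exp\left(\frac{1}{2C_S\epsilon}\int_{T^*}^{T_{max}}h(\tau)^2\,d\tau\right)$, which is finite because $h\in L^2$. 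A uniform bound on the $\dot{H}^1$ norm up to $T_{max}$ contradicts the maximality of $T_{max}$ via the Kato--Fujita continuation criterion, so the $\limsup$ must be at least $\sqrt{3}\left(\frac{\pi}{2}\right)^{2/3}\nu$.

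The main obstacle is making the constant sharp: everything hinges on using the optimal Sobolev constant $C_S$, since it is exactly its reciprocal that appears in the stated lower bound, and any non-sharp embedding constant would yield only a strictly weaker threshold. The one point requiring genuine care is routing the vector/tensor Sobolev inequality through the scalar one for $|\nabla u|$ without losing sharpness, which is where Kato's inequality and the identity $\|\nabla^2 u\|_{L^2}=\|\Delta u\|_{L^2}$ enter; once the critical balance $C_S\|v\|_{L^3}<\nu$ is set up, the Gr\"onwall step and the contradiction with blowup are routine.
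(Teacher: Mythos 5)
Your proposal is correct and follows essentially the same route as the paper: contradiction via the enstrophy identity paired against $-\Delta u$, splitting only the advecting velocity as $u=v+\sigma$, absorbing the large piece into the dissipation using the sharp Sobolev constant, absorbing the bounded piece by Young's inequality using the spare $\epsilon$ of dissipation, and closing with Gr\"onwall and the $\dot H^1$ continuation criterion. The only differences are cosmetic (your $\epsilon$ is the paper's $\epsilon/C_S$, and you make explicit the routing of the sharp constant through $\|\nabla u\|_{L^6}\leq C_S\|\Delta u\|_{L^2}$, which the paper uses implicitly).
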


\begin{theorem} \label{VortEndpointIntro}
Suppose $u\in C\left(\left[0,T_{max}\right);
\dot{H}^1_{df}\right)$ is a smooth solution of the Navier--Stokes equation, and
suppose $h\in L^1\left(\left[0,T_{max}\right);
\mathbb{R}^+\right)$. 
Let
\begin{equation}
    v(x,t)= \begin{cases}
        \omega(x,t), &\text{if } |\omega(x,t)|> h(t)
        \\
        0, &\text{if } |\omega(x,t)|\leq h(t)
        \end{cases}.
\end{equation}
If $T_{max}<+\infty,$ then
\begin{equation}
    \limsup_{t \to T_{max}}
    \left\|v(\cdot,t)\right\|_{L^\frac{3}{2}}
    \geq \frac{3 \pi^\frac{4}{3}}{2^\frac{5}{6}} \nu.
\end{equation}
\end{theorem}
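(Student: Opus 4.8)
\emph{Proof plan.} The plan is to argue by contradiction through an enstrophy estimate: I will show that if $\limsup_{t\to T_{max}}\|v(\cdot,t)\|_{L^{3/2}}$ is strictly smaller than the stated constant, then the enstrophy $\|\omega(\cdot,t)\|_{L^2}^2$ stays bounded as $t\to T_{max}$, so the mild solution continues past $T_{max}$, contradicting $T_{max}<+\infty$. The starting point is the vorticity equation $\partial_t\omega-\nu\Delta\omega+(u\cdot\nabla)\omega-S\omega=0$. Pairing it with $\omega$ in $L^2$, the advection term vanishes because $u$ is divergence free, leaving the enstrophy identity
\begin{equation*}
\frac{1}{2}\frac{\diff}{\diff t}\|\omega\|_{L^2}^2+\nu\|\nabla\omega\|_{L^2}^2=\int_{\mathbb{R}^3}\omega^T S\,\omega\,\diff x ,
\end{equation*}
so everything reduces to controlling the vortex stretching term on the right.

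First I would estimate $\int\omega^T S\omega\le\int|S|\,|\omega|^2$ and split one factor of $|\omega|$ using the pointwise truncation, writing $|\omega|=|v|+|\omega-v|$ so that $\int|S|\,|\omega|^2=\int|S|\,|\omega|\,|v|+\int|S|\,|\omega|\,|\omega-v|$. Since $|\omega-v|\le h(t)$ pointwise, the divergence-free identity $\|S\|_{L^2}=\tfrac{1}{\sqrt2}\|\omega\|_{L^2}$ together with Cauchy--Schwarz bounds the second piece by $\tfrac{h(t)}{\sqrt2}\|\omega\|_{L^2}^2$. For the first piece I would apply H\"older with exponents $\tfrac32,6,6$ to get $\int|S|\,|\omega|\,|v|\le\|v\|_{L^{3/2}}\|S\|_{L^6}\|\omega\|_{L^6}$, then invoke the sharp Sobolev embedding $\dot{H}^1(\mathbb{R}^3)\hookrightarrow L^6$ applied to $|\omega|$ and $|S|$, along with the companion identity $\|\nabla S\|_{L^2}=\tfrac{1}{\sqrt2}\|\nabla\omega\|_{L^2}$. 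Writing $\|f\|_{L^6}^2\le C_S^2\|\nabla f\|_{L^2}^2$ with the sharp constant $C_S^2=\tfrac13(2/\pi)^{4/3}$, this yields $\int|S|\,|\omega|\,|v|\le\tfrac{C_S^2}{\sqrt2}\|v\|_{L^{3/2}}\|\nabla\omega\|_{L^2}^2$.

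Combining these, the enstrophy identity becomes the differential inequality
\begin{equation*}
\frac{1}{2}\frac{\diff}{\diff t}\|\omega\|_{L^2}^2+\left(\nu-\frac{C_S^2}{\sqrt2}\|v(\cdot,t)\|_{L^{3/2}}\right)\|\nabla\omega\|_{L^2}^2\le\frac{h(t)}{\sqrt2}\|\omega\|_{L^2}^2 .
\end{equation*}
If $\limsup_{t\to T_{max}}\|v(\cdot,t)\|_{L^{3/2}}<\tfrac{\sqrt2\,\nu}{C_S^2}$, then for $t$ past some $t_0$ the coefficient of $\|\nabla\omega\|_{L^2}^2$ is strictly positive and may be discarded, leaving $\tfrac{\diff}{\diff t}\|\omega\|_{L^2}^2\le\sqrt2\,h(t)\|\omega\|_{L^2}^2$. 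Because $h\in L^1$, Gr\"onwall's inequality bounds the enstrophy on $[t_0,T_{max})$, contradicting the fact that $T_{max}<+\infty$ forces the enstrophy to be unbounded as $t\to T_{max}$. It then remains to identify the threshold: since $C_S^{-2}=3(\pi/2)^{4/3}$, one computes $\tfrac{\sqrt2\,\nu}{C_S^2}=3\sqrt2\,(\pi/2)^{4/3}\nu=\tfrac{3\pi^{4/3}}{2^{5/6}}\nu$, which is exactly the stated bound.

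The main obstacle, and the only step requiring genuine care, is extracting the \emph{sharp} constant rather than merely some constant. This forces me to use the exact value of the Sobolev constant for $\dot{H}^1\hookrightarrow L^6$ and to track every factor of $\tfrac{1}{\sqrt2}$ arising from the symmetric/antisymmetric decomposition of $\nabla u$ in both $\|S\|_{L^2}$ and $\|\nabla S\|_{L^2}$. I also need to justify applying the scalar Sobolev inequality to the vector field $\omega$ and the matrix field $S$, which I would do via the pointwise Kato/diamagnetic inequality $\bigl|\nabla|F|\bigr|\le|\nabla F|$. The remaining ingredients---the cancellation of the advection term, the enstrophy blowup criterion for mild solutions, and the Gr\"onwall step---are standard.
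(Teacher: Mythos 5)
Your proposal is correct and follows essentially the same route as the paper's proof: contradiction via the enstrophy identity, pointwise splitting of $\omega$ into the truncated pieces $v$ and $\sigma$ with $|\sigma|\leq h(t)$, H\"older with exponents $\left(\frac{3}{2},6,6\right)$ on the vortex stretching term, the sharp Sobolev constant together with the identity $\|S\|_{\dot{H}^\alpha}=\frac{1}{\sqrt{2}}\|\omega\|_{\dot{H}^\alpha}$, and Gr\"onwall with $h\in L^1$; your constant $\frac{\sqrt{2}\,\nu}{C_S^2}=\frac{3\pi^{4/3}}{2^{5/6}}\nu$ matches the paper's threshold exactly. The only cosmetic difference is that you split $|\omega|\leq|v|+|\sigma|$ after taking pointwise absolute values rather than splitting $\omega=v+\sigma$ inside the inner product, and you make explicit the Kato-type inequality needed to apply the scalar Sobolev inequality to $|S|$ and $|\omega|$, which the paper leaves implicit.
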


Recently, Barker and Prange showed that if
$\left(x_0,T_{max}\right)$ 
is a singular point for a local energy solution of the Navier--Stokes equation, then 
\begin{equation}
    \|u(\cdot,t)\|_{L^3\left(B_R(x_0)\right)}
    \geq C \nu,
\end{equation}
where $R=O(\sqrt{T_{max}-t})$ 
and $C$ is a universal constant \cite{BarkerPrange}. 
Theorem \ref{VelocityEndpointIntro} can be seen as complimenting this result. Barker and Prange showed that near a singular point, the $L^3$ norm of $u$ must be bounded below when restricting to smaller and smaller neighborhoods of $x_0$ 
as $t\to T_{max}$, 
whereas Theorem \ref{VelocityEndpointIntro} requires that the $L^3$ norm of $u$ must be bounded below when restricting only to larger and larger values of $u$ as $t \to T_{max}$.
Both results give lower bounds on the concentration of critical norms near singularities: 
Barker and Prange's result gives a lower bound on the concentration of the $L^3$ norm of $u$ in the domain as a solution approaches the blowup time, whereas Theorem \ref{VelocityEndpointIntro} gives a lower bound on the concentration of the $L^3$ norm in the range
as $t \to T_{max}$.

While it remains an open question whether 
$\|\lambda_2^+(\cdot,t)\|_{L^\frac{3}{2}}$
must blow up as $t \to T_{max}$ if $T_{max}<+\infty$, 
Theorem \ref{EigenEndpointIntro} and some further analysis that we will discuss in section \ref{EigenSection} suggest that this norm must blowup in order for a smooth solution of the Navier--Stokes equation to develop singularities in finite-time.

\begin{conjecture} \label{EigenEndpointConjecture}
Suppose $u\in C\left(\left[0,T_{max}\right);
\dot{H}^1_{df}\right)$ is a smooth solution of the Navier--Stokes equation, and $T_{max}<+\infty.$
Then
\begin{equation}
    \limsup_{t\to T_{max}}
    \|\lambda_2^+(\cdot,t)\|_{L^\frac{3}{2}}
    =+\infty.
\end{equation}
\end{conjecture}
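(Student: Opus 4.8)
The plan is to argue by contradiction, assuming $T_{max}<+\infty$ while $\limsup_{t\to T_{max}}\|\lambda_2^+(\cdot,t)\|_{L^{3/2}}=M<+\infty$, and to show that this boundedness forces the solution to continue past $T_{max}$. The natural starting point is the strain enstrophy inequality underlying Theorem \ref{StrainSumSpaceIntro}. Differentiating $\|S\|_{L^2}^2$, using the trace-free condition $\tr(S)=0$ to rewrite the cubic production term via $-\tr(S^3)=-3\det S$, and invoking the strain production identity from \cite{MillerStrain} to bound the net production by $C\int\lambda_2^+|S|^2\diff x$, one applies H\"older together with the Sobolev embedding $\dot{H}^1\hookrightarrow L^6$ to obtain a differential inequality of the form
\begin{equation}
\frac{1}{2}\frac{\diff}{\diff t}\|S(\cdot,t)\|_{L^2}^2+\nu\|\nabla S(\cdot,t)\|_{L^2}^2 \leq C\,\|\lambda_2^+(\cdot,t)\|_{L^{3/2}}\,\|\nabla S(\cdot,t)\|_{L^2}^2,
\end{equation}
where $C$ is a Sobolev constant. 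If the bound $M$ satisfied $CM<\nu$, the production could be absorbed into the dissipation, yielding monotone decay of the enstrophy and immediate continuation; this is precisely the small-norm endpoint criterion already available. The entire difficulty is therefore the \emph{borderline} regime in which $M$ is finite but not small.

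The obstruction is structural: because $\|\lambda_2^+\|_{L^{3/2}}$ is scale invariant under \eqref{Scaling}, it cannot be made small by rescaling, so a pure Gr\"onwall or smallness argument cannot close. I would therefore abandon the energy method of this paper and follow the compactness-and-rigidity strategy of Escauriaza, Seregin, and \v{S}ver\'ak \cite{ESS}. Concretely, I would select space-time points $(x_n,t_n)$ with $t_n\to T_{max}$ realizing the concentration of enstrophy, choose scales $\lambda_n$ normalizing $\|\nabla u\|_{L^2}$ on parabolic cylinders, and pass to a limit of the rescaled solutions $u^{(n)}(x,t)=\lambda_n u(x_n+\lambda_n x,\,t_n+\lambda_n^2 t)$. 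Using the Leray energy bound for parabolic compactness, the scale invariance of the $L^{3/2}$ norm of $\lambda_2^+$, and weak lower semicontinuity, the limit $\bar u$ should be a nontrivial ancient local-energy solution whose strain still obeys $\|\bar\lambda_2^+\|_{L^{3/2}}\leq M$. One would then aim to invoke backward uniqueness and unique continuation for the vorticity equation to force $\bar\omega\equiv 0$, contradicting the nontriviality produced by the concentration.

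The hard part will be converting control of $\lambda_2^+$ in $L^{3/2}$ into the pointwise or local bounds on $u$ and $\omega$ needed to run the backward uniqueness machinery. In the $L^3$ theory one obtains $\varepsilon$-regularity and local boundedness of $u$ directly from the critical norm; here the hypothesis only restricts the \emph{sign structure} of the strain, bounding the planar-stretching eigenvalue $\lambda_2^+$ while saying nothing a priori about the size of $u$, of $\omega$, or of the compressive directions. Establishing an $\varepsilon$-regularity theory keyed to $\lambda_2^+$, and showing that its scale-critical smallness propagates to local smoothness of the velocity, is exactly the missing ingredient; I expect this to be the genuine obstacle, and it is the reason the statement is posed as a conjecture rather than a theorem. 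A more modest intermediate target would be to establish blowup of $\|\lambda_2^+\|_{L^{3/2}}$ under an additional one-sided bound on $\lambda_3$ or an anisotropy hypothesis, where the production term can be more fully controlled and the borderline constant can be beaten.
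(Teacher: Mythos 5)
This statement is posed in the paper as a \emph{conjecture}, and the paper contains no proof of it; the author states explicitly that establishing it ``is still beyond the scope of the methods used in this paper.'' What the paper does provide is partial evidence: Theorem \ref{EigenEndpointIntro} (a lower bound of $3(\pi/2)^{4/3}\nu$ on the limsup of the $L^{3/2}$ norm of the large-value piece $f$ of $\lambda_2^+$), Corollary \ref{StrainCor} (concentration of the blowup in $L^p_TL^q_x$ for every $q>\frac{3}{2}$), and Theorem \ref{WeakConvergeZero} (if the $L^{3/2}$ norm of $f$ stays bounded, then $f(\cdot,t)\rightharpoonup 0$ weakly in $L^{3/2}$), together with the remark that the Escauriaza--Seregin--\v{S}ver\'ak route is blocked because there is no backward uniqueness result for $\lambda_2^+$. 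Your proposal is likewise not a proof: it is a strategy outline whose decisive step --- an $\varepsilon$-regularity theory that converts scale-critical smallness of $\lambda_2^+$ into local boundedness of $u$ and $\omega$, which is what the backward uniqueness and unique continuation machinery actually consumes --- is left entirely open, as you yourself acknowledge. Since that step is precisely the content of the conjecture's difficulty, the proposal cannot be accepted as a proof.

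That said, your diagnosis is accurate and aligns closely with the paper's own discussion. You correctly observe that the Gr\"onwall/absorption argument only closes when $\|\lambda_2^+\|_{L^{3/2}}$ is below the explicit Sobolev threshold (this is exactly Theorem \ref{EigenEndpointIntro} in the paper, applied with $h\equiv 0$), that scale invariance of the $L^{3/2}$ norm forecloses any smallness-by-rescaling argument, and that the natural next move is the compactness-and-rigidity scheme of \cite{ESS}. The paper's Theorem \ref{WeakConvergeZero} is in fact a first step down exactly that road (it reproduces the weak-convergence-to-zero input of the ESS argument), and the paper identifies the same missing ingredient you do, namely a substitute for backward uniqueness keyed to the strain eigenvalue rather than to $u$ or $\omega$. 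Your suggested fallback --- proving blowup of $\|\lambda_2^+\|_{L^{3/2}}$ under an auxiliary one-sided bound on $\lambda_3$ or an anisotropy hypothesis --- is a reasonable intermediate target, but it proves a different (conditional) statement, not the conjecture as posed.
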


\begin{remark}
Using Theorem \ref{MixedLpSumIntro}, we can strengthen the regularity criteria in Theorems \ref{VelocitySumSpaceIntro}, \ref{StrainSumSpaceIntro}, and \ref{VortSumSpaceIntro}, by further enlarging the space. We can relax the control required in Theorem \ref{VelocitySumSpaceIntro} from the space $L^p_T L^q_x+ L^2_T L^\infty_x$ to the slightly larger space
$L^p_T L^{q,\infty}_x+ L^2_T L^\infty_x.$
Likewise we can relax the control required in Theorems \ref{StrainSumSpaceIntro} and \ref{VortSumSpaceIntro} from the space $L^p_T L^q_x+ L^1_T L^\infty_x$
to the slightly large space
$L^p_T L^{q,\infty}_x+ L^1_T L^\infty_x.$
These corollaries are stated below.
\end{remark}

\begin{corollary} \label{VelocitySumSpaceWeakIntro}
Suppose $u\in C\left(\left[0,T_{max}\right);
\dot{H}^1_{df}\right)$ is a smooth solution of the Navier--Stokes equation. Let $3<q<+\infty,
\frac{2}{p}+\frac{3}{q}=1,$ and let 
$ u=v+\sigma.$ Then for all $0<T<T_{max}$
\begin{equation} 
    \|\nabla u(\cdot,T)\|_{L^2}^2 \leq 
    \left\| \nabla u^0 \right\|_{L^2}^2
    \exp\left(\Tilde{C}_p \int_0^T 
    \|v(\cdot,t)\|_{L^{q,\infty}}^p \diff t
    +\frac{2}{\nu} \int_0^T \|\sigma(\cdot,t)\|_{L^\infty}^2
    \diff t \right),
\end{equation}
where
\begin{equation}
    \Tilde{C}_p=
    \frac{C_{p'}}{\nu^{p'-1}}
    \left(\frac{q}{q-q'}\right)^\frac{p'}{q'}
    +\frac{2}{\nu},
\end{equation}
with $C_{p'}$ is taken as in 
Theorem \ref{VelocitySumSpaceIntro},
and $3<q'<q, \frac{2}{p'}+\frac{3}{q'}=1.$
In particular if $T_{max}<+\infty,$ then
\begin{equation}
    \Tilde{C}_p \int_0^T 
    \|v(\cdot,t)\|_{L^{q,\infty}}^p \diff t
    +\frac{2}{\nu} \int_0^T \|\sigma(\cdot,t)\|_{L^\infty}^2
    \diff t 
    =+\infty.
\end{equation}
\end{corollary}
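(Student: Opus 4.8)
The plan is to reduce Corollary \ref{VelocitySumSpaceWeakIntro} to Theorem \ref{VelocitySumSpaceIntro} by interposing the sum-space inclusion of Theorem \ref{MixedLpSumIntro}. The key observation is that the exponents satisfy the scaling relation $\frac{2}{p}+\frac{3}{q}=1$, which is precisely the hypothesis of Theorem \ref{MixedLpSumIntro} with $k=2$ and $m=3$; since moreover $3<q'<q<+\infty$ and $\frac{2}{p'}+\frac{3}{q'}=1$, all the hypotheses of that theorem are met with the roles of $(p,q)$ and $(p',q')$ exactly as stated.

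First I would apply Theorem \ref{MixedLpSumIntro} to the piece $v\in L_T^p L_x^{q,\infty}$, obtaining an explicit decomposition $v=g+h$ with $g\in L_T^{p'}L_x^{q'}$ and $h\in L_T^2 L_x^\infty$, together with the two quantitative bounds
\[
\int_0^T \|g(\cdot,t)\|_{L^{q'}}^{p'}\diff t
\leq \left(\frac{q}{q-q'}\right)^{\frac{p'}{q'}}
\int_0^T \|v(\cdot,t)\|_{L^{q,\infty}}^p\diff t,
\qquad
\int_0^T \|h(\cdot,t)\|_{L^\infty}^2\diff t
\leq \int_0^T \|v(\cdot,t)\|_{L^{q,\infty}}^p\diff t.
\]
Regrouping the given decomposition $u=v+\sigma$, I would write $u=g+(h+\sigma)$, which is now a splitting of $u$ into a piece in $L_T^{p'}L_x^{q'}$ and a piece in $L_T^2 L_x^\infty$, precisely the form to which Theorem \ref{VelocitySumSpaceIntro} applies at the exponents $(p',q')$.

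Applying Theorem \ref{VelocitySumSpaceIntro} to $u=g+(h+\sigma)$ then controls the enstrophy growth by the exponential of
\[
\frac{C_{p'}}{\nu^{p'-1}}\int_0^T \|g(\cdot,t)\|_{L^{q'}}^{p'}\diff t
+\frac{1}{\nu}\int_0^T \|(h+\sigma)(\cdot,t)\|_{L^\infty}^2\diff t,
\]
and it remains only to bound these two integrals in terms of $v$ and $\sigma$. The first is handled directly by the bound above, producing the coefficient $\frac{C_{p'}}{\nu^{p'-1}}\left(\frac{q}{q-q'}\right)^{\frac{p'}{q'}}$ in front of $\int_0^T\|v(\cdot,t)\|_{L^{q,\infty}}^p\diff t$. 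For the second, the elementary inequality
\[
\|(h+\sigma)(\cdot,t)\|_{L^\infty}^2
\leq 2\|h(\cdot,t)\|_{L^\infty}^2+2\|\sigma(\cdot,t)\|_{L^\infty}^2
\]
splits it into an $h$-term, which the second bound above converts into $\frac{2}{\nu}\int_0^T\|v(\cdot,t)\|_{L^{q,\infty}}^p\diff t$, and the $\sigma$-term $\frac{2}{\nu}\int_0^T\|\sigma(\cdot,t)\|_{L^\infty}^2\diff t$. Collecting the two contributions multiplying $\int_0^T\|v(\cdot,t)\|_{L^{q,\infty}}^p\diff t$ gives exactly
\[
\frac{C_{p'}}{\nu^{p'-1}}\left(\frac{q}{q-q'}\right)^{\frac{p'}{q'}}
+\frac{2}{\nu}=\Tilde{C}_p,
\]
while the $\sigma$-term appears with coefficient $\frac{2}{\nu}$ as claimed; the factor of $2$ (relative to the $\frac{1}{\nu}$ in Theorem \ref{VelocitySumSpaceIntro}) is precisely the price of the triangle inequality used to separate $h$ from $\sigma$. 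The ``in particular'' blowup statement follows in the standard way: were both time integrals finite on $[0,T_{max})$, the right-hand side would remain bounded, contradicting the blowup of $\|\nabla u(\cdot,t)\|_{L^2}$ at a finite maximal time of existence, which follows from the local existence theory of Kato and Fujita.

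Since every step is an application of a previously established result together with the elementary inequality $(a+b)^2\leq 2a^2+2b^2$, I do not anticipate a genuine analytic obstacle. The only point requiring care is the bookkeeping in the regrouping $u=g+(h+\sigma)$ and in tracking the constants, so that the two $v$-contributions combine into the stated $\Tilde{C}_p$ and the $\sigma$-coefficient emerges as $\frac{2}{\nu}$ rather than $\frac{1}{\nu}$; verifying that the hypotheses $3<q'<q<+\infty$ line up correctly with the inequality $m<q'<q$ of Theorem \ref{MixedLpSumIntro} (here $m=3$) is the other small item to confirm at the outset.
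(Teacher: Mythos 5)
Your proposal is correct and follows essentially the same route as the paper: decompose $v=g+h$ via Theorem \ref{MixedLpSumIntro} with $k=2$, $m=3$, regroup as $u=g+(h+\sigma)$, apply Theorem \ref{VelocitySumSpaceIntro} at $(p',q')$, and use $(a+b)^2\leq 2a^2+2b^2$ to split the $L^\infty$ term, which is exactly where the paper's factor $\frac{2}{\nu}$ also comes from. The constant bookkeeping matches the paper's proof of this corollary line for line.
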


\begin{corollary} \label{StrainSumSpaceWeakIntro}
Suppose $u\in C\left(\left[0,T_{max}\right);
\dot{H}^1_{df}\right)$ is a smooth solution of the Navier--Stokes equation. 
Let $\frac{3}{2}<q<+\infty, \frac{2}{p}+\frac{3}{q}=2,$ 
and let 
$\lambda_2^+ =f+g.$ 
Then for all $0<T<T_{max}$
\begin{equation}
    \|S(\cdot,T)\|_{L^2}^2 \leq 
    \left\| S^0 \right\|_{L^2}^2
    \exp\left( \Tilde{C}_p
    \int_0^T \|f(\cdot,t)\|_{L^{q,\infty}}^p \diff t
    +2\int_0^T \|g(\cdot,t)\|_{L^\infty} \diff t
    \right),
\end{equation}
where
\begin{equation}
    \Tilde{C}_p=
    \frac{C_{p'}}{\nu^{p'-1}}
    \left(\frac{q}{q-q'}\right)^\frac{p'}{q'} +2,
\end{equation}
with $C_{p'}$ is taken as in 
Theorem \ref{StrainSumSpaceIntro},
and $\frac{3}{2}<q'<q, \frac{2}{p'}+\frac{3}{q'}=2.$
In particular if $T_{max}<+\infty,$ then
\begin{equation}
    \Tilde{C}_p \int_0^T 
    \|f(\cdot,t)\|_{L^{q,\infty}}^p \diff t
    +2\int_0^T \|g(\cdot,t)\|_{L^\infty} \diff t
    =+\infty.
\end{equation}
\end{corollary}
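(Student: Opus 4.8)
The plan is to deduce this corollary directly by composing the sum-space inclusion of Theorem \ref{MixedLpSumIntro} with the regularity criterion of Theorem \ref{StrainSumSpaceIntro}, so that no new analysis of the strain evolution is needed. The idea is that the hypothesized piece $f\in L_T^p L_x^{q,\infty}$ already lives in a weak-in-space critical class, and Theorem \ref{MixedLpSumIntro} lets me trade it for a genuine $L_T^{p'}L_x^{q'}$ piece plus an $L_T^1 L_x^\infty$ remainder, after which Theorem \ref{StrainSumSpaceIntro} applies verbatim.

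First I would fix the decomposition $\lambda_2^+ = f+g$ and observe that the scaling relation $\frac{2}{p}+\frac{3}{q}=2$ is exactly the relation $\frac{k}{p}+\frac{m}{q}=1$ of Theorem \ref{MixedLpSumIntro} with the single choice $k=1$, $m=\frac{3}{2}$; the companion relation $\frac{2}{p'}+\frac{3}{q'}=2$ becomes $\frac{k}{p'}+\frac{m}{q'}=1$, and the hypothesis $\frac{3}{2}<q'<q<+\infty$ is precisely $m<q'<q<+\infty$. Thus Theorem \ref{MixedLpSumIntro} applies to $f$ and yields a splitting $f=g_1+h_1$ with $g_1\in L_T^{p'}L_x^{q'}$, $h_1\in L_T^1 L_x^\infty$, together with the quantitative bounds
\begin{equation}
    \int_0^T \|g_1(\cdot,t)\|_{L^{q'}}^{p'}\diff t
    \leq \left(\frac{q}{q-q'}\right)^{\frac{p'}{q'}}
    \int_0^T \|f(\cdot,t)\|_{L^{q,\infty}}^p\diff t,
    \qquad
    \int_0^T \|h_1(\cdot,t)\|_{L^\infty}\diff t
    \leq \int_0^T \|f(\cdot,t)\|_{L^{q,\infty}}^p\diff t.
\end{equation}

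Next I would regroup the full quantity as $\lambda_2^+ = g_1 + (h_1+g)$, where $g_1\in L_T^{p'}L_x^{q'}$ and $h_1+g\in L_T^1 L_x^\infty$, and feed this decomposition into Theorem \ref{StrainSumSpaceIntro}, obtaining
\begin{equation}
    \|S(\cdot,T)\|_{L^2}^2 \leq
    \left\|S^0\right\|_{L^2}^2
    \exp\left(\frac{C_{p'}}{\nu^{p'-1}}
    \int_0^T \|g_1(\cdot,t)\|_{L^{q'}}^{p'}\diff t
    +2\int_0^T \|h_1(\cdot,t)+g(\cdot,t)\|_{L^\infty}\diff t\right).
\end{equation}
It then remains only to bound the exponent in terms of $f$ and $g$: for the first term I substitute the $L^{q'}$ bound above, and for the second I apply the triangle inequality $\|h_1+g\|_{L^\infty}\leq \|h_1\|_{L^\infty}+\|g\|_{L^\infty}$ and then the $L^\infty$ bound on $h_1$, which turns $2\int_0^T \|h_1\|_{L^\infty}\diff t$ into $2\int_0^T \|f\|_{L^{q,\infty}}^p\diff t$. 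Collecting the two contributions proportional to $\int_0^T \|f\|_{L^{q,\infty}}^p\diff t$ produces exactly the constant $\Tilde{C}_p=\frac{C_{p'}}{\nu^{p'-1}}\left(\frac{q}{q-q'}\right)^{\frac{p'}{q'}}+2$, while the leftover term is $2\int_0^T \|g\|_{L^\infty}\diff t$, which is the stated enstrophy bound.

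Finally, the blowup statement follows by contraposition: were both integrals in the exponent finite up to $T_{max}$, the bound above would force $\limsup_{T\to T_{max}}\|S(\cdot,T)\|_{L^2}$ to remain finite, contradicting the fact used in the proof of Theorem \ref{StrainSumSpaceIntro} that a finite first singularity time requires the strain energy to become unbounded. Since the argument is a bookkeeping composition of two already-established estimates, I do not expect a genuine obstacle; the only point requiring care is the verification that the single pair $k=1$, $m=\frac{3}{2}$ reconciles the two distinct scaling conventions—the $\frac{2}{p}+\frac{3}{q}=2$ convention of the strain criterion against the $\frac{k}{p}+\frac{m}{q}=1$ convention of the inclusion—so that the two decompositions are compatible and the constants collapse to $\Tilde{C}_p$ exactly.
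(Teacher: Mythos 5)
Your proposal is correct and follows essentially the same route as the paper: the paper likewise applies Theorem \ref{MixedLpSumIntro} (with the scaling $\frac{2}{p}+\frac{3}{q}=2$ read as $k=1$, $m=\frac{3}{2}$, so the cutoff level is $\|f(\cdot,t)\|_{L^{q,\infty}}^{p}$) to split $f$ into an $L^{p'}_T L^{q'}_x$ piece and an $L^1_T L^\infty_x$ piece, regroups $\lambda_2^+$ accordingly, and feeds the result into Theorem \ref{StrainSumSpaceIntro}, with the same collection of constants into $\Tilde{C}_p$. No gaps.
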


\begin{corollary} \label{VortSumSpaceWeakIntro}
Suppose $u\in C\left(\left[0,T_{max}\right);
\dot{H}^1_{df}\right)$ is a smooth solution of the Navier--Stokes equation. 
Let $\frac{3}{2}<q<+\infty, \frac{2}{p}+\frac{3}{q}=2,$ 
and let 
$\omega=v+\sigma$. 
Then for all $0<T<T_{max},$
\begin{equation}
    \|\omega(\cdot,T)\|_{L^2}^2 \leq 
    \left\| \omega^0 \right\|_{L^2}^2
    \exp\left(\Tilde{C_p} \int_0^T 
    \|v(\cdot,t)\|_{L^{q,\infty}}^p \diff t
    +\sqrt{2}\int_0^T \|\sigma(\cdot,t)\|_{L^\infty} 
    \diff t \right),
\end{equation}
where
\begin{equation}
    \Tilde{C}_p=
    \frac{C_{p'}}{\nu^{p'-1}}
    \left(\frac{q}{q-q'}\right)^\frac{p'}{q'}
    +\sqrt{2},
\end{equation}
with $C_{p'}$ is taken as in 
Theorem \ref{VortSumSpaceIntro},
and $\frac{3}{2}<q'<q, \frac{2}{p'}+\frac{3}{q'}=2.$
In particular if $T_{max}<+\infty,$ then
\begin{equation}
    \Tilde{C_p} \int_0^T 
    \|v(\cdot,t)\|_{L^{q,\infty}}^p \diff t
    +\sqrt{2}\int_0^T \|\sigma(\cdot,t)\|_{L^\infty} 
    \diff t
    =+\infty.
\end{equation}
\end{corollary}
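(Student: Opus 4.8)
The plan is to deduce this corollary directly by composing the sum-space inclusion of Theorem \ref{MixedLpSumIntro} with the strong-space regularity estimate of Theorem \ref{VortSumSpaceIntro}; the weak-space bound is really nothing more than the strong-space bound run through the explicit decomposition. Fix an auxiliary pair $\frac{3}{2}<q'<q$ with $\frac{2}{p'}+\frac{3}{q'}=2$. Observe that the scaling relation $\frac{2}{p}+\frac{3}{q}=2$ may be rewritten as $\frac{1}{p}+\frac{3}{2q}=1$, and likewise for the primed exponents, so Theorem \ref{MixedLpSumIntro} applies with the choice $k=1$ and $m=\frac{3}{2}$ (both admissible, since $1\leq k<+\infty$ and $1\leq m<+\infty$, and $m=\frac{3}{2}<q'<q$).

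First I would apply Theorem \ref{MixedLpSumIntro} to the piece $v\in L^p_T L^{q,\infty}_x$, obtaining the explicit decomposition $v=g+h$, where $g$ retains the part of $v$ exceeding the threshold $\|v(\cdot,t)\|_{L^{q,\infty}}^{p}$ (recall $\frac{p}{k}=p$ here) and $h$ retains the remainder. The theorem furnishes $g\in L^{p'}_T L^{q'}_x$ and $h\in L^1_T L^\infty_x$ together with the quantitative bounds
\[
    \int_0^T \|g(\cdot,t)\|_{L^{q'}}^{p'}\diff t \leq \left(\frac{q}{q-q'}\right)^{\frac{p'}{q'}}\int_0^T\|v(\cdot,t)\|_{L^{q,\infty}}^p\diff t,
\]
and
\[
    \int_0^T \|h(\cdot,t)\|_{L^\infty}\diff t \leq \int_0^T\|v(\cdot,t)\|_{L^{q,\infty}}^p\diff t.
\]

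Next I would regroup the hypothesized decomposition of the vorticity as $\omega=g+(h+\sigma)$, which now presents its first piece in $L^{p'}_T L^{q'}_x$ and its second piece $h+\sigma$ in $L^1_T L^\infty_x$, and feed this into Theorem \ref{VortSumSpaceIntro} with the exponent pair $(p',q')$. This immediately yields
\[
    \|\omega(\cdot,T)\|_{L^2}^2 \leq \left\|\omega^0\right\|_{L^2}^2 \exp\left(\frac{C_{p'}}{\nu^{p'-1}}\int_0^T\|g(\cdot,t)\|_{L^{q'}}^{p'}\diff t + \sqrt{2}\int_0^T\|h(\cdot,t)+\sigma(\cdot,t)\|_{L^\infty}\diff t\right).
\]

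Finally, I would estimate the exponent. The triangle inequality in $L^\infty$ splits the second integral into a $\sigma$-contribution, which is exactly the $\sqrt{2}\int_0^T\|\sigma(\cdot,t)\|_{L^\infty}\diff t$ term displayed in the statement, and an $h$-contribution bounded by $\sqrt{2}\int_0^T\|v(\cdot,t)\|_{L^{q,\infty}}^p\diff t$ via the bound on $h$ above. Combining this with the bound on $g$ for the first integral collects all of the $\|v\|_{L^{q,\infty}}^p$ terms under the single constant $\Tilde{C}_p=\frac{C_{p'}}{\nu^{p'-1}}\left(\frac{q}{q-q'}\right)^{\frac{p'}{q'}}+\sqrt{2}$, giving the asserted estimate, and the blowup statement then follows at once by letting $T\to T_{max}$. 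I expect no genuine obstacle beyond the bookkeeping; the only points requiring care are verifying that the scaling exponents align so that Theorem \ref{MixedLpSumIntro} is invoked with $(k,m)=(1,\frac{3}{2})$ and the auxiliary pair $(p',q')$ is admissible for Theorem \ref{VortSumSpaceIntro}, and confirming that the coefficient $\sqrt{2}$ on the $L^\infty$-in-time term is exactly what survives when the $h$ piece is absorbed into $\Tilde{C}_p$.
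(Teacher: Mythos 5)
Your proposal is correct and follows essentially the same route as the paper's own proof: the paper likewise applies Theorem \ref{MixedLpSumIntro} with the threshold $\|v(\cdot,t)\|_{L^{q,\infty}}^{p}$ to split $v=\phi+\psi$ (your $g+h$), regroups $\omega=\phi+(\psi+\sigma)$, and feeds this into Theorem \ref{VortSumSpaceIntro} at the auxiliary exponents $(p',q')$, with the triangle inequality in $L^\infty$ yielding the same constant $\Tilde{C}_p$. Your identification of the scaling parameters $(k,m)=(1,\tfrac{3}{2})$ and your observation that the coefficient $\sqrt{2}$ survives unchanged (since the $L^\infty$ term enters linearly rather than squared) are both exactly what the paper does.
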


In section \ref{Defintions}, we will define our notation and the main spaces used in the paper, and we will state
the precise definition of mild solutions,
as well as some of the classical results that we will use in the paper.
In section \ref{EigenSection}, we will consider regularity criteria in sum spaces in terms of $\lambda_2^+$, proving Theorems \ref{StrainSumSpaceIntro} and \ref{EigenEndpointIntro}.
In section \ref{SumSpaceSection}, we will discuss the structure of mixed Lebesgue sum spaces, proving Theorem \ref{MixedLpSumIntro}, and we will also introduce the distribution function and weak $L^q$, proving a number of the core properties.
In section \ref{VelocitySection}, we will consider regularity criteria in sum spaces in terms of $u$, 
proving Theorems \ref{VelocitySumSpaceIntro} and \ref{VelocityEndpointIntro}.
In section \ref{VorticitySection}, we will consider regularity criteria in sum spaces in terms of $\omega$, 
proving Theorems \ref{VortSumSpaceIntro} and \ref{VortEndpointIntro}.

\section{Definitions and notation} \label{Defintions}

Before proceeding with the proofs of our results, we need to define a number of spaces.
First we will define the inhomogeneous Hilbert spaces on $\mathbb{R}^3.$

\begin{definition}
For all $\alpha \in \mathbb{R},$ let
\begin{equation}
    \|u\|_{H^\alpha}^2=
    \int_{\mathbb{R}^3} \left(1+4\pi^2 |\xi|^2 \right)^\alpha
    \left|\hat{u}(\xi)\right|^2 \diff\xi,
\end{equation}
and let 
\begin{equation}
    H^\alpha \left(\mathbb{R}^3\right)
    =
    \left\{u\in \mathcal{S}'\left(\mathbb{R}^3\right):
    \|u\|_{H^\alpha}<+\infty\right\},
\end{equation}
where $\mathcal{S}'\left(\mathbb{R}^3\right)$ 
is the space of tempered distributions.
\end{definition}

We have defined the space 
$H^\alpha\left(\mathbb{R}^3\right)$; 
now we will define the space
$\dot{H}^\alpha\left(\mathbb{R}^3\right)$.

\begin{definition}
For all $\alpha \in \mathbb{R},$ let
\begin{equation}
    \|u\|_{\dot{H}^\alpha}^2=
    \int_{\mathbb{R}^3} (2\pi)^{2\alpha} |\xi|^{2\alpha}
    \left|\hat{u}(\xi)\right|^2 \diff\xi,
\end{equation}
and let 
\begin{equation}
    \dot{H}^\alpha \left(\mathbb{R}^3\right)
    =
    \left\{u\in \mathcal{S}'\left(\mathbb{R}^3\right):
    \|u\|_{\dot{H}^\alpha}<+\infty\right\}.
\end{equation}
\end{definition}

Note that $H^\alpha\left(\mathbb{R}^3\right)$ is a Hilbert space 
for all $\alpha\in\mathbb{R},$
while $\dot{H}\left(\mathbb{R}^3\right)$ is a Hilbert space
for all $-\frac{3}{2}<\alpha<\frac{3}{2},$ although is still well defined outside of this range.
We will further note that 
for all $u\in \dot{H}^1\left(\mathbb{R}^3\right)$
\begin{equation}
    \|u\|_{\dot{H}^1}=\|\nabla u\|_{L^2},
\end{equation}
and for all $u\in H^1\left(\mathbb{R}^3\right)$
\begin{equation}
    \|u\|_{H^1}^2=\|u\|_{L^2}^2+\|\nabla u\|_{L^2}^2
\end{equation}
Another property of $\dot{H}^1$ is the Sobolev embedding
$\dot{H}^1\left(\mathbb{R}^3\right) 
\hookrightarrow L^6\left(\mathbb{R}^3\right),$
and the related Sobolev inequality.

\begin{theorem} \label{Sobolev}
 For all $f\in L^6\left(\mathbb{R}^3\right),$
 \begin{equation}
     \|f\|_{L^6} \leq 
     \frac{1}{\sqrt{3}}\left(
    \frac{2}{\pi}\right)^\frac{2}{3}
    \|\nabla f\|_{L^2}.
 \end{equation}
\end{theorem}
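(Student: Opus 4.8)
The inequality is the sharp Sobolev inequality on $\mathbb{R}^3$ with $p=2$, and the stated constant is exactly the Aubin--Talenti value, so the plan is to follow Talenti's rearrangement argument and to exhibit the extremizer explicitly. First I would reduce to radially symmetric, nonincreasing $f$: by the P\'olya--Szeg\H{o} inequality the symmetric decreasing rearrangement $f^*$ satisfies $\|f^*\|_{L^6}=\|f\|_{L^6}$ and $\|\nabla f^*\|_{L^2}\le\|\nabla f\|_{L^2}$, so the Rayleigh quotient $\|\nabla f\|_{L^2}^2/\|f\|_{L^6}^2$ can only decrease under rearrangement, and it suffices to minimize over radial profiles. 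For such profiles the problem becomes one-dimensional, with $\|\nabla f\|_{L^2}^2=4\pi\int_0^\infty |f'(r)|^2 r^2\diff r$ and $\|f\|_{L^6}^6=4\pi\int_0^\infty |f(r)|^6 r^2\diff r$.

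Next I would pass to the Euler--Lagrange equation for the minimization. A normalized minimizer solves $-\Delta f=\lambda f^5$ on $\mathbb{R}^3$, and using the dilation and multiplicative scaling invariances of the quotient one may fix the extremizer to be the Aubin--Talenti bubble $U(x)=\left(1+|x|^2\right)^{-1/2}$, which satisfies $-\Delta U=3U^5$. The sharp constant is then read off by evaluating the quotient on $U$.

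The final step is an explicit computation of the two norms of $U$. Using the Beta-function identity $\int_{\mathbb{R}^3}\left(1+|x|^2\right)^{-s}\diff x=\pi^{3/2}\,\Gamma(s-3/2)/\Gamma(s)$ with $s=3$ gives $\|U\|_{L^6}^6=\pi^2/4$, and integrating by parts together with $-\Delta U=3U^5$ gives $\|\nabla U\|_{L^2}^2=3\int_{\mathbb{R}^3}U^6\diff x=3\pi^2/4$. Forming the ratio yields $\|U\|_{L^6}^2/\|\nabla U\|_{L^2}^2=\tfrac{1}{3}\left(2/\pi\right)^{4/3}$, whose square root is precisely $\tfrac{1}{\sqrt{3}}\left(2/\pi\right)^{2/3}$, matching the stated constant.

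The main obstacle is the variational analysis underlying the second step rather than the computation: one must establish that a minimizer of the Rayleigh quotient exists and that, after rearrangement, the radial minimizer is exactly the bubble $U$. Because $\mathbb{R}^3$ is noncompact and the quotient is invariant under translations and dilations, minimizing sequences need not converge, so a concentration-compactness argument---or, following Talenti, a careful ODE analysis of the one-dimensional Euler--Lagrange problem together with uniqueness of the positive solution---is needed to rule out loss of mass and to identify the extremizer. Once existence and the explicit form of the extremizer are in hand, the remaining steps are routine; in practice I would cite Talenti or Aubin for the sharp constant and present only the verification on $U$.
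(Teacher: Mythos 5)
Your proposal is mathematically sound, but it is worth knowing that the paper does not prove this theorem at all: Theorem \ref{Sobolev} is quoted as a classical result, with the inequality attributed to Sobolev, the sharp constant to Talenti, and \cite{LiebLoss} given as a reference. So where the paper writes one sentence of citation, you have reconstructed the standard Talenti argument --- P\'olya--Szeg\H{o} reduction to radial profiles, the Euler--Lagrange equation, identification of the Aubin--Talenti bubble $U(x)=(1+|x|^2)^{-1/2}$ with $-\Delta U=3U^5$, and the explicit evaluation $\|U\|_{L^6}^6=\pi^2/4$, $\|\nabla U\|_{L^2}^2=3\pi^2/4$, giving $\|U\|_{L^6}/\|\nabla U\|_{L^2}=\tfrac{1}{\sqrt{3}}(2/\pi)^{2/3}$. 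I checked these computations and they are correct, so your sketch serves as a genuine verification that the numerical constant in the paper's statement is the sharp one, which the paper itself never exhibits. That said, you are candid that the one step you do not supply --- existence of a minimizer despite the translation and dilation invariance, and uniqueness of the positive radial solution identifying it as $U$ --- is exactly the hard analytic content, and you defer it to Talenti or a concentration-compactness argument. In other words, your "proof" ultimately rests on the same citation the paper uses; the added value is the explicit extremizer computation confirming the constant, not an independent derivation. If you wanted a self-contained proof of the inequality with this constant, the cleanest routes are Talenti's one-dimensional rearranged variational problem solved by an explicit ODE analysis, or the Lieb--Loss derivation via the sharp Hardy--Littlewood--Sobolev inequality; either would replace the citation with an actual argument, at the cost of several pages.
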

Theorem \ref{Sobolev} was first proven by Sobolev in \cite{Sobolev}, and the sharp version of this inequality was proven by Talenti \cite{Talenti}. For a thorough reference on this inequality and certain generalizations, see also \cite{LiebLoss}.
The Sobolev inequality will play an essential role in the proof of each of the regularity criterion, by allowing us to make use of the dissipation due to viscosity in controlling the solution.

Next we will define the subspaces of divergence free vector fields in the spaces $\dot{H}^\alpha\left(\mathbb{R}^3;\mathbb{R}^3\right)$
and $H^\alpha\left(\mathbb{R}^3;\mathbb{R}^3\right)$
This is useful because by building the divergence free constraint, $\nabla \cdot u=0,$ into our function space, we can treat the Navier--Stokes equation as an evolution equation on this function space and not a system of equations. We will do this by expressing condition $\nabla \cdot u=0$ in Fourier space, where it can be written as 
$\xi \cdot \hat{u}(\xi)=0.$

\begin{definition}
For all $\alpha \in \mathbb{R}$
\begin{equation}
    \dot{H}^\alpha_{df}=
    \left\{ u\in \dot{H}^\alpha\left(
    \mathbb{R}^3;\mathbb{R}^3\right):
    \xi \cdot \hat{u}(\xi)=0,
    \text{almost everywhere } \xi\in\mathbb{R}^3\right\}.
\end{equation}
\end{definition}

\begin{definition}
For all $\alpha \in \mathbb{R}$
\begin{equation}
    H^\alpha_{df}=
    \left\{ u\in H^\alpha\left(
    \mathbb{R}^3;\mathbb{R}^3\right):
    \xi \cdot \hat{u}(\xi)=0,
    \text{almost everywhere } \xi\in\mathbb{R}^3\right\}.
\end{equation}
\end{definition}

Now that we have defined the space $\dot{H}^1_{df}$, we will give the precise definition of a mild solution, developed by Fujita and Kato in \cite{KatoFujita}.

\begin{definition}
\label{MildSolutions}
Suppose $u \in C\left([0,T);\dot{H}^1_{df} \right ).$
Then $u$ is a mild solution to the Navier--Stokes equation if for all $0\leq t<T$
\begin{equation}
u(\cdot,t)=e^{\nu t \Delta}u^0
+\int_0^t e^{\nu(t-\tau)\Delta}
P_{df}\left(-(u \cdot \nabla)u\right)
(\cdot, \tau)\diff \tau,
\end{equation}
where $e^{t\Delta}$ is the operator associated with the heat semi-group given by convolution with the heat kernel. 
\end{definition}

Note that by using the projection $P_{df}$ onto the space $\dot{H}^1_{df},$ we are able to build the divergence free constraint into the definition of the solution without treating it as a separate equation to satisfy, and in particular without any need to make reference to the pressure. 
Kato and Fujita also proved the local in time existence of mild solutions, as well as their uniqueness and higher regularity. The proof is based on a Picard iteration scheme using the heat kernel, and the argument can only be made to close when $T$ is sufficiently small in terms of 
$\left\|u^0\right\|_{\dot{H}^1}$. The precise statement of their result is as follows.

\begin{theorem} \label{MildExistence}
For all $u^0\in \dot{H}^1_{df},$
there exists a unique mild solution to the Navier Stokes equation $u \in C\left([0,T);
\dot{H^1}_{df} \right),$
$u(\cdot,0)=u^0,$
where $T=\frac{C \nu^3}{||u^0||_{\dot{H^1}}^4}$,
and $C$ is an absolute constant 
independent of $u$ and $\nu.$
Furthermore, this solution will have higher regularity, 
$u \in C^\infty\left ((0,T)\times \mathbb{R}^3\right ).$ 
\end{theorem}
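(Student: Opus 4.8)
The plan is to prove Theorem~\ref{MildExistence} by a Banach fixed point argument applied to the Duhamel map associated with the mild formulation of Definition~\ref{MildSolutions}. Writing
\begin{equation}
    \Phi(u)(\cdot,t)=e^{\nu t\Delta}u^0-\int_0^t e^{\nu(t-\tau)\Delta}P_{df}\bigl((u\cdot\nabla)u\bigr)(\cdot,\tau)\diff\tau,
\end{equation}
a mild solution is exactly a fixed point of $\Phi$. I would work in the complete metric space given by the closed ball $B_R=\{u\in C([0,T];\dot{H}^1_{df}):\sup_{0\le t\le T}\|u(\cdot,t)\|_{\dot{H}^1}\le R\}$, and show that for $R$ and $T$ chosen appropriately in terms of $\|u^0\|_{\dot{H}^1}$ and $\nu$, the map $\Phi$ sends $B_R$ into itself and is a contraction there.

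The analytic heart of the argument is a pair of estimates. First, the heat semigroup smooths at the parabolic rate: for $s\ge r$, Plancherel together with $|\xi|^{s-r}e^{-\nu t|\xi|^2}\le C(\nu t)^{-(s-r)/2}$ yields $\|e^{\nu t\Delta}f\|_{\dot{H}^s}\le C(\nu t)^{-(s-r)/2}\|f\|_{\dot{H}^r}$, while $P_{df}$ is a bounded Fourier multiplier on every $\dot{H}^s$. Second, I would exploit the divergence free constraint to write the nonlinearity in divergence form, $(u\cdot\nabla)u=\nabla\cdot(u\otimes u)$, and invoke the standard product estimate in homogeneous Sobolev spaces on $\mathbb{R}^3$, which in the form $\|fg\|_{\dot{H}^{1/2}}\le C\|f\|_{\dot{H}^1}\|g\|_{\dot{H}^1}$ gives $\|u\otimes u\|_{\dot{H}^{1/2}}\le C\|u\|_{\dot{H}^1}^2$. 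Moving one derivative from the nonlinearity onto the semigroup, so that one estimates $e^{\nu(t-\tau)\Delta}$ from $\dot{H}^{-1/2}$ into $\dot{H}^1$ at the rate $(\nu(t-\tau))^{-3/4}$, the Duhamel term obeys
\begin{equation}
    \left\|\int_0^t e^{\nu(t-\tau)\Delta}P_{df}\nabla\cdot(u\otimes u)(\cdot,\tau)\diff\tau\right\|_{\dot{H}^1}\le C\nu^{-3/4}t^{1/4}\sup_{0\le\tau\le t}\|u(\cdot,\tau)\|_{\dot{H}^1}^2,
\end{equation}
where the integrable singularity $(\nu(t-\tau))^{-3/4}$ produces the crucial gain $t^{1/4}$.

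Combining these with the contractivity $\|e^{\nu t\Delta}u^0\|_{\dot{H}^1}\le\|u^0\|_{\dot{H}^1}$, I would take $R=2\|u^0\|_{\dot{H}^1}$; the self-mapping condition $\|u^0\|_{\dot{H}^1}+C\nu^{-3/4}T^{1/4}R^2\le R$ and the analogous contraction bound $C\nu^{-3/4}T^{1/4}(2R)\le\tfrac12$ both hold as soon as $T\le C'\nu^3/\|u^0\|_{\dot{H}^1}^4$, which is exactly the claimed existence time and reflects the $\dot{H}^1$-subcritical scaling $\|u^{0,\lambda}\|_{\dot{H}^1}=\lambda^{1/2}\|u^0\|_{\dot{H}^1}$ from \eqref{ScalingInitial}. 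The Banach fixed point theorem then produces a unique fixed point in $B_R$; strong continuity of the semigroup together with the $t^{1/4}$ decay of the Duhamel term near $\tau=0$ gives membership in $C([0,T];\dot{H}^1_{df})$, and testing the bilinear estimate on the difference of two arbitrary mild solutions upgrades uniqueness in the ball to uniqueness in the full class via a short Gr\"onwall argument.

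The main obstacle, and the most technical part, is the higher regularity statement $u\in C^\infty((0,T)\times\mathbb{R}^3)$. Here I would bootstrap the smoothing estimates: reintroducing time weighted auxiliary norms of the form $\sup_{0<t<T}(\nu t)^{(s-1)/2}\|u(\cdot,t)\|_{\dot{H}^s}$ and feeding the already-established $\dot{H}^1$ control back through the Duhamel formula, one gains a fixed amount of spatial regularity at each stage while accruing only integrable time singularities, so that $u(\cdot,t)\in\dot{H}^s$ for every $s$ and every $t>0$. Spatial smoothness then follows from the Sobolev embedding $\dot{H}^s\hookrightarrow C^k$ for $s$ large, and joint smoothness in $(x,t)$ follows by using the equation itself to trade time derivatives for spatial ones. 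The delicate points are keeping the time weights compatible across the iteration and controlling the behaviour as $t\to0^+$, where each additional derivative costs a further power of $t^{-1/2}$.
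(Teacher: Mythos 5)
This theorem is not proven in the paper: it is quoted as the classical local well-posedness result of Kato and Fujita \cite{KatoFujita}, with the paper only remarking that the proof is ``based on a Picard iteration scheme using the heat kernel'' that closes when $T$ is small in terms of $\left\|u^0\right\|_{\dot{H}^1}$. Your fixed-point argument is exactly that scheme, and the quantitative ingredients you cite --- the parabolic smoothing rate $(\nu t)^{-(s-r)/2}$, the product estimate $\|u\otimes u\|_{\dot{H}^{1/2}}\leq C\|u\|_{\dot{H}^1}^2$, the resulting gain $\nu^{-3/4}T^{1/4}$, and the existence time $T\sim\nu^3/\|u^0\|_{\dot{H}^1}^4$ forced by the subcritical scaling $\|u^{0,\lambda}\|_{\dot{H}^1}=\lambda^{1/2}\|u^0\|_{\dot{H}^1}$ --- are all correct, so your sketch is a sound rendition of the standard proof.
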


Note that if we take $u(\cdot,t),$ as initial data, the uniqueness result in Theorem \ref{MildExistence} combined with the lower bound on the time of existence, implies that 
\begin{equation}
    T_{max}-t \geq \frac{C \nu^3}
    {\|u(\cdot,t)\|_{\dot{H}^1}^4},
\end{equation}
and therefore if $T_{max}<+\infty,$
then for all $0\leq t<T_{max}$
\begin{equation} \label{BlowupEnstrophyLB}
    \|u(\cdot,t)\|_{\dot{H}^1}^4 \geq
    \frac{C \nu^3}{T_{max}-t}.
\end{equation}

We also need to define the mixed Lebesgue space $L^p_T L^q_x$.
\begin{definition}
for all $1\leq p,q \leq +\infty,$
\begin{equation}
    L^p_T L^q_x= L^p\left([0,T);
    L^q\left(\mathbb{R}^3\right)\right).
\end{equation}
For $1\leq p<+\infty$
\begin{equation}
    \|f\|_{L^p_T L^q_x}=
    \left(\int_0^T \|f(\cdot,t)\|_{L^q}^p 
    \diff t\right)^{\frac{1}{p}},
\end{equation}
and for $p=\infty,$
\begin{equation}
    \|f\|_{L^\infty_T L^q_x}=
    \esssup_{0\leq t<T} \|f(\cdot,t)\|_{L^q}.
\end{equation}
\end{definition}
Note that throughout the paper we will often drop the $\mathbb{R}^3$
when referring to $L^q \left(\mathbb{R}^3\right)$ or 
$\dot{H}^1 \left(\mathbb{R}^3\right)$. We will sometimes use the notation
\begin{equation}
    L^q_x=L^q \left(\mathbb{R}^3\right),
\end{equation}
when necessary for clarity in cases where both spatial and time variables are involved.

Additionally, we must define sum spaces, which play such an essential role our in results.
\begin{definition}
Let $X$ and $Y$ be Banach spaces, 
and let $V$ be a vector space with $X,Y \subset V.$ 
Then
\begin{equation}
    X+Y=\left\{x+y: x\in X, y\in Y\right\}.
\end{equation}
Furthermore, $X+Y$ is a Banach space with norm
\begin{equation}
    \|f\|_{X+Y}= \inf_{g+h=f} \|g\|_X+ \|h\|_Y.
\end{equation}
\end{definition}

Finally we will define enstrophy.
\begin{definition}
Let $u\in C\left([0,T_{max};\dot{H}^1_{df}\right)$ be a mild solution of the Navier--Stokes equation. Then for all $0\leq t<T$ the enstrophy is given by
\begin{align}
    \mathcal{E}(t)
    &=
    \frac{1}{2}\|\omega(\cdot,t)\|_{L^2}^2 \\
    &=
    \frac{1}{2}\|\nabla u(\cdot,t)\|_{L^2}^2 \\
    &=
    \|S\|_{L^2}^2.
\end{align}
\end{definition}

The enstrophy plays an important role in the Navier--Stokes equation, because as we showed in \eqref{BlowupEnstrophyLB},
if $T_{max}<+\infty,$
then for all $0\leq t<T_{max}$
\begin{equation}
    \|u(\cdot,t)\|_{\dot{H}^1}^4 \geq
    \frac{C \nu^3}{T_{max}-t}.
\end{equation}
In particular, this means that
if $T_{max}<+\infty,$ then
\begin{align}
    \lim_{t\to T_{max}} \mathcal{E}(t)
    &=
    \lim_{t\to T_{max}} 
    \frac{1}{2}\|u(\cdot,t)\|_{\dot{H}^1}^2 \\
    &=
    +\infty.
\end{align}
Consequently, the proofs of all of our regularity criteria will rely on estimates for the growth of enstrophy defined in terms of 
$S, u,$ or $\omega$, because it is sufficient to control enstrophy up until some time $T$,
to guarantee that a smooth solution can be continued to some time $\Tilde{T}>T.$ 

Note that the various definitions of enstrophy 
in terms of $S, u,$ and $\omega$ are equivalent due to an isometry for the strain, vorticity and gradient of divergence free vector fields proven by the author in \cite{MillerStrain}.

\begin{proposition} \label{isometry}
For all $u\in \dot{H}^{\alpha+1}_{df},$
\begin{align}
    \|\nabla u\|_{\dot{H}^\alpha}^2
    &=
    \|\omega\|_{\dot{H}^\alpha}^2\\
    &=
    \frac{1}{2}\|S\|_{\dot{H}^\alpha}^2.
\end{align}
\end{proposition}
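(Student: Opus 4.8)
The plan is to pass to the Fourier side and reduce the whole statement to a pointwise computation of symbols on the divergence-free constraint surface $\{\xi\cdot\hat u(\xi)=0\}$, followed by Plancherel's theorem. The three objects $\nabla u$, $\omega=\nabla\times u$, and $S$ are all obtained from $u$ by applying constant-coefficient first-order differential operators, so in Fourier variables each is multiplication of $\hat u(\xi)$ by a matrix- or vector-valued symbol linear in $\xi$. The hypothesis $u\in\dot H^{\alpha+1}_{df}$ guarantees that $\nabla u$, $\omega$, and $S$ all lie in $\dot H^\alpha$, so every integral below converges. Since the homogeneous norm places the common weight $(2\pi)^{2\alpha}|\xi|^{2\alpha}$ in front of every frequency, it suffices to prove the pointwise identities between the squared symbols at $\alpha=0$ and then integrate each against this single weight; the parameter $\alpha$ plays no role beyond carrying that common factor, so I would treat all $\alpha$ at once.

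First I would record the symbols. Using $\widehat{\partial_j f}(\xi)=2\pi i\,\xi_j\hat f(\xi)$, we have $\widehat{(\nabla u)}_{ij}(\xi)=2\pi i\,\xi_i\hat u_j(\xi)$, $\hat\omega(\xi)=2\pi i\,\xi\times\hat u(\xi)$, and $\widehat{S}_{ij}(\xi)=\pi i\,(\xi_i\hat u_j(\xi)+\xi_j\hat u_i(\xi))$. For the vorticity equality I would invoke the cross-product identity $|\xi\times\hat u|^2=|\xi|^2|\hat u|^2-|\xi\cdot\hat u|^2$; the divergence-free hypothesis $\xi\cdot\hat u=0$ annihilates the second term, giving $|\hat\omega(\xi)|^2=4\pi^2|\xi|^2|\hat u(\xi)|^2=|\widehat{(\nabla u)}(\xi)|^2$ pointwise in $\xi$. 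Integrating this identity against $(2\pi)^{2\alpha}|\xi|^{2\alpha}$ and applying Plancherel yields the first equality, $\|\nabla u\|_{\dot H^\alpha}^2=\|\omega\|_{\dot H^\alpha}^2$.

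The substance of the argument is the strain identity. I would expand the Frobenius symbol $\sum_{i,j}|\xi_i\hat u_j+\xi_j\hat u_i|^2$, which splits into a diagonal contribution proportional to $|\xi|^2|\hat u|^2$ together with cross terms proportional to $|\xi\cdot\hat u|^2$; once again the constraint $\xi\cdot\hat u=0$ removes the cross terms, leaving $|\widehat S(\xi)|^2$ equal to a constant multiple of $|\xi|^2|\hat u|^2$. Comparing that constant against $|\widehat{(\nabla u)}(\xi)|^2=4\pi^2|\xi|^2|\hat u|^2$ fixes the numerical ratio between $\|\nabla u\|_{\dot H^\alpha}^2$ and $\|S\|_{\dot H^\alpha}^2$, and integrating against the common weight completes the chain of equalities asserted in the proposition. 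I expect the only real obstacle to be the careful bookkeeping of this numerical constant: one must account simultaneously for the numerical factors produced by the symmetrization defining $S$ and by the expansion of the Frobenius symbol, and for the exact cancellation of the cross term. It is precisely the divergence-free hypothesis that makes that cancellation exact, and without it the identity fails, so the constraint must be invoked at exactly the right point in the expansion.
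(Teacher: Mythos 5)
The paper never proves Proposition \ref{isometry} at all; it is imported from \cite{MillerStrain}, so there is no in-paper argument to compare against. Your Fourier--Plancherel reduction is the natural route and is methodologically sound: the symbols are correct for the paper's Fourier convention (matching the $(2\pi)^{2\alpha}|\xi|^{2\alpha}$ weight in the definition of $\dot H^\alpha$), the Lagrange identity $|\xi\times\hat u|^2=|\xi|^2|\hat u|^2-|\xi\cdot\hat u|^2$ is valid for real $\xi$ against complex $\hat u$ with $|\xi\cdot\hat u|$ the complex modulus, and the constraint $\xi\cdot\hat u=0$ built into $\dot H^{\alpha+1}_{df}$ kills the defect term, so your proof of $\|\nabla u\|_{\dot H^\alpha}^2=\|\omega\|_{\dot H^\alpha}^2$ is complete.

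The genuine gap is that you stop exactly where the content of the second equality lies: you defer the numerical constant as ``bookkeeping,'' but beyond the cancellation of the cross term, the exact constant \emph{is} the proposition. Carrying your expansion through, the cross term is $+2\,\mathrm{Re}\sum_{i,j}\xi_i\xi_j\hat u_j\overline{\hat u_i}=2|\xi\cdot\hat u|^2$, so
\begin{align*}
    \left|\hat S(\xi)\right|^2
    =\pi^2\sum_{i,j=1}^3\left|\xi_i\hat u_j(\xi)+\xi_j\hat u_i(\xi)\right|^2
    =2\pi^2\left(|\xi|^2|\hat u(\xi)|^2+|\xi\cdot\hat u(\xi)|^2\right)
    =\frac{1}{2}\left|\widehat{(\nabla u)}(\xi)\right|^2
\end{align*}
on the constraint set $\xi\cdot\hat u=0$. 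Integrating against the common weight gives $\|S\|_{\dot H^\alpha}^2=\frac{1}{2}\|\nabla u\|_{\dot H^\alpha}^2$, i.e.\ the chain of equalities should terminate in $2\|S\|_{\dot H^\alpha}^2$, not $\frac{1}{2}\|S\|_{\dot H^\alpha}^2$: the proposition as printed has the factor on the wrong side (off by a factor of $4$). That this is a typo in the statement rather than an error in the computation is corroborated by the paper itself: the definition of enstrophy asserts $\frac{1}{2}\|\nabla u\|_{L^2}^2=\|S\|_{L^2}^2$, and the proofs of Theorems \ref{VortSumSpace} and \ref{VortEndpoint} use $\|S\|_{L^2}=\frac{1}{\sqrt{2}}\|\omega\|_{L^2}$ and $\|S\|_{\dot H^1}=2^{-\frac{1}{2}}\|\omega\|_{\dot H^1}$ (whence the constants $\frac{1}{\sqrt 2}$ and $2^{\frac{5}{6}}$ there). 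A finished proof must land on a specific constant; had you done the bookkeeping you deferred, you would both have completed the argument and caught that the displayed statement is inconsistent with its uses elsewhere in the paper.
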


We have now introduced all the spaces that we will use in this paper with the exception of $L^{q,\infty}$. We will leave the definition of this space until section \ref{SumSpaceSection}, where it fits more naturally, in order to keep the presentation of the sum space inclusion in Theorem \ref{MixedLpSumIntro} self-contained.

\section{Middle eigenvalue regularity criterion} \label{EigenSection}

In this section we will consider regularity criteria for $\lambda_2^+$ in sum spaces of scale invariant spaces.
We will begin by recalling an estimate for enstrophy growth proven by the author in \cite{MillerStrain}, variants of which were also considered in \cites{NeustupaPenel1,ChaeStrain}.

\begin{proposition} \label{EnstrophyGrowth}
Suppose $u\in C\left(\left[0,T_{max}\right);
\dot{H}^1_{df}\right)$ is a smooth solution of the Navier--Stokes equation. Then for all $0<t<T_{max}$
\begin{align}
    \partial_t\|S(\cdot,t)\|_{L^2}^2
    &=
    -2 \nu\|S\|_{\dot{H}^1}^2- 4\int_{\mathbb{R}^3} \det(S)\\
    &\leq
    -2 \nu \|S\|_{\dot{H}^1}^2+ 
    2\int_{\mathbb{R}^3} \lambda_2^+ |S|^2,
\end{align}
where $\lambda_1(x,t)\leq \lambda_2(x,t)\leq \lambda_3(x,t)$
are the eigenvalues of $S(x,t)$,
and $\lambda_2^+ =\max\left(0,\lambda_2\right)$.
\end{proposition}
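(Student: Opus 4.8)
The plan is to differentiate the squared $L^2$ norm of the strain directly, using the strain evolution equation stated above, identify which terms vanish, and reduce the surviving nonlinearity to the determinant of $S$. First I would write $\partial_t \|S(\cdot,t)\|_{L^2}^2 = 2\int_{\mathbb{R}^3} S_{ij}\,\partial_t S_{ij}$ and substitute for $\partial_t S$. The individual terms are then dispatched as follows. The viscous term integrates by parts to $2\int_{\mathbb{R}^3} S_{ij}\,\nu\Delta S_{ij} = -2\nu\|\nabla S\|_{L^2}^2 = -2\nu\|S\|_{\dot{H}^1}^2$, which is the dissipation appearing in both lines of the statement. The transport term contributes $-\int_{\mathbb{R}^3} u_k\partial_k|S|^2 = \int_{\mathbb{R}^3}(\divr u)|S|^2 = 0$ by incompressibility. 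The term $\tfrac14|\omega|^2 I_3$ contributes $\tfrac12\int_{\mathbb{R}^3}|\omega|^2\,\tr(S)$, which vanishes since $\tr(S)=\divr u=0$. For the pressure term I would use $\partial_i S_{ij} = \tfrac12\Delta u_j$ (again by $\divr u = 0$) and integrate by parts twice to reach $-\int_{\mathbb{R}^3}\Delta(\divr u)\,p = 0$. What remains are the genuinely nonlinear contributions of $S^2$ and $\omega\otimes\omega$, namely $-2\int_{\mathbb{R}^3}\tr(S^3) - \tfrac12\int_{\mathbb{R}^3}\omega^{T}S\,\omega$.

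The key step is to convert these two terms into the determinant. Since $\tr(S)=0$, the eigenvalues satisfy $\lambda_1+\lambda_2+\lambda_3=0$, and the elementary identity $a+b+c=0 \Rightarrow a^3+b^3+c^3=3abc$ gives the pointwise relation $\tr(S^3)=3\det(S)$. The harder ingredient is the integral identity relating vortex stretching to the strain determinant, $\int_{\mathbb{R}^3}\omega^{T}S\,\omega = -\tfrac43\int_{\mathbb{R}^3}\tr(S^3) = -4\int_{\mathbb{R}^3}\det(S)$, which I expect to be the main obstacle. This is where the divergence-free structure is used in an essential, genuinely non-pointwise way: it follows from integration by parts after expressing $\omega$ and $S$ through $\nabla u$, and is the identity established by the author in \cite{MillerStrain}. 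Combining these, $-2\int_{\mathbb{R}^3}\tr(S^3)-\tfrac12\int_{\mathbb{R}^3}\omega^{T}S\,\omega = -6\int_{\mathbb{R}^3}\det(S)+2\int_{\mathbb{R}^3}\det(S) = -4\int_{\mathbb{R}^3}\det(S)$, which yields the equality in the first line.

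For the inequality in the second line I would prove the pointwise bound $-4\det(S)\leq 2\lambda_2^+|S|^2$ and integrate. Writing $\det(S)=\lambda_1\lambda_2\lambda_3$ and $|S|^2=\lambda_1^2+\lambda_2^2+\lambda_3^2$, with $\lambda_1\leq\lambda_2\leq\lambda_3$ and $\lambda_1+\lambda_2+\lambda_3=0$ (so that $\lambda_1\leq 0\leq\lambda_3$), I split into two cases. If $\lambda_2\leq 0$ then $\lambda_2^+=0$, and since $\lambda_2+\lambda_3=-\lambda_1\geq 0$ the product $-4\lambda_1\lambda_2\lambda_3 = 4(\lambda_2+\lambda_3)\lambda_2\lambda_3\leq 0$, so the bound holds. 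If $\lambda_2>0$ then $\lambda_2^+=\lambda_2$; dividing the target inequality by $2\lambda_2>0$ and substituting $\lambda_1=-(\lambda_2+\lambda_3)$ reduces the difference of the two sides to $2\lambda_2^2\geq 0$. Integrating the pointwise inequality over $\mathbb{R}^3$ then completes the proof. The only delicate point throughout is the vortex-stretching identity in the second paragraph; every other step is integration by parts exploiting $\divr u=\tr(S)=0$ together with an elementary eigenvalue computation.
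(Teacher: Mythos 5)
Your proof is correct. Note that the paper does not actually prove Proposition \ref{EnstrophyGrowth} --- it recalls it from \cite{MillerStrain} --- and your argument reconstructs that proof faithfully: the term-by-term computation from the strain evolution equation (with the transport, trace, and pressure terms killed by $\divr u=\tr(S)=0$), the integral identity $\int_{\mathbb{R}^3}\omega^{T}S\,\omega=-4\int_{\mathbb{R}^3}\det(S)$, which you rightly flag as the only genuinely non-pointwise ingredient (it follows from $\int_{\mathbb{R}^3}\det(\nabla u)=0$ for divergence-free decaying fields together with $\tr\left((\nabla u)^3\right)=\tr(S^3)+\tfrac{3}{4}\,\omega^{T}S\,\omega$ and $\tr(S^3)=3\det(S)$), and the pointwise bound $-4\det(S)\leq 2\lambda_2^+|S|^2$, whose two cases you verify correctly.
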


Using this estimate for enstrophy growth, we will prove Theorem \ref{StrainSumSpaceIntro}, which is restated here for the reader's convenience.

\begin{theorem} \label{StrainSumSpace}
Suppose $u\in C\left(\left[0,T_{max}\right);
\dot{H}^1_{df}\right)$ is a smooth solution of the Navier--Stokes equation. 
Let $\frac{3}{2}<q<+\infty, \frac{2}{p}+\frac{3}{q}=2,$ 
and let 
$\lambda_2^+ =f+g.$ 
Then for all $0<T<T_{max}$
\begin{equation} \label{EnstrophyBoundLambda}
    \|S(\cdot,T)\|_{L^2}^2 \leq 
    \left\| S^0 \right\|_{L^2}^2
    \exp\left(\frac{C_p}{\nu^{p-1}} \int_0^T 
    \|f(\cdot,t)\|_{L^q}^p \diff t
    +2\int_0^T \|g(\cdot,t)\|_{L^\infty} \diff t
    \right),
\end{equation}
where $C_p$ depends only on $p.$
In particular if $T_{max}<+\infty,$ then
\begin{equation}
    \frac{C_p}{\nu^{p-1}} \int_0^{T_{max}} 
    \|f(\cdot,t)\|_{L^q}^p \diff t
    +2\int_0^{T_{max}} \|g(\cdot,t)\|_{L^\infty} \diff t
    =+\infty.
\end{equation}
\end{theorem}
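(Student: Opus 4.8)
The plan is to run a differential inequality for the enstrophy $\|S(\cdot,t)\|_{L^2}^2$ and then close it with Gr\"onwall. I would start from the enstrophy growth estimate of Proposition \ref{EnstrophyGrowth}, which gives
\begin{equation}
    \partial_t \|S(\cdot,t)\|_{L^2}^2 \leq -2\nu\|S\|_{\dot H^1}^2 + 2\int_{\mathbb{R}^3}\lambda_2^+ |S|^2,
\end{equation}
and substitute the decomposition $\lambda_2^+ = f+g$ to split the last integral into an $f$-part and a $g$-part. The $g$-part is the easy one: bounding $g$ pointwise by its supremum yields $2\int g|S|^2 \leq 2\|g(\cdot,t)\|_{L^\infty}\|S\|_{L^2}^2$, which is already in the multiplicative form needed for Gr\"onwall and accounts for the $2\int_0^T \|g\|_{L^\infty}$ term in the exponent.

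The substantive work is the $f$-part. First I would apply H\"older with exponents $q$ and $\frac{q}{q-1}$ to get $\int f|S|^2 \leq \|f\|_{L^q}\,\|S\|_{L^r}^2$ with $r=\frac{2q}{q-1}$, noting that $q\in(\tfrac32,\infty)$ forces $r\in(2,6)$ so that $\|S\|_{L^r}$ can be interpolated between $L^2$ and $L^6$. The Gagliardo--Nirenberg interpolation $\|S\|_{L^r}\leq \|S\|_{L^2}^{1-\theta}\|S\|_{L^6}^{\theta}$ with $\theta=\frac{3}{2q}$, combined with the Sobolev inequality $\|S\|_{L^6}\leq \frac{1}{\sqrt3}(\frac{2}{\pi})^{2/3}\|S\|_{\dot H^1}$ from Theorem \ref{Sobolev}, gives a bound of the form $\|f\|_{L^q}\,\|S\|_{L^2}^{2/p}\,\|S\|_{\dot H^1}^{2-2/p}$. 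The crucial observation is that the scaling relation $\frac{2}{p}+\frac{3}{q}=2$ is exactly what produces the exponent $2\theta=\frac{3}{q}=2-\frac{2}{p}$ on the $\dot H^1$ factor, so that after applying Young's inequality with conjugate exponents $\frac{p}{p-1}$ and $p$, the $\dot H^1$ power becomes precisely $2$ and can be absorbed into the dissipation $-2\nu\|S\|_{\dot H^1}^2$. What remains is a term $\frac{C_p}{\nu^{p-1}}\|f\|_{L^q}^p\|S\|_{L^2}^2$, where the negative power of $\nu$ arises automatically from the Young parameter chosen to match the dissipation coefficient.

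Putting the two parts together, the dissipation cancels and I obtain
\begin{equation}
    \partial_t\|S(\cdot,t)\|_{L^2}^2 \leq \left(\frac{C_p}{\nu^{p-1}}\|f(\cdot,t)\|_{L^q}^p + 2\|g(\cdot,t)\|_{L^\infty}\right)\|S(\cdot,t)\|_{L^2}^2,
\end{equation}
and Gr\"onwall's inequality delivers the stated bound \eqref{EnstrophyBoundLambda}. For the final assertion, I would argue by contradiction: if $T_{max}<+\infty$ but the exponent integral stayed finite up to $T_{max}$, the Gr\"onwall bound would keep $\|S(\cdot,t)\|_{L^2}^2=\mathcal{E}(t)$ bounded, contradicting the enstrophy blowup forced by \eqref{BlowupEnstrophyLB}. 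I expect the main obstacle to be purely the bookkeeping in the interpolation/Young step---verifying that the power of $\|S\|_{\dot H^1}$ collapses to exactly $2$ under the scaling constraint and tracking the constants so they assemble into a $p$-dependent constant times $\nu^{-(p-1)}$; everything else (the $g$ bound, the Gr\"onwall closure, and the blowup contradiction) is routine.
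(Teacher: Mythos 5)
Your proposal is correct and follows essentially the same route as the paper's proof: the enstrophy inequality from Proposition \ref{EnstrophyGrowth}, H\"older with exponents $q$ and its conjugate (plus $1,\infty$ for the $g$ term), interpolation of $\|S\|_{L^{2r}}$ between $L^2$ and $L^6$ with $\theta=\frac{3}{2q}$, Sobolev embedding, Young's inequality with exponents $p$ and $\frac{p}{p-1}$ to absorb the $\|S\|_{\dot H^1}^2$ term into the dissipation, and Gr\"onwall. Your bookkeeping checks out --- the scaling relation does give $\frac{3}{q}\cdot\frac{p}{p-1}=2$ and the $\nu^{-(p-1)}$ factor --- and the concluding blowup argument via $\lim_{T\to T_{max}}\|S(\cdot,T)\|_{L^2}^2=+\infty$ matches the paper's.
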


\begin{proof}
We know that if $T_{max}<+\infty,$ then 
\begin{equation}
    \lim_{T \to T_{max}}\|S(\cdot,T)\|_{L^2}^2=+\infty,    
\end{equation}
so it suffices to prove the bound \eqref{EnstrophyBoundLambda}.
We can see from the inequality in 
Proposition \ref{EnstrophyGrowth}, that for all $0<t<T_{max},$
\begin{align}
    \partial_t \|S(\cdot,t)\|_{L^2}^2 
    &\leq
    -2 \nu \|S\|_{\dot{H}^1}^2 
    +2\int_{\mathbb{R}^3} \lambda_2^+ |S|^2 \\
    &=
    -2 \nu \|S\|_{\dot{H}^1}^2 
    +2\int_{\mathbb{R}^3} (f+g) |S|^2 \\
    &\leq
    -2 \nu \|S\|_{\dot{H}^1}^2+
    2\|f\|_{L^q}\left\||S|^2\right\|_{L^{r}}
    +2\|g\|_{L^\infty}\left\||S|^2\right\|_{L^1}\\
    &= \label{StepA}
    -2 \nu \|S\|_{\dot{H}^1}^2+
    2\|f\|_{L^q}\|S\|_{L^{2r}}^2+2\|g\|_{L^\infty}\|S\|_{L^2}^2,
\end{align}
where $\frac{1}{r}+\frac{1}{q}=1,$ and we have applied H\"older's inequality with exponents $q,r$ and $1,\infty.$

Next we observe that $\frac{3}{2}<q<\infty,$ and so
$1<r<3,$ and consequently $2<2r<6.$ 
Let $\rho=\frac{3}{2q}.$
We can see that $0<\rho<1,$ and
\begin{align}
    (1-\rho) \frac{1}{2}+ \rho \frac{1}{6}
    &=
    \frac{1}{2}-\frac{\rho}{3}\\
    &=
    \frac{1}{2}-\frac{1}{2q}\\
    &=
    \frac{1}{2}-\frac{1}{2}\left(1-\frac{1}{r}\right)\\
    &=
    \frac{1}{2r}.
\end{align}
Therefore, we can interpolate between $L^2$ and $L^6$ and find that
\begin{equation}
    \|S\|_{L^{2r}}\leq 
    \|S\|_{L^2}^{1-\frac{3}{2q}} \|S\|_{L^6}^{\frac{3}{2q}}.
\end{equation}
Plugging back into \eqref{StepA}, and applying the Sobolev inequality (Theorem \ref{Sobolev}), we find that
\begin{align}
    \partial_t \|S(\cdot,t)\|_{L^2}^2 
    &\leq
    -2 \nu \|S\|_{\dot{H}^1}^2+
    2\|f\|_{L^q}\|S\|_{L^2}^{2-\frac{3}{q}} 
    \|S\|_{L^6}^{\frac{3}{q}}
    +2\|g\|_{L^\infty}\|S\|_{L^2}^2 \\
    &\leq
    -2 \nu \|S\|_{\dot{H}^1}^2+
    C \|f\|_{L^q}\|S\|_{L^2}^{2-\frac{3}{q}} 
    \|S\|_{\dot{H}^1}^{\frac{3}{q}}
    +2\|g\|_{L^\infty}\|S\|_{L^2}^2 \\
    &= 
    -2 \nu \|S\|_{\dot{H}^1}^2+
    C \|f\|_{L^q}\|S\|_{L^2}^{\frac{2}{p}} 
    \|S\|_{\dot{H}^1}^{\frac{3}{q}}
    +2\|g\|_{L^\infty}\|S\|_{L^2}^2 
\end{align}
Let $b=\frac{2q}{3}.$ Clearly $1<b<+\infty,$ and recalling that 
$\frac{2}{p}+\frac{3}{q}=2,$ we can see that
\begin{align}
    \frac{1}{p}+\frac{1}{b}
    &=
    \frac{1}{p}+\frac{3}{2q}\\
    &=
    1.
\end{align}
Applying Young's inequality with exponents $p,b$ we find
\begin{equation}
    \frac{C}{\nu} \|f\|_{L^q}\|S\|_{L^2}^{\frac{2}{p}} 
    \|S\|_{\dot{H}^1}^{\frac{3}{q}}
    \leq
    \frac{C_p}{\nu^p}\|f\|_{L^q}^p\|S\|_{L^2}^2
    +2\|S\|_{\dot{H}^1}^2.
\end{equation}
This immediately implies that
\begin{equation}
    -2 \nu \|S\|_{\dot{H}^1}^2+
    C \|f\|_{L^q}\|S\|_{L^2}^\frac{2}{p} 
    \|S\|_{\dot{H}^1}^{\frac{3}{q}}
    \leq
    \frac{C_p}{\nu^{p-1}}\|f\|_{L^q}^p \|S\|_{L^2}^2.
\end{equation}
Note that while we are not keeping track of the value of the constant $C_p$, it is nevertheless independent of $\nu$, and is determined solely in terms of $p$ and the value of the sharp Sobolev constant.
From this we may conclude that
\begin{equation}
    \partial_t \|S(\cdot,t)\|_{L^2}^2
    \leq
    \left(\frac{C_p}{\nu^{p-1}}\|f\|_{L^q}^p
    +2 \|g\|_{L^\infty}\right) \|S\|_{L^2}^2.
\end{equation}
Applying Gr\"onwall's inequality we find that for all $0<T<T_{max},$
\begin{equation} 
    \|S(\cdot,T)\|_{L^2}^2 \leq 
    \left\| S^0 \right\|_{L^2}^2
    \exp\left(\int_0^T 
    \left(\frac{C_p}{\nu^{p-1}}\|f(\cdot,t)\|_{L^q}^p
    +2 \|g(\cdot,t)\|_{L^\infty}\right) \diff t
    \right),
\end{equation}
and this completes the proof.
\end{proof}

It is clear in general that the $L^\infty_x$ norm is effective for controlling values of $\lambda_2^+(x,t)$ in the large regions of $\mathbb{R}^3$ where it is relatively small. Therefore, Theorem \ref{StrainSumSpace}, implies that the $L^p_T L^q_x,$ norm must be large in the small regions of space where $\lambda_2^+(x,t)$ takes large values, and that in this sense $\lambda_2^+$ must exhibit concentrated blowup in the critical norms $L^p_T L^q_x$, for all $\frac{3}{2}<q<+\infty.$ 
We will prove a corollary that quantifies this phenomenon, requiring the concentration of the $L^p_T L^q_x$ norm at large values in the range, for all 
$\frac{3}{2}<q<+\infty, \frac{2}{p}+\frac{3}{q}=2$.

\begin{corollary} \label{StrainCor}
Suppose $u\in C\left(\left[0,T_{max}\right);
\dot{H}^1_{df}\right)$ is a smooth solution of the Navier--Stokes equation, and
suppose $h\in L^1\left(\left[0,T_{max}\right);
\mathbb{R}^+\right)$. 
Let $\frac{3}{2}<q<+\infty,
\frac{2}{p}+\frac{3}{q}=2,$ and let
\begin{equation}
    f(x,t)= \begin{cases}
        \lambda_2^+(x,t), &\text{if } \lambda_2^+(x,t)> h(t)
        \\
        0, &\text{if } \lambda_2^+(x,t)\leq h(t) 
            \end{cases}.
\end{equation}
Then for all $0<T<T_{max}$
\begin{equation} \label{EnstrophyBoundLambdaCor}
    \|S(\cdot,T)\|_{L^2}^2 \leq 
    \left\| S^0 \right\|_{L^2}^2
    \exp\left(\frac{C_p}{\nu^{p-1}} \int_0^T 
    \|f(\cdot,t)\|_{L^q}^p \diff t
    +2\int_0^T  h(t) \diff t
    \right),
\end{equation}
where $C_p$ depends only on $p.$
In particular if $T_{max}<+\infty,$ then
\begin{equation}
    \int_0^{T_{max}} 
    \|f(\cdot,t)\|_{L^q}^p \diff t
    =+\infty.
\end{equation}
\end{corollary}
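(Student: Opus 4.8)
The plan is to recognize this statement as an immediate specialization of Theorem \ref{StrainSumSpace} to a particular decomposition of $\lambda_2^+$. First I would introduce the complementary piece
\begin{equation}
    g(x,t)= \begin{cases}
        \lambda_2^+(x,t), &\text{if } \lambda_2^+(x,t)\leq h(t) \\
        0, &\text{if } \lambda_2^+(x,t)> h(t)
            \end{cases},
\end{equation}
so that $\lambda_2^+=f+g$ holds pointwise. The key observation is that this decomposition forces $g$ to be bounded by $h$ in the spatial sup norm: wherever $g(x,t)$ is nonzero we have $0\leq g(x,t)=\lambda_2^+(x,t)\leq h(t)$, and hence $\|g(\cdot,t)\|_{L^\infty}\leq h(t)$ for almost every $t$.

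With this decomposition in hand, I would simply invoke the enstrophy bound of Theorem \ref{StrainSumSpace} for this choice of $f$ and $g$, which produces an exponential factor involving $\frac{C_p}{\nu^{p-1}}\int_0^T \|f(\cdot,t)\|_{L^q}^p \diff t$ together with $2\int_0^T \|g(\cdot,t)\|_{L^\infty}\diff t$. Substituting the pointwise bound $\|g(\cdot,t)\|_{L^\infty}\leq h(t)$ into the latter integral then yields \eqref{EnstrophyBoundLambdaCor} directly, since monotonicity of the exponential preserves the inequality.

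For the blowup conclusion I would argue by contradiction with the enstrophy blowup established from \eqref{BlowupEnstrophyLB}: if $T_{max}<+\infty$, then $\|S(\cdot,T)\|_{L^2}^2\to+\infty$ as $T\to T_{max}$. Passing to this limit in \eqref{EnstrophyBoundLambdaCor} forces the exponent to diverge. Since $h\in L^1\left(\left[0,T_{max}\right)\right)$ by hypothesis, the term $2\int_0^{T_{max}}h(t)\diff t$ is finite, so the divergence must be carried entirely by the first term, giving $\int_0^{T_{max}}\|f(\cdot,t)\|_{L^q}^p\diff t=+\infty$. There is essentially no analytic obstacle here, as all the real work is contained in Theorem \ref{StrainSumSpace}; the only point deserving attention is that the $L^1$ integrability of $h$ in time is exactly what decouples the two contributions and pins the finite-time singularity entirely on the concentrated large-value part $f$.
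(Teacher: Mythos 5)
Your proposal is correct and follows essentially the same route as the paper: the same complementary decomposition $\lambda_2^+=f+g$, the same pointwise bound $\|g(\cdot,t)\|_{L^\infty}\leq h(t)$, an application of Theorem \ref{StrainSumSpace}, and the same use of the $L^1$ integrability of $h$ to isolate the divergence in the $f$ term. No gaps.
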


\begin{proof}
We will begin by letting
\begin{equation}
    g(x,t)= \begin{cases}
        \lambda_2^+(x,t), 
        &\text{if } \lambda_2^+(x,t)\leq h(t) \\
        0, &\text{if } \lambda_2^+(x,t)> h(t)
            \end{cases}
\end{equation}
We can see immediately that for all $0<t<T_{max},$
\begin{equation}
   \|g(\cdot,t)\|_{L^\infty}\leq h(t), 
\end{equation}
and that
\begin{equation}
    \lambda_2^+=f+g.
\end{equation}
Therefore we can apply Theorem \ref{StrainSumSpace} and find that
\begin{align}
    \|S(\cdot,T)\|_{L^2}^2 
    &\leq 
    \left\| S^0 \right\|_{L^2}^2
    \exp\left(\frac{C_p}{\nu^{p-1}} \int_0^T 
    \|f(\cdot,t)\|_{L^q}^p \diff t
    +2\int_0^T \|g(\cdot,t)\|_{L^\infty} \diff t
    \right)\\
    &\leq
    \left\| S^0 \right\|_{L^2}^2
    \exp\left(\frac{C_p}{\nu^{p-1}} \int_0^T 
    \|f(\cdot,t)\|_{L^q}^p \diff t
    +2\int_0^T  h(t) \diff t
    \right).
\end{align}

Next we will note, as in Theorem \ref{StrainSumSpace}, that if $T_{max}<+\infty,$ then
\begin{equation}
    \lim_{T\to T_{max}}\|S(\cdot,T)\|_{L^2}^2=+\infty.
\end{equation}
Therefore we can conclude that if $T_{max}<+\infty,$ then
\begin{equation}
    \frac{C_p}{\nu^{p-1}} \int_0^{T_{max}} 
    \|f(\cdot,t)\|_{L^q}^p \diff t
    +2\int_0^{T_{max}}  h(t) \diff t
    =+\infty.
\end{equation}
However, we know by hypothesis that
\begin{equation}
    \int_0^{T_{max}}  h(t) \diff t<+\infty,
\end{equation}
so we may conclude that
\begin{equation}
    \int_0^{T_{max}} 
    \|f(\cdot,t)\|_{L^q}^p \diff t
    =+\infty.
\end{equation}
This completes the proof.
\end{proof}

This concentrated blowup in $L^p_T L^q_x,$ for $\frac{2}{p}+\frac{3}{q}=2,$ with $q>\frac{3}{2},$ arbitrarily close to $\frac{3}{2}$, heavily suggests that if $T_{max}<+\infty,$
then
\begin{equation}
    \limsup_{t \to T_{max}}
    \left\|\lambda_2^+(\cdot,t)\right\|_{L^\frac{3}{2}}
    =+\infty,
\end{equation}
and so Conjecture \ref{EigenEndpointConjecture} holds,
although to establish this result is still beyond the scope of the methods used in this paper.
In \cite{MillerStrain}, the author showed that 
If $T_{max}<+\infty,$ then
\begin{equation}
    \limsup_{t \to T_{max}}
    \left\|\lambda_2^+(\cdot,t)\right\|_{L^\frac{3}{2}}
    \geq 3\left(\frac{\pi}{2}\right)^\frac{4}{3} \nu.
\end{equation}
We will prove the sum space analogue of this result now, which is also the endpoint case of Corollary \ref{StrainCor}.

\begin{theorem} \label{EigenEndpoint}
Suppose $u\in C\left(\left[0,T_{max}\right);
\dot{H}^1_{df}\right)$ is a smooth solution of the Navier--Stokes equation, and
suppose $h\in L^1\left(\left[0,T_{max}\right);
\mathbb{R}^+\right)$. 
Let
\begin{equation}
    f(x,t)= \begin{cases}
        \lambda_2^+(x,t), &\text{if } \lambda_2^+(x,t)> h(t)
        \\
        0, &\text{if } \lambda_2^+(x,t)\leq h(t)
            \end{cases}.
\end{equation}
If $T_{max}<+\infty,$ then
\begin{equation}
    \limsup_{t \to T_{max}}
    \left\|f(\cdot,t)\right\|_{L^\frac{3}{2}}
    \geq 3\left(\frac{\pi}{2}\right)^\frac{4}{3} \nu.
\end{equation}
\end{theorem}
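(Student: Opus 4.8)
The plan is to argue by contradiction and reduce everything to a Gr\"onwall estimate for the enstrophy $\|S(\cdot,t)\|_{L^2}^2$, exactly as in Theorem~\ref{StrainSumSpace}, but taking advantage of the fact that at the endpoint $q=\frac{3}{2}$ the sharp Sobolev constant lets us compute the precise threshold. Suppose, for contradiction, that
\begin{equation}
    \limsup_{t\to T_{max}}\|f(\cdot,t)\|_{L^{3/2}}<3\left(\frac{\pi}{2}\right)^{4/3}\nu.
\end{equation}
Then there exist $t_0<T_{max}$ and a constant $M<3\left(\frac{\pi}{2}\right)^{4/3}\nu$ such that $\|f(\cdot,t)\|_{L^{3/2}}\leq M$ for all $t\in(t_0,T_{max})$. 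As in the proof of Corollary~\ref{StrainCor}, I would define the complementary piece $g=\lambda_2^+-f$, which satisfies $0\leq g(x,t)\leq h(t)$ pointwise, so that $\lambda_2^+=f+g$ with $\int_{\mathbb{R}^3} g\,|S|^2\leq h(t)\|S\|_{L^2}^2$.

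The core of the argument is the single estimate on the nonlinear term. Starting from the inequality in Proposition~\ref{EnstrophyGrowth} and splitting $\lambda_2^+=f+g$, I would bound $\int_{\mathbb{R}^3} f\,|S|^2$ by H\"older's inequality with the conjugate exponents $q=\frac{3}{2}$ and $r=3$, giving $\int_{\mathbb{R}^3} f\,|S|^2\leq \|f\|_{L^{3/2}}\|S\|_{L^6}^2$, and then apply the sharp Sobolev inequality (Theorem~\ref{Sobolev}) to $|S|$, using $\bigl|\nabla|S|\bigr|\leq|\nabla S|$ pointwise, to obtain $\|S\|_{L^6}^2\leq \frac{1}{3}\left(\frac{2}{\pi}\right)^{4/3}\|S\|_{\dot H^1}^2$. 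Collecting terms yields
\begin{equation}
    \partial_t\|S(\cdot,t)\|_{L^2}^2\leq\left(-2\nu+\frac{2}{3}\left(\frac{2}{\pi}\right)^{4/3}\|f\|_{L^{3/2}}\right)\|S\|_{\dot H^1}^2+2h(t)\|S\|_{L^2}^2 .
\end{equation}
The threshold is then forced by a direct computation: the coefficient of $\|S\|_{\dot H^1}^2$ vanishes precisely when $\|f\|_{L^{3/2}}=3\left(\frac{\pi}{2}\right)^{4/3}\nu$, since $\left(\frac{2}{\pi}\right)^{4/3}\left(\frac{\pi}{2}\right)^{4/3}=1$. By the choice of $M$, for $t\in(t_0,T_{max})$ this coefficient is strictly negative, so the dissipation term is nonpositive and may be discarded.

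What remains is $\partial_t\|S(\cdot,t)\|_{L^2}^2\leq 2h(t)\|S(\cdot,t)\|_{L^2}^2$ on $(t_0,T_{max})$. Applying Gr\"onwall's inequality and using $h\in L^1\left([0,T_{max})\right)$ gives the uniform bound
\begin{equation}
    \|S(\cdot,t)\|_{L^2}^2\leq\|S(\cdot,t_0)\|_{L^2}^2\exp\left(2\int_0^{T_{max}}h(\tau)\diff\tau\right)<+\infty
\end{equation}
for all $t\in(t_0,T_{max})$, which contradicts the fact that $T_{max}<+\infty$ forces $\|S(\cdot,t)\|_{L^2}^2\to+\infty$ as $t\to T_{max}$. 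I expect the only genuinely delicate point to be the endpoint degeneracy of the interpolation used in Theorem~\ref{StrainSumSpace}: at $q=\frac{3}{2}$ the interpolation exponent $\frac{3}{2q}$ equals $1$, so the $L^6$ norm of $S$ appears at full power and Young's inequality no longer lets us absorb a controlled fraction of the dissipation. Consequently one cannot absorb anything, and instead must make the entire coefficient of $\|S\|_{\dot H^1}^2$ negative through the sharp Sobolev constant and the strict smallness $M<3\left(\frac{\pi}{2}\right)^{4/3}\nu$ — which is exactly why the conclusion is a sharp lower bound on the $\limsup$ rather than an unconditional blowup statement.
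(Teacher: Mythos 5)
Your proposal is correct and follows essentially the same route as the paper's proof: contradiction, the decomposition $\lambda_2^+=f+g$ with $\|g(\cdot,t)\|_{L^\infty}\leq h(t)$, H\"older with exponents $\frac{3}{2},3$, the sharp Sobolev constant to make the coefficient of $\|S\|_{\dot H^1}^2$ strictly negative near $T_{max}$, and Gr\"onwall with $h\in L^1$ to contradict enstrophy blowup. Your closing observation about why the endpoint forces a sharp lower bound on the $\limsup$ rather than a divergence statement is exactly the right diagnosis.
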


\begin{proof}
Suppose toward contradiction that $T_{max}<+\infty$ and
\begin{equation}
    \limsup_{t \to T_{max}}
    \left\|f(\cdot,t)\right\|_{L^\frac{3}{2}}
    < 3\left(\frac{\pi}{2}\right)^\frac{4}{3} \nu.
\end{equation}
This implies that there exists $\epsilon>0,$ such that for all $T_{max}-\epsilon<t<T_{max},$
\begin{equation}
    \left\|f(\cdot,t)\right\|_{L^\frac{3}{2}}
    < 3\left(\frac{\pi}{2}\right)^\frac{4}{3} \nu.
\end{equation}
We will again let
\begin{equation}
    g(x,t)= \begin{cases}
        \lambda_2^+(x,t), 
        &\text{if } \lambda_2^+(x,t)\leq h(t) \\
        0, &\text{if } \lambda_2^+(x,t)> h(t)
            \end{cases}
\end{equation}
We can see immediately that for all $0<t<T_{max},$
\begin{equation}
   \|g(\cdot,t)\|_{L^\infty}\leq h(t), 
\end{equation}
and that
\begin{equation}
    \lambda_2^+=f+g.
\end{equation}

Using the estimate for enstrophy growth in Proposition \ref{EnstrophyGrowth}, H\"older's inequality, and Sobolev's inequality, we find that for all $T_{max}-\delta<t<T_{max}$
\begin{align}
    \partial_t\|S(\cdot,t)\|_{L^2}^2
    &\leq
    -2 \nu \|S\|_{\dot{H}^1}^2+ 
    2\int_{\mathbb{R}^3} \lambda_2^+ |S|^2\\
    &=
    -2 \nu \|S\|_{\dot{H}^1}^2
    +2\int_{\mathbb{R}^3} f |S|^2
    +2\int_{\mathbb{R}^3} g |S|^2 \\
    &\leq 
    -2 \nu \|S\|_{\dot{H}^1}^2
    +2\|f\|_{L^\frac{3}{2}}\|S\|_{L^6}^2
    +2\|g\|_{L^\infty}\|S\|_{L^2}^2 \\
    &\leq
    -2 \nu \|S\|_{\dot{H}^1}^2
    +\frac{2}{3 \left(\frac{\pi}{2}\right)^\frac{4}{3}}
    \|f\|_{L^\frac{3}{2}}\|S\|_{\dot{H}^1}^2
    +2h \|S\|_{L^2}^2 \\
    &=
    2 \nu \|S\|_{\dot{H}^1}^2 \left(
    -1+ \frac{\|f\|_{L^\frac{3}{2}}}
    {3 \left(\frac{\pi}{2} \right)^\frac{4}{3}\nu}\right)
    +2 h \|S\|_{L^2}^2.
\end{align}
Recall that by hypothesis for all 
$T_{max}-\epsilon<t<T_{max},$
\begin{equation}
    \frac{\|f\|_{L^\frac{3}{2}}}
    {3 \left(\frac{\pi}{2} \right)^\frac{4}{3}\nu}<1,
\end{equation}
so we can conclude that for all
$T_{max}-\epsilon<t<T_{max},$
\begin{equation}
    \partial_t\|S(\cdot,t)\|_{L^2}^2
    \leq
    2 h \|S\|_{L^2}^2.
\end{equation}

Applying Gr\"onwall's inequality we can see that for all
$T_{max}-\epsilon<T<T_{max}$
\begin{equation}
    \|S(\cdot,T)\|_{L^2}^2
    \leq
    \left\|S(\cdot,T_{max}-\epsilon) \right\|_{L^2}^2
    \exp\left( 2 \int_{T_{max}-\epsilon}^T 
    h(t) \diff t\right).
\end{equation}
Recalling that 
$h\in L^1 \left([0,T_{max};\mathbb{R}^+\right),$ we can see that
\begin{align}
    \limsup_{T \to T_{max}} \|S(\cdot,T)\|_{L^2}^2
    &\leq
    \left\|S(\cdot,T_{max}-\epsilon) \right\|_{L^2}^2
    \exp\left(2 \int_{T_{max}-\epsilon}^{T_{max}} 
    h(t) \diff t\right)\\
    &<
    +\infty.
\end{align}
This contradicts our assumption that $T_{max}<+\infty,$
so this completes the proof.
\end{proof}

Before we move on to considering the structure of the mixed Lebesgue sum spaces, we will show that if $T_{max}<+\infty,$
and 
\begin{equation}
    \limsup_{t\to T_{max}}
    \|f(\cdot,t)\|_{L^\frac{3}{2}}
    <+\infty,
\end{equation}
then we can conclude that
$f(\cdot,t) \rightharpoonup 0$ weakly in $L^\frac{3}{2}$ 
as $t \to T_{max},$
where $f$ is $\lambda_2^+$ restricted to the points in its domain where it takes large values.
We will need to establish the following proposition before we can prove this statement.

\begin{proposition} \label{LqProp}
Suppose $f\in L^q\left(\mathbb{R}^3\right), 1<q<+\infty$ and there exists $R>0,$ such that for all $x\in \mathbb{R}^3,$ 
$f(x)=0$ or $|f(x)|> R.$ Then for all $1\leq p<q,$
\begin{equation}
    \|f\|_{L^p}^p \leq \frac{1}{R^{q-p}}\|f\|_{L^q}^q.
\end{equation}
\end{proposition}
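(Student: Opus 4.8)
The plan is to exploit the key structural hypothesis that $f$ takes only the value $0$ or values strictly exceeding $R$ in magnitude. On the set where $f=0$ the integrand $|f|^p$ vanishes, so the entire $L^p$ norm is concentrated on the set $E=\{x : |f(x)|>R\}$. The strategy is to bound $|f|^p$ pointwise by $|f|^q$ times a compensating factor that accounts for the gap between exponents, using the lower bound $|f|>R$ on $E$ to convert the missing $|f|^{q-p}$ into a constant.

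The key steps, in order, are as follows. First I would write
\begin{equation}
    \|f\|_{L^p}^p = \int_{\mathbb{R}^3} |f(x)|^p \diff x
    = \int_E |f(x)|^p \diff x,
\end{equation}
using that $f=0$ off of $E$. Second, on $E$ I would factor the integrand as $|f(x)|^p = |f(x)|^q \cdot |f(x)|^{p-q}$, noting that $p-q<0$, so that $|f(x)|^{p-q} = 1/|f(x)|^{q-p}$. Third, since $|f(x)|>R$ on $E$ and $q-p>0$, we have $|f(x)|^{q-p} > R^{q-p}$, hence $|f(x)|^{p-q} < R^{p-q} = 1/R^{q-p}$. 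Fourth, I would substitute this pointwise bound to obtain
\begin{equation}
    \int_E |f(x)|^p \diff x
    \leq \frac{1}{R^{q-p}} \int_E |f(x)|^q \diff x
    \leq \frac{1}{R^{q-p}} \int_{\mathbb{R}^3} |f(x)|^q \diff x
    = \frac{1}{R^{q-p}}\|f\|_{L^q}^q,
\end{equation}
which is precisely the claimed inequality.

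This argument is essentially a one-line pointwise estimate once the domain is restricted to $E$, so I do not anticipate any serious obstacle. The only point requiring a small amount of care is the sign bookkeeping on the exponent: because $p<q$ the exponent $p-q$ is negative, so dividing by $|f|^{q-p}$ (rather than multiplying) is what produces the favorable constant, and the strict lower bound $|f|>R$ is used precisely to control this reciprocal. One should also note that the hypothesis $f\in L^q$ guarantees the right-hand side is finite, so the bound is meaningful, and that the measurability of $E$ follows from the measurability of $f$. No interpolation or functional-analytic machinery is needed; the result is a direct consequence of the two-valued structure of $|f|$.
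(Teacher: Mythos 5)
Your proof is correct and follows essentially the same route as the paper's: both arguments reduce to the pointwise observation that $|f(x)|^{q-p}\geq R^{q-p}$ wherever $f(x)\neq 0$, which converts $|f|^p$ into $|f|^q/R^{q-p}$ on the support of $f$. The only cosmetic difference is that you restrict the integral to the set $E=\{|f|>R\}$ first, whereas the paper multiplies the integrand by the factor $|f|^{q-p}/R^{q-p}\geq 1$ directly; these are the same estimate.
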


\begin{proof}
We know that for all $x\in \mathbb{R}^3,$ 
$f(x)=0$ or $|f(x)|> R,$ and so for all $x\in\mathbb{R}^3,$
such that $f(x)\neq 0,$
\begin{equation}
    \frac{|f(x)|^{q-p}}{R^{q-p}}\geq 1.
\end{equation}
Therefore we may compute that
\begin{align}
    \|f\|_{L^p}^p
    &=
    \int_{\mathbb{R}^3}|f(x)|^p \diff x\\
    &\leq
    \int_{\mathbb{R}^3}|f(x)|^p 
    \frac{|f(x)|^{q-p}}{R^{q-p}} \diff x\\
    &= \frac{1}{R^{q-p}}
    \int_{\mathbb{R}^3}|f(x)|^q \diff x \\
    &=
    \frac{1}{R^{q-p}} \|f\|_{L^q}^q,
    \end{align}
and this completes the proof.
\end{proof}

\begin{theorem} \label{WeakConvergeZero}
Suppose $u\in C\left(\left[0,T_{max}\right);
H^1_{df}\right)$ is a smooth solution of the Navier--Stokes equation with $T_{max}<+\infty$.
Suppose $h\in L^1\left(\left[0,T_{max}\right);
\mathbb{R}^+\right)$, with
\begin{equation}
    \lim_{t\to T_{max}}h(t)=+\infty
\end{equation}
Let
\begin{equation}
    f(x,t)= \begin{cases}
        \lambda_2^+(x,t), &\text{if } \lambda_2^+(x,t)> h(t)
        \\
        0, &\text{if } \lambda_2^+(x,t)\leq h(t)
            \end{cases}.
\end{equation}
If 
\begin{equation}
    \limsup_{t \to T_{max}}
    \|f(\cdot,t)\|_{L^\frac{3}{2}}<+\infty,
\end{equation}
then for all $1\leq q<\frac{3}{2},$
\begin{equation}
    \lim_{t \to T_{max}}\|f(\cdot,t)\|_{L^q}=0,
\end{equation}
and 
$f(\cdot,t) \rightharpoonup 0$ weakly in $L^\frac{3}{2}$ 
as $t \to T_{max}.$
\end{theorem}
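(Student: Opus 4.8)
The plan is to reduce both conclusions to the single inequality furnished by Proposition \ref{LqProp}, which is designed precisely for functions that either vanish or exceed a fixed threshold. At each time $t$ near enough to $T_{max}$ that $h(t)>0$, the function $f(\cdot,t)$ takes only the value $0$ or values with $|f(\cdot,t)|=\lambda_2^+>h(t)$, so it satisfies the hypotheses of Proposition \ref{LqProp} with $R=h(t)$. First I would apply that proposition with its upper exponent equal to $\tfrac32$ and its lower exponent equal to $q$, for any $1\le q<\tfrac32$, to obtain
\[
    \|f(\cdot,t)\|_{L^q}^q \le \frac{1}{h(t)^{\frac32-q}}\,\|f(\cdot,t)\|_{L^{3/2}}^{\frac32}.
\]
By the standing hypothesis there is a constant $M$ and an $\epsilon>0$ with $\|f(\cdot,t)\|_{L^{3/2}}\le M$ for $T_{max}-\epsilon<t<T_{max}$; since $\tfrac32-q>0$ and $h(t)\to+\infty$, the prefactor tends to $0$, forcing $\|f(\cdot,t)\|_{L^q}\to0$. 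This establishes the strong convergence for every $1\le q<\tfrac32$, and in particular, taking $q=1$, that $\|f(\cdot,t)\|_{L^1}\to0$.

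For the weak convergence in $L^{3/2}$, I would combine the uniform $L^{3/2}$ bound with this strong $L^1$ decay through a density argument. Since $\left(L^{3/2}\right)^*=L^3$, it suffices to show $\int_{\mathbb{R}^3} f(\cdot,t)\,\phi \to 0$ for every $\phi\in L^3$. Fixing $\phi$ and $\delta>0$, I would split $\phi=\phi_1+\phi_2$ with $\phi_1$ bounded and compactly supported and $\|\phi_2\|_{L^3}<\delta$. H\"older's inequality bounds the tail contribution by $\|f(\cdot,t)\|_{L^{3/2}}\|\phi_2\|_{L^3}\le M\delta$ uniformly in $t$, while the main contribution obeys $\left|\int f(\cdot,t)\phi_1\right|\le\|f(\cdot,t)\|_{L^1}\|\phi_1\|_{L^\infty}\to0$ by the $L^1$ decay above. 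Sending $t\to T_{max}$ and then $\delta\to0$ gives the claim.

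Once Proposition \ref{LqProp} is available, the computation is essentially immediate; the one place that requires genuine care---and the main conceptual point---is the weak convergence step. Boundedness in $L^{3/2}$ alone does not force weak convergence to $0$; what makes it work is the interplay of the uniform $L^{3/2}$ bound with the strong decay in a strictly lower $L^q$ norm, exploited against a dense class of test functions. The only minor technical checks are that $h(t)>0$ for $t$ near $T_{max}$ (immediate from $h(t)\to+\infty$) and that $f(\cdot,t)\in L^1$, which itself follows from Proposition \ref{LqProp} with lower exponent $1$.
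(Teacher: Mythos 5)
Your proposal is correct and follows essentially the same route as the paper: Proposition \ref{LqProp} with $R=h(t)$ gives the decay of $\|f(\cdot,t)\|_{L^q}$ for $q<\tfrac32$, and the weak convergence then follows by testing against a dense subclass of $L^3$ and using the uniform $L^{3/2}$ bound on the remainder. The only cosmetic difference is the dense class: you approximate by bounded compactly supported functions and exploit the $L^1$ decay, whereas the paper approximates by $L^4$ functions and exploits the $L^{4/3}$ decay; both are the same argument.
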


\begin{proof}
We know by hypothesis that
\begin{equation}
    \limsup_{t \to T_{max}}
    \|f(\cdot,t)\|_{L^\frac{3}{2}}<+\infty,
\end{equation}
so let 
\begin{align}
    M
    &=
    \sup_{0\leq t\leq T_{max}}
    \|f(\cdot,t)\|_{L^\frac{3}{2}}  \\
    &<
    +\infty
\end{align}
Applying Proposition \ref{LqProp}, we can immediately see that for all $1\leq q<\frac{3}{2},$
\begin{align}
    \|f(\cdot,t)\|_{L^q}^q
    &\leq
    \frac{1}{h(t)^{\frac{3}{2}-q}} 
    \|f(\cdot,t)\|_{L^\frac{3}{2}}^\frac{3}{2} \\
    &\leq
    \frac{M^\frac{3}{2}}{h(t)^{\frac{3}{2}-q}},
\end{align}
and therefore
\begin{equation}
    \|f(\cdot,t)\|_{L^q}
    \leq
    \frac{M^\frac{3}{2q}}
    {h(t)^{\frac{3}{2q}-1}}
\end{equation}
We also know that 
\begin{equation}
    \lim_{t \to T_{max}} h(t)=+\infty,
\end{equation}
so we can compute that for all $1\leq q<\frac{3}{2}$
\begin{equation}
    \lim_{t \to T_{max}}\|f(\cdot,t)\|_{L^q}=0,
\end{equation}
and this concludes the first part of the proof.

We will now use this fact to show that
$f(\cdot,t) \rightharpoonup 0$ weakly in $L^\frac{3}{2}$ 
as $t \to T_{max}.$
$L^\frac{3}{2}$ is the dual space of $L^3,$ so we will show that for all $g\in L^3,$
\begin{equation}
    \lim_{t\to T_{max}} \left<f(\cdot,t),g\right>=0.
\end{equation}
Fix $g\in L^3.$ $L^4$ is dense in $L^3$ so for all $\epsilon>0,$ there exists $w\in L^4$ such that
\begin{equation}
    \|g-w\|_{L^3}\leq \frac{\epsilon}{M}.
\end{equation}
Applying H\"older's inequality we find that
\begin{align}
    \left|\left<f(\cdot,t),g\right>\right|
    &\leq
    \left|\left<f(\cdot,t),w\right>\right|
    +\left|\left<f(\cdot,t),g-w\right>\right|\\
    &\leq
    \|f(\cdot,t)\|_{L^\frac{4}{3}}\|w\|_{L^4}
    +\|f(\cdot,t)\|_{L^\frac{3}{2}}\|g-w\|_{L^3}\\
    &\leq
    \|f(\cdot,t)\|_{L^\frac{4}{3}}\|w\|_{L^4} +\epsilon.
\end{align}
However, we have already shown that 
\begin{equation}
    \lim_{t\to T_{max}}\|f(\cdot,t)\|_{L^\frac{4}{3}}=0,
\end{equation}
so we can conclude that
\begin{equation}
    \limsup_{t\to T_{max}}
    \left|\left<f(\cdot,t),g\right>\right|
    <\epsilon.
\end{equation}
$\epsilon>0$ was arbitrary, so taking the 
limit $\epsilon\to 0,$ we find that
\begin{equation}
    \lim_{t\to T_{max}}
    \left<f(\cdot,t),g\right>=0.
\end{equation}
Therefore, for all $g\in L^3,$
\begin{equation}
    \lim_{t\to T_{max}}
    \left<f(\cdot,t),g\right>=0,
\end{equation}
and we can conclude that 
$f(\cdot,t) \rightharpoonup 0$ weakly in $L^\frac{3}{2}$ 
as $t \to T_{max}.$
This completes the proof.
\end{proof}

When Escauriaza, Seregin, and \v{S}ver\'ak \cite{ESS} proved
that if $T_{max}<+\infty,$ then 
\begin{equation}
    \limsup_{t \to T_{max}}
    \|u(\cdot,t)\|_{L^3}
    =+\infty,
\end{equation}
their proof relied on showing that 
if $T_{max}<+\infty,$ and
\begin{equation}
    \limsup_{t \to T_{max}}
    \|u(\cdot,t)\|_{L^3}
    <+\infty,
\end{equation}
then $u(\cdot,t) \rightharpoonup 0$ weakly in $L^3$
as $t \to T_{max},$ and using a backward uniqueness result to derive a contradiction.
There is no comparable backward uniqueness result for $\lambda_2^+,$ and certainly not for $f$---which is $\lambda_2^+$ restricted to points in its domain where it takes large values---so Theorem \ref{WeakConvergeZero} does not imply the endpoint regularity criterion with $\lambda_2^+\in L^\infty_T L^\frac{3}{2}_x$, but it does suggest a possible direction towards establishing the endpoint case and showing that Conjecture \ref{EigenEndpointConjecture} holds.

\section{Mixed Lebesgue Sum Spaces} \label{SumSpaceSection}

In this section, we will discuss the structure of mixed Lebesgue sum spaces in some scaling class, proving the inclusion in Theorem \ref{MixedLpSumIntro}.
We will begin by introducing the distribution function, 
which describes how the range of a function $f$ is distributed by considering the Lebesgue measure of the set of $\left\{x\in \mathbb{R}^3: |f(x)|>\alpha\right\}$.

\begin{definition}
Let $f:\mathbb{R}^3 \to \mathbb{R}^3,$ be a Lebesgue measurable function.
Then for all $\alpha\geq 0,$ let
\begin{equation}
    \lambda_f(\alpha)
    =
    \left|\left\{x\in\mathbb{R}^3:
    |f(x)|>\alpha\right\}\right|.
\end{equation}
\end{definition}

We will note that two functions with the same distribution function will have the same $L^p$ norm for all 
$1\leq p \leq +\infty,$ and for 
$1\leq p<+\infty,$ we have the following explicit formula.

\begin{proposition} \label{DistributionLp}
For all $1\leq p<+\infty$, and for all $f\in L^p\left(\mathbb{R}^3\right)$
\begin{align}
    \|f\|_{L^p}^p
    &=
    \int_{\mathbb{R}^3}|f(x)|^p \diff x \\
    &=
    p\int_0^\infty \alpha^{p-1}\lambda_f(\alpha) \diff\alpha.
\end{align}
\end{proposition}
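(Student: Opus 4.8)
The plan is to prove the layer-cake formula
\begin{equation*}
    \int_{\mathbb{R}^3}|f(x)|^p \diff x
    =
    p\int_0^\infty \alpha^{p-1}\lambda_f(\alpha) \diff\alpha
\end{equation*}
by writing $|f(x)|^p$ as an integral of its derivative and then exchanging the order of integration via Tonelli's theorem. Concretely, for each fixed $x$ I would use the fundamental theorem of calculus to write $|f(x)|^p = \int_0^{|f(x)|} p\alpha^{p-1}\diff\alpha$, and then recognize the upper limit $|f(x)|$ as the condition $\alpha < |f(x)|$, i.e. the indicator $\mathbf{1}_{\{\alpha < |f(x)|\}}$.

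The key steps, in order, would be as follows. First I would write
\begin{equation*}
    |f(x)|^p = \int_0^\infty p\alpha^{p-1}\mathbf{1}_{\{\alpha < |f(x)|\}}\diff\alpha,
\end{equation*}
which is just the fundamental theorem of calculus restated with an indicator. Second, I would integrate both sides over $x\in\mathbb{R}^3$ and apply Tonelli's theorem to swap the $x$ and $\alpha$ integrals; this is justified because the integrand $p\alpha^{p-1}\mathbf{1}_{\{\alpha<|f(x)|\}}$ is nonnegative and measurable on $\mathbb{R}^3\times[0,\infty)$. Third, I would compute the inner integral: for fixed $\alpha$,
\begin{equation*}
    \int_{\mathbb{R}^3}\mathbf{1}_{\{\alpha < |f(x)|\}}\diff x
    =
    \left|\left\{x\in\mathbb{R}^3 : |f(x)|>\alpha\right\}\right|
    =
    \lambda_f(\alpha),
\end{equation*}
which is exactly the definition of the distribution function. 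Substituting this back yields the claimed identity.

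The only technical subtlety is making sure the set $\{\alpha < |f(x)|\}$ used inside the indicator matches the strict inequality $|f(x)|>\alpha$ in the definition of $\lambda_f$; since the two differ only on the measure-zero (in $\alpha$) set where $\alpha = |f(x)|$, this causes no difference in the integral, but I would note it for cleanliness. I expect the measurability and the application of Tonelli to be the only points requiring any care, and these are entirely routine because the integrand is nonnegative. In fact there is no genuine obstacle here: the statement is the standard layer-cake representation, and the proof is a one-line interchange of integrals justified by Tonelli. The hardest part is purely bookkeeping—keeping the strict versus non-strict inequalities straight—rather than anything analytically deep.
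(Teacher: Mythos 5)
Your proof is correct. The paper does not actually prove Proposition \ref{DistributionLp}; it simply cites Chapter 6.4 of \cite{Folland}, and your argument---write $|f(x)|^p=\int_0^\infty p\alpha^{p-1}\mathbf{1}_{\{\alpha<|f(x)|\}}\diff\alpha$ by the fundamental theorem of calculus and interchange the integrals by Tonelli---is precisely the standard layer-cake proof given in that reference. One small remark: the ``subtlety'' you flag is not even a subtlety, since the condition $\alpha<|f(x)|$ in your indicator is literally the same set as $\{x:|f(x)|>\alpha\}$ appearing in the definition of $\lambda_f$, so no strict-versus-non-strict discrepancy ever arises.
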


For a proof of this result, and a good overview of the related literature, see Chapter 6.4 in \cite{Folland}.
We will also use the distribution function to define the endpoint Lorentz space
$L^{q,\infty}$, also known as weak $L^q$.

\begin{definition}
For all $1\leq q< \infty,$ 
and for all Lebesgue measurable functions
$f:\mathbb{R}^3 \to \mathbb{R}^3,$, 
\begin{equation}
    \|f\|_{L^{q,\infty}}
    =
    \left(\sup_{\alpha>0} \alpha^q \lambda_f(\alpha)
    \right)^{\frac{1}{q}}.
\end{equation}
Furthermore define $L^{q,\infty}\left(\mathbb{R}^3\right)$
by
\begin{equation}
    L^{q,\infty}\left(\mathbb{R}^3\right)
    =
    \left\{f:\mathbb{R}^3 \to \mathbb{R}^3,
    \text{Lebesgue measurable}:
    \|f\|_{L^{q,\infty}}<+\infty \right\}
\end{equation}
Note that $\|\cdot\|_{L^{q,\infty}}$ is a norm for $1<q<+\infty$, but is only a quasi-norm for $q=1,$ as the triangle inequality fails to hold.
\end{definition}

\begin{definition}
For all $1\leq p,q<+\infty,$
we will define
\begin{equation}
    L^p_T L^{q,\infty}_x
    =
    L^p\left([0,T);
    L^{q,\infty}\left(\mathbb{R}^3\right)\right),
\end{equation}
and for all $f\in L^p_T L^{q,\infty}_x$,
\begin{equation}
    \|f\|_{L^p_T L^{q,\infty}_x}
    =
    \left( \int_0^T
    \|f(\cdot,t)\|_{L^{q,\infty}}^p \diff t
    \right)^\frac{1}{p}.
\end{equation}
We will note again that this is a norm for $1<q<+\infty,$ and a quasi-norm for $q=1.$
\end{definition}

We will now prove a proposition that will be essential in the decomposition necessary to prove our sum space result.

\begin{proposition} \label{WeakLqProp}
Suppose $f\in L^{q,\infty}\left(\mathbb{R}^3\right), 1<q<+\infty$ and there exists $R>0,$ such that for all $x\in \mathbb{R}^3,$ 
$f(x)=0$ or $|f(x)|>R.$ Then for all $p<q,$
\begin{equation}
    \|f\|_{L^p}^p \leq\frac{q}{q-p} \frac{1}{R^{q-p}}
    \|f\|_{L^{q,\infty}}^q.
\end{equation}
\end{proposition}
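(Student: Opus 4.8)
The plan is to prove the claimed bound by using the distribution-function formula from Proposition \ref{DistributionLp} together with the defining property of the weak $L^q$ norm. The key structural fact to exploit is that $f$ is supported on the set where $|f(x)| > R$, which means its distribution function $\lambda_f(\alpha)$ vanishes identically for $0 \leq \alpha \leq R$. This will let me truncate the integral representation of $\|f\|_{L^p}^p$ and control the tail using the weak-$L^q$ bound $\lambda_f(\alpha) \leq \|f\|_{L^{q,\infty}}^q / \alpha^q$.

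First I would write, using Proposition \ref{DistributionLp},
\begin{equation}
    \|f\|_{L^p}^p = p\int_0^\infty \alpha^{p-1} \lambda_f(\alpha) \diff\alpha.
\end{equation}
Since $f(x) = 0$ or $|f(x)| > R$ for every $x$, the set $\{x : |f(x)| > \alpha\}$ is unchanged for all $0 \leq \alpha \leq R$, and more importantly $\lambda_f(\alpha) = \lambda_f(R)$ for $\alpha \leq R$... but the cleaner observation is simply that for $\alpha < R$ we still have contributions, so instead I would split at $R$ and note that on $[0,R]$ we can bound $\lambda_f(\alpha) = \lambda_f(R) \le \|f\|_{L^{q,\infty}}^q R^{-q}$ while on $[R,\infty)$ we use $\lambda_f(\alpha) \le \|f\|_{L^{q,\infty}}^q \alpha^{-q}$ directly. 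Carrying this out, the tail integral $p\int_R^\infty \alpha^{p-1-q}\diff\alpha$ converges precisely because $p < q$, giving $\frac{p}{q-p} R^{p-q}$, and the low part contributes $R^p \cdot \lambda_f(R) \le R^{p-q}\|f\|_{L^{q,\infty}}^q$. Summing the two pieces yields the factor $\left(1 + \frac{p}{q-p}\right) = \frac{q}{q-p}$, which is exactly the constant claimed.

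The main thing to get right will be the bookkeeping at the threshold $\alpha = R$: one must use the fact that $\lambda_f$ is constant (equal to $\lambda_f(R^+)$) on $[0,R)$ because $f$ never takes values in $(0,R]$, rather than naively bounding $\lambda_f(\alpha)$ by $\alpha^{-q}\|f\|_{L^{q,\infty}}^q$ on all of $[0,\infty)$ — that naive bound would make the integral diverge at $\alpha = 0$ when $p \le q$. This is the one place where the hypothesis $f(x)=0$ or $|f(x)|>R$ is genuinely used, and it is the analogue of the role played by the pointwise lower bound in Proposition \ref{LqProp}. Everything else is the elementary integration of a power of $\alpha$, and the condition $p < q$ is what guarantees convergence of the tail, so no further obstacle is expected; the result then follows by combining the two estimates and simplifying the constant.
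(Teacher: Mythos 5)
Your proposal is correct and follows essentially the same route as the paper's own proof: both use the layer-cake formula of Proposition \ref{DistributionLp}, observe that $\lambda_f(\alpha)=\lambda_f(R)$ on $[0,R]$ because $|f|$ takes no values in $(0,R]$, bound the low part by $R^{p}\lambda_f(R)\leq R^{p-q}\|f\|_{L^{q,\infty}}^{q}$ and the tail by $\frac{p}{q-p}R^{p-q}\|f\|_{L^{q,\infty}}^{q}$, and sum to get the constant $\frac{q}{q-p}$. No gaps.
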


\begin{proof}
First we will recall that
\begin{equation}
    \|f\|_{L^{q,\infty}}^q=
    \sup_{\alpha>0} \alpha^q \lambda_f(\alpha),
\end{equation}
and so therefore, for all $\alpha>0,$
\begin{equation} \label{DistrubtionControl}
    \lambda_f(\alpha)\leq \|f\|_{L^{q,\infty}}^q \alpha^{-q} 
\end{equation}
Recalling that by hypothesis
if $|f(x)|>0,$ then $|f(x)|>R,$ 
we may conclude that for all $0\leq \alpha \leq R,$
\begin{equation}
    \left\{x\in\mathbb{R}^3: |f(x)|>\alpha\right\}
    =
    \left\{x\in\mathbb{R}^3: |f(x)|>R\right\}.
\end{equation}
This implies that for all $0\leq\alpha\leq R,$
\begin{equation}
    \lambda_f(\alpha)=\lambda_f(R).
\end{equation}

We will now use Proposition \ref{DistributionLp} to estimate the $L^p$ norm, breaking up the integral into the intervals $[0,R]$ and $(R,+\infty)$:
\begin{align}
    \|f\|_{L^p}^p
    &=
    p\int_0^\infty \alpha^{p-1}\lambda_f(\alpha) \diff\alpha \\
    &=
    p\int_0^R \alpha^{p-1}\lambda_f(\alpha) \diff\alpha
    +p\int_R^\infty \alpha^{p-1}\lambda_f(\alpha) \diff\alpha \\
    &=
    \lambda_f(R)\int_0^R p \alpha^{p-1} \diff\alpha
    +p\int_R^\infty \alpha^{p-1}\lambda_f(\alpha) \diff\alpha \\
    &=
    \lambda_f(R)R^p
    +p\int_R^\infty \alpha^{p-1}\lambda_f(\alpha) \diff\alpha.
\end{align}
Applying the estimate \eqref{DistrubtionControl}, we can compute that
\begin{align}
    \|f\|_{L^p}^p
    &\leq
    \|f\|_{L^{q,\infty}}^q R^{p-q}
    +p \|f\|_{L^{q,\infty}}^q 
    \int_R^\infty \alpha^{p-q-1} \diff\alpha \\
    &=
    \|f\|_{L^{q,\infty}}^q R^{p-q}
    +\frac{p}{p-q} \|f\|_{L^{q,\infty}}^q
    \alpha^{p-q} \bigg|_{\alpha=R}^\infty \\
    &=
    \|f\|_{L^{q,\infty}}^q R^{p-q}
    +\frac{p}{q-p} \|f\|_{L^{q,\infty}}^q R^{p-q} \\
    &=
    \left(1+\frac{p}{q-p}\right)R^{p-q}
    \|f\|_{L^{q,\infty}}^q \\
    &=
    \frac{q}{q-p} \frac{1}{R^{q-p}} \|f\|_{L^{q,\infty}}^q.
\end{align}
This completes the proof.
\end{proof}

We will now prove that the continuous embedding
    $L^{q,\infty}\left(\mathbb{R}^3\right)\hookrightarrow
    L^p\left(\mathbb{R}^3\right)
    +L^\infty\left(\mathbb{R}^3\right)$
holds, along the associated inequality.
While this is a relatively standard result, we include the proof for the sake of completeness and clarity, because the proof of Theorem \ref{MixedLpSumIntro} is an adaptation of the proof of this embedding.

\begin{theorem} \label{LpSum}
Suppose $1\leq p <q <+\infty,$
then
\begin{equation}
    L^{q,\infty}\left(\mathbb{R}^3\right)\hookrightarrow
    L^p\left(\mathbb{R}^3\right)
    +L^\infty\left(\mathbb{R}^3\right),
\end{equation}
and in particular for all 
$f\in L^{q,\infty}\left(\mathbb{R}^3\right)$
\begin{equation}
    \|f\|_{L^p+L^\infty}
    \leq C_{p,q}
    \|f\|_{L^{q,\infty}},
\end{equation}
where
\begin{equation}
    C_{p,q}=\inf_{k>0} \left( k
    +\left(\frac{q}{q-p}\right)^\frac{1}{p}
    k^{1-\frac{q}{p}}\right).
\end{equation}
\end{theorem}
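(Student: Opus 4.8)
The plan is to adapt the classical Lebesgue-space truncation argument, letting Proposition \ref{WeakLqProp} carry the analytic weight. Fix $f\in L^{q,\infty}\left(\mathbb{R}^3\right)$ and a threshold $R>0$. I would split the domain according to the size of $|f|$, defining $g$ to equal $f$ on the set $\left\{|f|>R\right\}$ and to vanish elsewhere, and $h$ to equal $f$ on $\left\{|f|\leq R\right\}$ and to vanish elsewhere, so that $f=g+h$. This is exactly the decomposition for which the hypothesis of Proposition \ref{WeakLqProp} was designed, since $g$ is either zero or strictly larger than $R$ in modulus at every point.

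The bounded part is immediate: by construction $|h(x)|\leq R$ for every $x$, so $\|h\|_{L^\infty}\leq R$. For the large-values part, I would first observe that the distribution function is nonincreasing and that $\lambda_g(\alpha)\leq\lambda_f(\alpha)$ for every $\alpha>0$, which gives $\|g\|_{L^{q,\infty}}\leq\|f\|_{L^{q,\infty}}$. Applying Proposition \ref{WeakLqProp} with exponent $p<q$ then yields
\begin{equation}
    \|g\|_{L^p}\leq\left(\frac{q}{q-p}\right)^{\frac{1}{p}}
    R^{-\frac{q-p}{p}}\|f\|_{L^{q,\infty}}^{\frac{q}{p}}.
\end{equation}
By the definition of the sum-space norm as an infimum over decompositions, this particular choice of $g$ and $h$ already forces
\begin{equation}
    \|f\|_{L^p+L^\infty}\leq\|g\|_{L^p}+\|h\|_{L^\infty}
    \leq\left(\frac{q}{q-p}\right)^{\frac{1}{p}}
    R^{-\frac{q-p}{p}}\|f\|_{L^{q,\infty}}^{\frac{q}{p}}+R.
\end{equation}

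The last step is to optimize the threshold. The clean substitution is $R=k\,\|f\|_{L^{q,\infty}}$ with $k>0$ (the case $\|f\|_{L^{q,\infty}}=0$ being trivial), which factors a single power of $\|f\|_{L^{q,\infty}}$ out of both terms; using $-\frac{q-p}{p}=1-\frac{q}{p}$ this leaves
\begin{equation}
    \|f\|_{L^p+L^\infty}\leq\left(k+\left(\frac{q}{q-p}\right)^{\frac{1}{p}}
    k^{1-\frac{q}{p}}\right)\|f\|_{L^{q,\infty}},
\end{equation}
and taking the infimum over $k>0$ produces exactly the constant $C_{p,q}$. No step here is a genuine obstacle, because Proposition \ref{WeakLqProp} does the hard work; the only points requiring care are verifying $\|g\|_{L^{q,\infty}}\leq\|f\|_{L^{q,\infty}}$ from monotonicity of the distribution function, and tracking the exponent arithmetic in the scaling substitution so that $\|f\|_{L^{q,\infty}}$ ends up to the first power in both terms. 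One may also note in passing that $C_{p,q}$ is finite and positive, since $1-\frac{q}{p}<0$ makes the second term blow up as $k\to0^+$ while the first blows up as $k\to+\infty$, so the infimum is attained at a finite positive $k$.
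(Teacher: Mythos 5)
Your proposal is correct and follows essentially the same route as the paper's proof: the same cutoff decomposition at level $R$, the same application of Proposition \ref{WeakLqProp} to the large-values piece, and the same substitution $R=k\,\|f\|_{L^{q,\infty}}$ followed by the infimum over $k>0$. Your explicit verification that $\|g\|_{L^{q,\infty}}\leq\|f\|_{L^{q,\infty}}$ via monotonicity of the distribution function is a small point the paper leaves implicit, and is a welcome addition.
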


\begin{proof}
For all $R>0,$ let
\begin{equation}
    g_R(x,t)=
    \begin{cases}
    f(x,t), &\text{if } |f(x,t)| > R \\
    0, &\text{if } |f(x,t)| \leq R
    \end{cases}
\end{equation}
and
\begin{equation}
    h_R(x,t)=
    \begin{cases}
    f(x,t), &\text{if } |f(x,t)| \leq R \\
    0, &\text{if } |f(x,t)| > R
    \end{cases}.
\end{equation}
We can clearly see that 
\begin{equation}
    \|h_R\|_{L^\infty}\leq R,
\end{equation}
and applying Proposition \ref{WeakLqProp}, we can conclude that
\begin{equation}
    \|g_R\|_{L^p}= \left(\frac{q}{q-p}\right)^\frac{1}{p}
    R^{1-\frac{q}{p}}\|f\|_{L^{q,\infty}}^\frac{q}{p}
\end{equation}
Observing that $f=g_R+h_R,$ we can conclude that for all $R>0$
\begin{align}
    \|f\|_{L^p+L^\infty}
    &\leq 
    \|g_R\|_{L^p}+\|h_R\|_{L^\infty} \\
    &\leq
    R+\left(\frac{q}{q-p}\right)^\frac{1}{p}
    R^{1-\frac{q}{p}}\|f\|_{L^{q,\infty}}^\frac{q}{p}.
\end{align}
Now we will let $R=k \|f\|_{L^{q,\infty}},$
and observe that for all $k>0,$
\begin{equation}
    \|f\|_{L^p+L^\infty} \leq
    \left(k+\left(\frac{q}{q-p}\right)^\frac{1}{p}
    k^{1-\frac{q}{p}}\right)\|f\|_{L^{q,\infty}}.
\end{equation}
Therefore we may conclude that
\begin{equation}
    \|f\|_{L^p+L^\infty} \leq \inf_{k>0}
    \left(k+\left(\frac{q}{q-p}\right)^\frac{1}{p}
    k^{1-\frac{q}{p}}\right)\|f\|_{L^{q,\infty}},
\end{equation}
and this completes the proof.
\end{proof}

We will now prove Theorem \ref{MixedLpSumIntro}, which is restated here for the reader's convenience.

\begin{theorem} \label{MixedLpSum}
Suppose $1\leq k<+\infty, 1 \leq m<+\infty,$ and suppose
\begin{equation}
    \frac{k}{p}+\frac{m}{q}=1,
\end{equation}
and
\begin{equation}
    \frac{k}{p'}+\frac{m}{q'}=1,
\end{equation}
with $m< q' < q< +\infty.$
Then 
\begin{equation}
    L_T^p L_x^{q,\infty} \subset L_T^{p'} L_x^{q'}
    +L_T^k L_x^\infty.
\end{equation}
In particular,
for all $f\in  L_T^p L_x^{q,\infty}$,
we have the explicit decomposition,
$f=g+h$ with 
$g \in L_T^{p'} L_x^{q'}, h \in L_T^k L_x^\infty,$ where
\begin{equation}
    g(x,t)= \begin{cases}
        f(x,t), &\text{if } |f(x,t)|>
        \|f(\cdot,t)\|_{L^{q,\infty}}^{\frac{p}{k}}
        \\
        0, &\text{if } |f(x,t)|\leq
        \|f(\cdot,t)\|_{L^{q,\infty}}^{\frac{p}{k}}
            \end{cases},
\end{equation}
and
\begin{equation}
    h(x,t)= \begin{cases}
        f(x,t), &\text{if } |f(x,t)| \leq
        \|f(\cdot,t)\|_{L^{q,\infty}}^{\frac{p}{k}}
        \\
        0, &\text{if } |f(x,t)| >
        \|f(\cdot,t)\|_{L^{q,\infty}}^{\frac{p}{k}}
            \end{cases},
\end{equation}
and we have the bounds
\begin{equation} \label{BoundA}
    \int_0^T \|g(\cdot,t)\|_{L^{q'}}^{p'} \diff t
    \leq \left(\frac{q}{q-q'}\right)^\frac{p'}{q'} 
    \int_0^T \|f(\cdot,t)\|_{L^{q,\infty}}^p \diff t,
\end{equation}
and
\begin{equation} \label{BoundB}
    \int_0^T \|h(\cdot,t)\|_{L^\infty}^k \diff t
    \leq \int_0^T 
    \|f(\cdot,t)\|_{L^{q,\infty}}^p \diff t.
\end{equation}
\end{theorem}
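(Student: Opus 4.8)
The plan is to mimic the proof of the purely spatial embedding in Theorem \ref{LpSum}, but with a cutoff level that varies in time. At each time $t$ I set the threshold to be $R(t)=\|f(\cdot,t)\|_{L^{q,\infty}}^{p/k}$, which is exactly the level appearing in the statement, and I split $f=g+h$ by sending the large values of $f$ (where $|f|>R(t)$) into $g$ and the small values into $h$. The reason for letting $R$ depend on $t$ in this scale-invariant fashion is that it balances the two pieces against the single quantity $\|f(\cdot,t)\|_{L^{q,\infty}}$, so that after integrating in time both the $L^{p'}_T L^{q'}_x$ norm of $g$ and the $L^k_T L^\infty_x$ norm of $h$ end up controlled by $\int_0^T \|f(\cdot,t)\|_{L^{q,\infty}}^p \diff t$.

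The bound on $h$ is immediate: by construction $\|h(\cdot,t)\|_{L^\infty}\leq R(t)=\|f(\cdot,t)\|_{L^{q,\infty}}^{p/k}$, so raising to the $k$-th power and integrating gives \eqref{BoundB} with no loss. For $g$ the key input is Proposition \ref{WeakLqProp}, applied in the spatial variable at each fixed $t$ with exponents $q'<q$ and threshold $R(t)$. Since $g(\cdot,t)$ is $f(\cdot,t)$ restricted to the set where $|f(\cdot,t)|>R(t)$, its distribution function is dominated pointwise by that of $f(\cdot,t)$, so $\|g(\cdot,t)\|_{L^{q,\infty}}\leq \|f(\cdot,t)\|_{L^{q,\infty}}$; this lets me replace the weak norm of $g$ on the right-hand side by that of $f$. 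Proposition \ref{WeakLqProp} then yields
\begin{equation}
    \|g(\cdot,t)\|_{L^{q'}}^{q'} \leq \frac{q}{q-q'}\, R(t)^{-(q-q')}\, \|f(\cdot,t)\|_{L^{q,\infty}}^q .
\end{equation}

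The crux of the argument is the exponent bookkeeping. After substituting $R(t)=\|f(\cdot,t)\|_{L^{q,\infty}}^{p/k}$ and raising the displayed inequality to the power $p'/q'$, the constant becomes $\left(\frac{q}{q-q'}\right)^{p'/q'}$ as claimed, while $\|f(\cdot,t)\|_{L^{q,\infty}}$ is raised to the power $\left(q-\frac{p(q-q')}{k}\right)\frac{p'}{q'}$. I would then verify that this exponent equals exactly $p$: writing the two scaling relations as $p=\frac{kq}{q-m}$ and $p'=\frac{kq'}{q'-m}$, the claim reduces after a short computation to the identity $qk=p(q-m)$, which is just the first relation. Integrating in $t$ then produces \eqref{BoundA}, and combined with the $h$ bound this gives $f=g+h\in L^{p'}_T L^{q'}_x+L^k_T L^\infty_x$. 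The main obstacle is precisely this exponent accounting — checking that the scale-invariant choice of $R(t)$ is the unique one forcing both resulting powers of $\|f(\cdot,t)\|_{L^{q,\infty}}$ to collapse to $p$; the only other point requiring a word of care is the degenerate time-slices where $\|f(\cdot,t)\|_{L^{q,\infty}}=0$, on which $f(\cdot,t)=0$ almost everywhere and every estimate is trivially satisfied.
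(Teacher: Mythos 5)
Your proposal is correct and follows essentially the same route as the paper: the same time-dependent threshold $R(t)=\|f(\cdot,t)\|_{L^{q,\infty}}^{p/k}$, the same appeal to Proposition \ref{WeakLqProp} at each fixed time, and the same exponent verification that the resulting power of $\|f(\cdot,t)\|_{L^{q,\infty}}$ collapses to $p$ via the two scaling relations. The only additions beyond the paper's argument are explicit mentions of two points it leaves implicit (the domination $\|g(\cdot,t)\|_{L^{q,\infty}}\leq\|f(\cdot,t)\|_{L^{q,\infty}}$ and the degenerate time slices), both of which are fine.
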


\begin{proof}
It is immediately clear that for all $f\in L_T^p L_x^{q,\infty}, f=g+h,$ so it suffices to prove the bounds \eqref{BoundA} and \eqref{BoundB}, 
which in turn establish that
$g \in L_T^{p'} L_x^{q'}$ and $h \in L_T^k L_x^\infty$.
First we will prove the bound \eqref{BoundB}.
It is clear from the definition of $h,$
that for all $0\leq t< T,$
\begin{equation}
    \|h(\cdot,t)\|_{L^\infty} \leq
    \|f(\cdot,t)\|_{L^{q,\infty}}^{\frac{p}{k}},
\end{equation}
and therefore we may conclude that
\begin{equation}
    \int_0^T \|h(\cdot,t)\|_{L^\infty}^k \diff t
    \leq \int_0^T 
    \|f(\cdot,t)\|_{L^{q,\infty}}^p \diff t.
\end{equation}

Now we will prove the bound \eqref{BoundA}.
Letting $R=\|f(\cdot,t)\|_{L^{q,\infty}}^{\frac{p}{k}},$
we can apply Proposition \ref{WeakLqProp} and compute that for all $0\leq t < T,$
\begin{align}
    \|g(\cdot,t)\|_{L^{q'}}^{q'} 
    &\leq 
    \frac{q}{q-q'} R^{q'-q} 
    \|g(\cdot,t)\|_{L^{q,\infty}}^q \\
    &\leq 
    \frac{q}{q-q'} R^{q'-q} 
    \|f(\cdot,t)\|_{L^{q,\infty}}^q \\
    &=
    \frac{q}{q-q'}
    \|f(\cdot,t)\|_{L^{q,\infty}}^{q+\frac{p}{k}(q'-q)}.
\end{align}
Taking both sides to the power of $\frac{p'}{q'},$ 
we find that
\begin{align}
    \|g(\cdot,t)\|_{L^{q'}}^{p'}
    &\leq 
    \left(\frac{q}{q-q'}\right)^\frac{p'}{q'}
    \|f(\cdot,t)\|_{L^{q,\infty}}^{p'
    \left(\frac{q}{q'}+\frac{p}{k}
    \left(1-\frac{q}{q'}\right)\right)}\\
    &\leq \label{ScalingFamilyStep}
    \left(\frac{q}{q-q'}\right)^\frac{p'}{q'}
    \|f(\cdot,t)\|_{L^{q,\infty}}^{p \frac{p'}{p}
    \left(\frac{q}{q'}+\frac{p}{k}
    \left(1-\frac{q}{q'}\right)\right)}.
\end{align}
It remains to show that
\begin{equation}
    \frac{p'}{p}\left(\frac{q}{q'}+\frac{p}{k}
    \left(1-\frac{q}{q'}\right)\right)
    =1,
\end{equation}
which we will do now.
First we will note that
\begin{align}
    \frac{q}{q'}
    &=
    \frac{m/q'}{m/q}\\
    &=
    \frac{1-\frac{k}{p'}}{1-\frac{k}{p}}.
\end{align}
Therefore we can see that
\begin{align}
    1-\frac{q}{q'}
    &=
    1-\frac{1-\frac{k}{p'}}{1-\frac{k}{p}} \\
    &=
    \frac{\left(1-\frac{k}{p}\right)
    -\left(1-\frac{k}{p'}\right)}
    {1-\frac{k}{p}} \\
    &=
    \frac{\frac{k}{p'}-\frac{k}{p}}
    {1-\frac{k}{p}}
\end{align}
Next we can compute that
\begin{equation}
    \frac{p}{k}\left(1-\frac{q}{q'}\right)
    =
    \frac{\frac{p}{p'}-1}{1-\frac{k}{p}}
\end{equation}
And therefore we may conclude that
\begin{align}
    \frac{q}{q'}+\frac{p}{k} \left(1-\frac{q}{q'}\right)
    &=
    \frac{1-\frac{k}{p'}}{1-\frac{k}{p}}
    +\frac{\frac{p}{p'}-1}{1-\frac{k}{p}} \\
    &=
    \frac{\frac{p}{p'}-\frac{k}{p'}}{1-\frac{k}{p}}.
\end{align}
Finally multiplying by $\frac{p'}{p}$ we find that
\begin{align}
    \frac{p'}{p}\left(\frac{q}{q'}+\frac{p}{k}
    \left(1-\frac{q}{q'}\right)\right)
    &=
    \frac{p-k}{p-k}\\
    &=1.
\end{align}
Plugging this back into \eqref{ScalingFamilyStep},
we find that for all $0\leq t<T,$
\begin{equation}
    \|g(\cdot,t)\|_{L^{q'}}^{p'}
    \leq 
    \left(\frac{q}{q-q'}\right)^\frac{p'}{q'}
    \|f(\cdot,t)\|_{L^{q,\infty}}^p,
\end{equation}
and therefore that
\begin{equation}
    \int_0^T \|g(\cdot,t)\|_{L^{q'}}^{p'} \diff t
    \leq \left(\frac{q}{q-q'}\right)^\frac{p'}{q'} 
    \int_0^T \|f(\cdot,t)\|_{L^{q,\infty}}^p \diff t.
\end{equation}
This completes the proof.
\end{proof}

\begin{remark}
We will note that the proof of Theorem \ref{MixedLpSum} is an adaptation of the proof of Theorem \ref{LpSum}, with the correct choice of $R(t)$ for each time. 
When decomposing $f(\cdot,t)$ into functions in $L^{q'}_x$ and $L^\infty_x,$ it is clear that we will control the small values of $f(\cdot,t)$ in $L^\infty_x$ and the large values of $f(\cdot,t)$ in $L^{q'}_x$. The only question is the value of the cutoff function $R(t).$ In the proof of Theorem \ref{MixedLpSum}, we took 
$R(t)=\|f(\cdot,t)\|_{L^{q,\infty}}^{\frac{p}{k}}$.
While this is not the only choice of $R(t)$ available, it is clear that for any choice of $R(t)$ we will have
$R(t) \sim \|f(\cdot,t)\|_{L^{q,\infty}}^{\frac{p}{k}}$,
as the exponent is determined by scaling so any alternate choice will differ at most like a scalar multiple.

We can see that the proof requires that 
$\|f(\cdot,t)\|_{L^{q,\infty}} \in L^p\left([0,T)\right)$,
so it does not appear this condition can be weakened to weak $L^p$ in time, the condition
$\|f(\cdot,t)\|_{L^{q,\infty}} \in
L^{p,\infty}\left([0,T)\right)$.
This is fairly compelling evidence that
Conjecture \ref{MixedLpConjecture} holds and
\begin{equation}
    L_T^{p,\infty} L_x^{q,\infty} \not\subset 
    L_T^{p'} L_x^{q'}+L_T^k L_x^\infty.
\end{equation}
\end{remark}

\begin{corollary} \label{MixedLpCor}
Suppose $1\leq k<+\infty, 1 \leq m<+\infty,$ and suppose
\begin{equation}
    \frac{k}{p}+\frac{m}{q}=1,
\end{equation}
and
\begin{equation}
    \frac{k}{p'}+\frac{m}{q'}=1,
\end{equation}
with $m< q' < q< +\infty.$
Then 
\begin{equation}
    L_T^p L_x^{q,\infty} +L^k_T L^\infty_x 
    \subset 
    L_T^{p'} L_x^{q'} +L_T^k L_x^\infty.
\end{equation}
\end{corollary}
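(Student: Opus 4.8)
The plan is to reduce this directly to Theorem \ref{MixedLpSum}, which already supplies the inclusion $L_T^p L_x^{q,\infty} \subset L_T^{p'} L_x^{q'} + L_T^k L_x^\infty$. The only additional observation required is that adjoining a second copy of $L_T^k L_x^\infty$ to the left-hand side cannot enlarge the resulting sum space, since $L_T^k L_x^\infty$ already appears as a summand on the right and is a vector space closed under addition.

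First I would fix an arbitrary $f \in L_T^p L_x^{q,\infty} + L_T^k L_x^\infty$. By the definition of the sum space, there exist $a \in L_T^p L_x^{q,\infty}$ and $b \in L_T^k L_x^\infty$ with $f = a + b$. Next I would apply Theorem \ref{MixedLpSum} to $a$: because $a \in L_T^p L_x^{q,\infty}$ and the hypotheses on $k, m, p, q, p', q'$ are exactly those assumed in that theorem, we obtain an explicit decomposition $a = g + h$ with $g \in L_T^{p'} L_x^{q'}$ and $h \in L_T^k L_x^\infty$.

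Finally I would regroup, writing $f = g + (h + b)$. The first term lies in $L_T^{p'} L_x^{q'}$, while $h + b \in L_T^k L_x^\infty$, since $L_T^k L_x^\infty$ is a vector space and both $h$ and $b$ belong to it. Hence $f \in L_T^{p'} L_x^{q'} + L_T^k L_x^\infty$, and as $f$ was arbitrary the claimed inclusion follows.

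There is no genuine obstacle here: the entire content is carried by Theorem \ref{MixedLpSum}, and the corollary is essentially a one-line consequence of the fact that the ``error'' space $L_T^k L_x^\infty$ is common to both sides and stable under addition. If a quantitative statement were desired, one could additionally track the norm bounds \eqref{BoundA} and \eqref{BoundB} and use the triangle inequality $\|h + b\|_{L_T^k L_x^\infty} \leq \|h\|_{L_T^k L_x^\infty} + \|b\|_{L_T^k L_x^\infty}$, but for the stated set-theoretic inclusion this refinement is unnecessary.
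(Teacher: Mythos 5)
Your argument is correct and is essentially identical to the paper's own proof: both decompose $f=a+b$ with $a\in L_T^p L_x^{q,\infty}$, apply Theorem \ref{MixedLpSum} to the first summand, and regroup the two $L_T^k L_x^\infty$ pieces using the fact that this space is closed under addition. Nothing is missing.
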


\begin{proof}
Suppose $f \in L_T^p L_x^{q,\infty} +L^k_T L^\infty_x$.
Let $f=g+h, g\in L_T^p L_x^{q,\infty},
h \in L^k_T L^\infty_x$.
Applying Theorem \ref{MixedLpSum}, we can see that
$g \in L^{p'}_T L^{q'}_x +L^k_T L^\infty_x$,
and so $g=\phi +\psi,
\phi \in L^{p'}_T L^{q'}_x, \psi \in L^k_T L^\infty_x$.
From this we may conclude that 
\begin{equation}
    f=\phi+\psi+h,
\end{equation}
with $\phi \in L^{p'}_T L^{q'}_x, 
\psi+h\in L^k_T L^\infty_x$.
Therefore we may conclude that for all 
$f\in L_T^p L_x^{q,\infty} +L^k_T L^\infty_x,$ we have
$f\in L_T^{p'} L_x^{q'} +L_T^k L_x^\infty$,
and this completes the proof.
\end{proof}

\begin{remark}
While the results in this section, particularly Theorem \ref{MixedLpSum}, were proven in terms of the Lebesgue measure on $\mathbb{R}^3,$ we did not use any of the specific properties of the Lebesgue measure on $\mathbb{R}^3$ in the proof, and these results will in fact hold for any Borel measure on a measure space.
\end{remark}

Before moving on to regularity criteria in sum spaces in terms of the velocity, we will use Theorem \ref{MixedLpSum} and Corollary \ref{MixedLpCor} to strengthen the regularity criterion on $\lambda_2^+$ from the space
$L^p_T L^q_x +L^1_T L^\infty_x$ 
to the slightly larger space
$L^p_T L^{q,\infty}_x +L^1_T L^\infty_x$,
proving Corollary \ref{StrainSumSpaceWeakIntro}, which is restated here for the reader's convenience.

\begin{corollary} \label{StrainSumSpaceWeak}
Suppose $u\in C\left(\left[0,T_{max}\right);
\dot{H}^1_{df}\right)$ is a smooth solution of the Navier--Stokes equation. 
Let $\frac{3}{2}<q<+\infty, \frac{2}{p}+\frac{3}{q}=2,$ 
and let 
$\lambda_2^+ =f+g.$ 
Then for all $0<T<T_{max}$
\begin{equation} \label{EnstrophyBoundLambdaWeak}
    \|S(\cdot,T)\|_{L^2}^2 \leq 
    \left\| S^0 \right\|_{L^2}^2
    \exp\left( \Tilde{C}_p
    \int_0^T \|f(\cdot,t)\|_{L^{q,\infty}}^p \diff t
    +2\int_0^T \|g(\cdot,t)\|_{L^\infty} \diff t
    \right),
\end{equation}
where
\begin{equation}
    \Tilde{C}_p=
    \frac{C_{p'}}{\nu^{p'-1}}
    \left(\frac{q}{q-q'}\right)^\frac{p'}{q'} +2,
\end{equation}
with $C_{p'}$ taken as in Theorem \ref{StrainSumSpace},
and $\frac{3}{2}<q'<q, \frac{2}{p'}+\frac{3}{q'}=2.$
In particular if $T_{max}<+\infty,$ then
\begin{equation}
    \Tilde{C}_p \int_0^T 
    \|f(\cdot,t)\|_{L^{q,\infty}}^p \diff t
    +2\int_0^T \|g(\cdot,t)\|_{L^\infty} \diff t
    =+\infty.
\end{equation}
\end{corollary}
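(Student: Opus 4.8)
The plan is to deduce this corollary from the strong-space criterion of Theorem \ref{StrainSumSpace} by absorbing the weak-Lebesgue spatial control into the sum-space decomposition furnished by Theorem \ref{MixedLpSum}. The scaling identity $\frac{2}{p}+\frac{3}{q}=2$ is the same as $\frac{1}{p}+\frac{3}{2q}=1$, so the natural parameter choice in Theorem \ref{MixedLpSum} is $k=1$ and $m=\frac{3}{2}$. With this choice the standing hypothesis $\frac{3}{2}<q'<q$ is precisely the requirement $m<q'<q$, and the companion relation $\frac{1}{p'}+\frac{3}{2q'}=1$ is just $\frac{2}{p'}+\frac{3}{q'}=2$, so the exponents $(p',q')$ satisfy the scaling needed to invoke Theorem \ref{StrainSumSpace}.

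First I would fix $\frac{3}{2}<q'<q$ with $\frac{2}{p'}+\frac{3}{q'}=2$ and apply Theorem \ref{MixedLpSum} to the piece $f\in L^p_T L^{q,\infty}_x$, obtaining the explicit splitting $f=\phi+\psi$ with $\phi\in L^{p'}_T L^{q'}_x$ and $\psi\in L^1_T L^\infty_x$, together with the bounds \eqref{BoundA} and \eqref{BoundB}, namely
\[
    \int_0^T \|\phi(\cdot,t)\|_{L^{q'}}^{p'}\diff t
    \le \left(\frac{q}{q-q'}\right)^{\frac{p'}{q'}}
    \int_0^T\|f(\cdot,t)\|_{L^{q,\infty}}^p\diff t
\]
and
\[
    \int_0^T\|\psi(\cdot,t)\|_{L^\infty}\diff t
    \le \int_0^T\|f(\cdot,t)\|_{L^{q,\infty}}^p\diff t.
\]

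The central observation is then the regrouping $\lambda_2^+=f+g=\phi+(\psi+g)$, in which $\phi$ plays the role of the $L^{p'}_T L^{q'}_x$ piece and $\psi+g$ that of the $L^1_T L^\infty_x$ piece. Feeding this decomposition into Theorem \ref{StrainSumSpace} with the exponents $p',q'$ gives
\[
    \|S(\cdot,T)\|_{L^2}^2 \le \left\|S^0\right\|_{L^2}^2
    \exp\left(\frac{C_{p'}}{\nu^{p'-1}}\int_0^T\|\phi\|_{L^{q'}}^{p'}\diff t
    +2\int_0^T\|\psi+g\|_{L^\infty}\diff t\right).
\]
After using the triangle inequality $\|\psi+g\|_{L^\infty}\le\|\psi\|_{L^\infty}+\|g\|_{L^\infty}$ and inserting the two displayed bounds, the two terms proportional to $\int_0^T\|f\|_{L^{q,\infty}}^p\diff t$, one carrying the factor $\frac{C_{p'}}{\nu^{p'-1}}\bigl(\frac{q}{q-q'}\bigr)^{p'/q'}$ and the other the factor $2$, combine to give exactly the constant $\Tilde{C}_p$, while the remaining term is the unchanged $2\int_0^T\|g\|_{L^\infty}\diff t$; this is precisely \eqref{EnstrophyBoundLambdaWeak}. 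The blowup assertion then follows as in Theorem \ref{StrainSumSpace}: since $\|S(\cdot,T)\|_{L^2}^2\to+\infty$ as $T\to T_{max}$ when $T_{max}<+\infty$, the argument of the exponential must diverge.

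This argument presents no genuine obstacle; it is essentially bookkeeping built on top of Theorems \ref{StrainSumSpace} and \ref{MixedLpSum}. The only points requiring attention are checking that $k=1,\,m=\frac{3}{2}$ makes both scaling constraints of Theorem \ref{MixedLpSum} coincide with the Navier--Stokes scaling for $(p,q)$ and $(p',q')$ simultaneously, and then verifying that the contributions from \eqref{BoundA} and \eqref{BoundB} sum to the stated $\Tilde{C}_p$. One could alternatively invoke Corollary \ref{MixedLpCor} to move directly from $L^p_T L^{q,\infty}_x+L^1_T L^\infty_x$ into $L^{p'}_T L^{q'}_x+L^1_T L^\infty_x$ before applying Theorem \ref{StrainSumSpace}, but that route loses track of the explicit constant and so is less suited to proving the quantitative bound \eqref{EnstrophyBoundLambdaWeak}.
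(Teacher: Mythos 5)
Your proposal is correct and follows essentially the same route as the paper: fix $q'$, split $f$ via the explicit cutoff decomposition of Theorem \ref{MixedLpSum} with $k=1$, $m=\tfrac{3}{2}$, regroup $\lambda_2^+$ as an $L^{p'}_T L^{q'}_x$ piece plus an $L^1_T L^\infty_x$ piece, and feed this into Theorem \ref{StrainSumSpace}, combining the two bounds into $\Tilde{C}_p$. The paper's proof is the same bookkeeping argument, so there is nothing further to add.
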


\begin{proof}
We know that if $T_{max}<+\infty,$ then 
\begin{equation}
    \lim_{T \to T_{max}}\|S(\cdot,T)\|_{L^2}^2=+\infty,    
\end{equation}
so it suffices to prove the bound
\eqref{EnstrophyBoundLambdaWeak}.

We will begin by fixing $\frac{3}{2}<q'<q$ and setting
\begin{equation}
    h(x,t)= \begin{cases}
        f(x,t), &\text{if } |f(x,t)|>
        \|f(\cdot,t)\|_{L^{q,\infty}}^p
        \\
        0, &\text{if } |f(x,t)|\leq
        \|f(\cdot,t)\|_{L^{q,\infty}}^p
            \end{cases},
\end{equation}
and
\begin{equation}
    \psi(x,t)= \begin{cases}
        f(x,t), &\text{if } |f(x,t)| \leq
        \|f(\cdot,t)\|_{L^{q,\infty}}^p
        \\
        0, &\text{if } |f(x,t)| >
        \|f(\cdot,t)\|_{L^{q,\infty}}^p
            \end{cases}.
\end{equation}
It is clear that $f=h+\psi$,
and applying Theorem \ref{MixedLpSum},
we have the bounds
\begin{equation} \label{EigenBoundA}
    \int_0^T \|h(\cdot,t)\|_{L^{q'}}^{p'} \diff t
    \leq \left(\frac{q}{q-q'}\right)^\frac{p'}{q'} 
    \int_0^T \|f(\cdot,t)\|_{L^{q,\infty}}^p \diff t,
\end{equation}
and
\begin{equation} \label{EigenBoundB}
    \int_0^T \|\psi(\cdot,t)\|_{L^\infty} \diff t
    \leq \int_0^T 
    \|f(\cdot,t)\|_{L^{q,\infty}}^p \diff t.
\end{equation}
Recalling that $\lambda_2^+=h+\psi+g,$ and applying \eqref{EigenBoundA}, \eqref{EigenBoundB}, and Theorem \ref{StrainSumSpace}, we can conclude that
for all $\frac{3}{2}<q'<q,$
and for all $0<T<T_{max}$
\begin{align}
    \|S(\cdot,T)\|_{L^2}^2 
    &\leq 
    \left\| S^0 \right\|_{L^2}^2
    \exp\left(\frac{C_{p'}}{\nu^{p'-1}} \int_0^T 
    \|h(\cdot,t)\|_{L^{q'}}^{p'} \diff t
    +2\int_0^T \|\psi(\cdot,t)+g(\cdot,t)\|_{L^\infty} 
    \diff t \right) \\
    &\leq
    \left\| S^0 \right\|_{L^2}^2
    \exp\left(\left(\frac{C_{p'}}{\nu^{p'-1}}
    \left(\frac{q}{q-q'}\right)^\frac{p'}{q'} +2\right)
    \int_0^T \|f(\cdot,t)\|_{L^{q,\infty}}^p \diff t
    +2\int_0^T \|g(\cdot,t)\|_{L^\infty} \diff t
    \right).
\end{align}
This completes the proof.

\end{proof}

\section{Velocity regularity criterion} \label{VelocitySection}

In this section, we will consider regularity criteria for the Navier--Stokes equation in sum spaces in terms of the velocity.
We will begin by proving Theorem \ref{VelocitySumSpaceIntro}, which is restated here for the reader's convenience.

\begin{theorem} \label{VelocitySumSpace}
Suppose $u\in C\left(\left[0,T_{max}\right);
\dot{H}^1_{df}\right)$ is a smooth solution of the Navier--Stokes equation. Let $3<q<+\infty,
\frac{2}{p}+\frac{3}{q}=1,$ and let 
$ u=v+\sigma.$ Then for all $0<T<T_{max}$
\begin{equation} \label{EnstrophyBoundVelocity}
    \|\nabla u(\cdot,T)\|_{L^2}^2 \leq 
    \left\| \nabla u^0 \right\|_{L^2}^2
    \exp\left(\frac{C_p}{\nu^{p-1}} \int_0^T 
    \|v(\cdot,t)\|_{L^q}^p \diff t
    +\frac{1}{\nu} \int_0^T \|\sigma(\cdot,t)\|_{L^\infty}^2
    \diff t \right),
\end{equation}
where $C_p$ depends only on $p.$
In particular if $T_{max}<+\infty,$ then
\begin{equation}
    \frac{C_p}{\nu^{p-1}} \int_0^{T_{max}} 
    \|v(\cdot,t)\|_{L^q}^p \diff t
    +\frac{1}{\nu} \int_0^{T_{max}} 
    \|\sigma(\cdot,t)\|_{L^\infty}^2 \diff t
    =+\infty.
\end{equation}
\end{theorem}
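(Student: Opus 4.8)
The plan is to control the growth of the enstrophy $\|\nabla u(\cdot,t)\|_{L^2}^2$ via a differential inequality and then integrate with Gr\"onwall. Since $\|\nabla u\|_{L^2}^2 = \|\omega\|_{L^2}^2$ by Proposition \ref{isometry}, and since $T_{max}<+\infty$ forces the enstrophy to blow up, it suffices to establish the bound \eqref{EnstrophyBoundVelocity}. I would begin from the standard enstrophy evolution identity, which after integrating by parts and using the divergence-free condition takes the form
\begin{equation*}
    \frac{1}{2}\partial_t \|\nabla u\|_{L^2}^2
    = -\nu \|\nabla u\|_{\dot{H}^1}^2
    - \int_{\mathbb{R}^3} (u\cdot\nabla)u \cdot (-\Delta u).
\end{equation*}
The viscous term provides dissipation proportional to $\|\nabla u\|_{\dot{H}^1}^2$, which I will use to absorb the nonlinearity, and the goal is to bound the nonlinear term by the velocity $u = v+\sigma$ in the sum space.

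Next I would estimate the nonlinear term by splitting $u=v+\sigma$ and applying H\"older's inequality in the appropriate way to each piece. For the piece controlled in $L^q_x$, I expect an estimate of the shape $\|v\|_{L^q}\|\nabla u\|_{L^{r_1}}\|\nabla u\|_{L^{r_2}}$ with exponents chosen so that one can interpolate the $\nabla u$ factors between $L^2$ and $L^6=\dot{H}^1$, using the Sobolev inequality (Theorem \ref{Sobolev}) to convert $L^6$ factors into $\dot{H}^1$ factors. With $\frac{2}{p}+\frac{3}{q}=1$, the interpolation exponents should be arranged so that the resulting power of $\|\nabla u\|_{\dot{H}^1}$ is strictly less than $2$, leaving room for a Young's inequality with exponents $(p,b)$ — the dual relationship $\frac{1}{p}+\frac{1}{b}=1$ following from the scaling relation exactly as in the proof of Theorem \ref{StrainSumSpace}. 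This produces a term $\frac{C_p}{\nu^{p-1}}\|v\|_{L^q}^p \|\nabla u\|_{L^2}^2$ plus an absorbable multiple of $\nu\|\nabla u\|_{\dot{H}^1}^2$. For the piece controlled in $L^\infty_x$, the estimate is more direct: bounding $\sigma$ in $L^\infty$ pulls it out, leaving $\|\sigma\|_{L^\infty}\|\nabla u\|_{L^2}\|\nabla u\|_{L^{?}}$, again interpolated and absorbed via Young's inequality but now with the conjugate exponents $(2,2)$, yielding the characteristic $\frac{1}{\nu}\|\sigma\|_{L^\infty}^2\|\nabla u\|_{L^2}^2$ term.

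Combining these, I expect a differential inequality of the form
\begin{equation*}
    \partial_t \|\nabla u(\cdot,t)\|_{L^2}^2
    \leq \left(\frac{C_p}{\nu^{p-1}}\|v(\cdot,t)\|_{L^q}^p
    + \frac{1}{\nu}\|\sigma(\cdot,t)\|_{L^\infty}^2\right)
    \|\nabla u(\cdot,t)\|_{L^2}^2,
\end{equation*}
after which Gr\"onwall's inequality over $[0,T]$ yields \eqref{EnstrophyBoundVelocity} directly. The main obstacle is the careful handling of the nonlinear term: unlike the strain criterion in Theorem \ref{StrainSumSpace}, where the structural inequality $\partial_t\|S\|_{L^2}^2 \le -2\nu\|S\|_{\dot H^1}^2 + 2\int \lambda_2^+ |S|^2$ is supplied ready-made by Proposition \ref{EnstrophyGrowth}, here I must derive and bound $\int (u\cdot\nabla)u\cdot(-\Delta u)$ myself and choose the interpolation exponents so that the power of $\|\nabla u\|_{\dot H^1}$ produced by the $L^q$ piece is exactly $\frac{3}{q}$, matching $2-\frac{2}{p}$ under the scaling constraint; getting this bookkeeping right, and confirming that $q>3$ (equivalently $p<\infty$) is precisely what keeps $\frac{3}{q}<2$ so that the dissipation can absorb the gradient factors, is the delicate part. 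The remaining steps are routine applications of Young's and Gr\"onwall's inequalities as in the earlier proof.
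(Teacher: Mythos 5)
Your proposal follows essentially the same route as the paper's proof: the identity $\partial_t\|\nabla u\|_{L^2}^2=-2\nu\|\Delta u\|_{L^2}^2-2\left<(u\cdot\nabla)u,-\Delta u\right>$, the splitting $u=v+\sigma$ with H\"older giving $\|v\|_{L^q}\|\nabla u\|_{L^r}\|\Delta u\|_{L^2}$ and $\|\sigma\|_{L^\infty}\|\nabla u\|_{L^2}\|\Delta u\|_{L^2}$, interpolation of $\|\nabla u\|_{L^r}$ between $L^2$ and $L^6$ plus Sobolev, Young's inequality with exponents $(p,b)$ and $(2,2)$ respectively, and Gr\"onwall. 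The only slip is bookkeeping in the sketch: the interpolation contributes the power $\frac{3}{q}$ of $\|\Delta u\|_{L^2}$, and together with the factor already present from H\"older the total power is $1+\frac{3}{q}=2-\frac{2}{p}<2$ (not $\frac{3}{q}=2-\frac{2}{p}$), which is exactly what the absorption into the dissipation requires and what the paper computes.
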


\begin{proof}
We will begin by observing that if $T_{max}<+\infty,$ then
\begin{equation}
    \lim_{T \to T_{max}}\|\nabla u(\cdot,T)\|_{L^2}^2= +\infty,
\end{equation}
so it suffices to prove the bound \eqref{EnstrophyBoundVelocity}.
To prove this bound we will make use of our bound for enstrophy growth in terms of the velocity, computing that
\begin{align}
\partial_t \|\nabla u(\cdot,t)\|_{L^2}^2
&=
-2\nu\|-\Delta u\|_{L^2}^2
-2\left<(u\cdot \nabla)u, -\Delta u\right>\\
&=
-2\nu\|-\Delta u\|_{L^2}^2
-2\left<(v\cdot \nabla)u, -\Delta u\right>
-2\left<(\sigma\cdot \nabla)u, -\Delta u\right>\\
&\leq \label{Step1}
-2\nu\|-\Delta u\|_{L^2}^2
+2\|v\|_{L^q}\|\nabla u\|_{L^r}\|-\Delta u\|_{L^2}
+2\|\sigma\|_{L^\infty} \|\nabla u\|_{L^2}\|-\Delta u\|_{L^2},
\end{align}
where $\frac{1}{q}+\frac{1}{r}=\frac{1}{2},$ and we have applied H\"older's inequality.
Applying Young's inequality we find that
\begin{equation} \label{Step2}
    2\|\sigma\|_{L^\infty} \|\nabla u\|_{L^2}\|-\Delta u\|_{L^2}
    \leq \frac{1}{\nu}
    \|\sigma\|_{L^\infty}^2\|\nabla u\|_{L^2}^2
    +\nu \|-\Delta u\|_{L^2}^2.
\end{equation}

Now we need to bound the term $2\|v\|_{L^q}\|\nabla u\|_{L^r}\|-\Delta u\|_{L^2}.$
Observe that $\frac{1}{r}=\frac{1}{2}-\frac{1}{q},$ 
with $3<q<+\infty,$ so we can conclude that
$2<r<6.$
Let $\rho=\frac{3}{q}$.
Observe that $0<\rho<1$ and
\begin{align}
    (1-\rho)\frac{1}{2}+\rho\frac{1}{6}
    &=
    \frac{1}{2}\left(1-\frac{3}{q}\right)
    +\frac{1}{6}\frac{3}{q}\\
    &=
    \frac{1}{2}-\frac{1}{q}\\
    &=
    \frac{1}{r}.
\end{align}
Therefore, interpolating between $L^2$ and $L^6$ we can see that
\begin{align}
    \|\nabla u\|_{L^r}
    &\leq
    \|\nabla u\|_{L^2}^{1-\frac{3}{q}}\|\nabla u\|_{L^6}^{\frac{3}{q}}\\
    &=
    \|\nabla u\|_{L^2}^{\frac{2}{p}}\|\nabla u\|_{L^6}^{1-\frac{2}{p}},
\end{align}
recalling that by hypothesis $\frac{2}{p}+\frac{3}{q}=1.$
We can apply the Sobolev inequality to conclude that
\begin{equation}
    \|\nabla u\|_{L^6} \leq C\|-\Delta u\|_{L^2},
\end{equation}
and putting this together with the interpolation inequality we find that 
\begin{equation}
    2\|v\|_{L^q}\|\nabla u\|_{L^r}\|-\Delta u\|_{L^2}
    \leq C
    \|v\|_{L^q}\|\nabla u\|_{L^2}^\frac{2}{p}
    \|-\Delta u\|_{L^2}^{2-\frac{2}{p}}.
\end{equation}
Recalling that $2<p<+\infty$, take $1<b<2,$ such that $\frac{1}{p}+\frac{1}{b}=1.$
We can then rewrite the bound in terms of $b$ and apply Young's inequality with exponents $p,b$ to find that
\begin{align}
    2\|v\|_{L^q}\|\nabla u\|_{L^r}\|-\Delta u\|_{L^2}
    &\leq C
    \|v\|_{L^q}\|\nabla u\|_{L^2}^\frac{2}{p}
    \|-\Delta u\|_{L^2}^{\frac{2}{b}}\\
    &=
    \frac{1}{\nu^{\frac{1}{b}}}\|v\|_{L^q}\|\nabla u\|_{L^2}^\frac{2}{p}
    \nu^\frac{1}{b}\|-\Delta u\|_{L^2}^{\frac{2}{b}}\\
    &\leq
    \frac{C_p}{\nu^\frac{p}{b}}\|v\|_{L^q}^p
    \|\nabla u\|_{L^2}^2+ \nu \|-\Delta u\|_{L^2}^2 \\
    &=  \label{Step3}
    \frac{C_p}{\nu^{p-1}}\|v\|_{L^q}^p
    \|\nabla u\|_{L^2}^2+ \nu \|-\Delta u\|_{L^2}^2.
\end{align}

Putting together 
\eqref{Step1},\eqref{Step2}, and \eqref{Step3},
we find that for all $0<t<T_{max,}$
\begin{equation}
    \partial_t\|\nabla u(\cdot,t)\|_{L^2}^2
    \leq
    \left(\frac{C_p}{\nu^{p-1}}\|v\|_{L^q}^p
    +\frac{1}{\nu}\|\sigma\|_{L^\infty}^2\right)
    \|\nabla u\|_{L^2}^2.
\end{equation}
Applying Gr\"onwall's inequality, we can conclude that
for all $0<T<T_{max},$
\begin{equation}
    \|\nabla u(\cdot,T)\|_{L^2}^2 \leq 
    \left\| \nabla u^0 \right\|_{L^2}^2
    \exp\left(\frac{C_p}{\nu^{p-1}} \int_0^T 
    \|v(\cdot,t)\|_{L^q}^p \diff t
    +\frac{1}{\nu} \int_0^T \|\sigma(\cdot,t)\|_{L^\infty}^2
    \diff t \right),
\end{equation}
and this completes the proof.
\end{proof}

We will now prove a corollary that requires the concentration of the $L^p_T L^q_x$ norm of $u$ at large values in the range 
for all 
$3<q<+\infty, \frac{2}{p}+\frac{3}{q}=1$,
as $t\to T_{max}$, when $T_{max}<+\infty.$

\begin{corollary} \label{VelocityCor}
Suppose $u\in C\left(\left[0,T_{max}\right);
\dot{H}^1_{df}\right)$ is a smooth solution of the Navier--Stokes equation, and
suppose $h\in L^2\left(\left[0,T_{max}\right);
\mathbb{R}^+\right)$. 
Let $3<q<+\infty,
\frac{2}{p}+\frac{3}{q}=1,$ and let
\begin{equation}
    v(x,t)= \begin{cases}
        u(x,t), &\text{if } |u(x,t)|> h(t)
        \\
        0, &\text{if } |u(x,t)|\leq h(t) 
        \end{cases}.
\end{equation}
Then for all $0<T<T_{max}$
\begin{equation}
    \|\nabla u(\cdot,T)\|_{L^2}^2 \leq 
    \left\| \nabla u^0 \right\|_{L^2}^2
    \exp\left(\frac{C_p}{\nu^{p-1}} \int_0^T 
    \|v(\cdot,t)\|_{L^q}^p \diff t
    +\frac{1}{\nu} \int_0^T h(t)^2
    \diff t \right),
\end{equation}
where $C_p$ depends only on $p.$
In particular if $T_{max}<+\infty,$ then
\begin{equation}
    \int_0^{T_{max}} 
    \|v(\cdot,t)\|_{L^q}^p \diff t
    =+\infty.
\end{equation}
\end{corollary}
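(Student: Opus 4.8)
The plan is to obtain this corollary as an immediate specialization of Theorem \ref{VelocitySumSpace}, mirroring the reduction of Corollary \ref{StrainCor} to Theorem \ref{StrainSumSpace}. The one construction needed is to manufacture, out of the given cutoff $h$, a decomposition $u = v + \sigma$ whose $L^\infty_x$ piece is dominated by $h(t)$.

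First I would define the complementary piece
\begin{equation}
    \sigma(x,t) = \begin{cases}
        u(x,t), &\text{if } |u(x,t)| \leq h(t) \\
        0, &\text{if } |u(x,t)| > h(t)
    \end{cases},
\end{equation}
so that $u = v + \sigma$ by construction. Reading off the definition, $|\sigma(x,t)| \leq h(t)$ for almost every $x \in \mathbb{R}^3$, and hence $\|\sigma(\cdot,t)\|_{L^\infty} \leq h(t)$ for all $0 < t < T_{max}$. Feeding this decomposition into Theorem \ref{VelocitySumSpace} and replacing $\|\sigma(\cdot,t)\|_{L^\infty}$ by the larger quantity $h(t)$ inside the exponential yields the stated enstrophy bound at once.

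For the divergence statement, I would invoke the blowup of enstrophy: if $T_{max} < +\infty$ then $\lim_{T \to T_{max}} \|\nabla u(\cdot,T)\|_{L^2}^2 = +\infty$, which forces the argument of the exponential to diverge as $T \to T_{max}$. Since $h \in L^2\left(\left[0,T_{max}\right); \mathbb{R}^+\right)$, the contribution $\frac{1}{\nu}\int_0^{T_{max}} h(t)^2 \diff t$ is finite, so the remaining term $\int_0^{T_{max}} \|v(\cdot,t)\|_{L^q}^p \diff t$ must be infinite. There is no real obstacle here; the only point worth flagging is that the velocity criterion in Theorem \ref{VelocitySumSpace} carries $\|\sigma\|_{L^\infty}^2$ rather than $\|\sigma\|_{L^\infty}$, which is precisely why the hypothesis here is $h \in L^2$, as opposed to $h \in L^1$ in the strain case of Corollary \ref{StrainCor}.
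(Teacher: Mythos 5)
Your proposal is correct and matches the paper's proof essentially line for line: the same complementary definition of $\sigma$, the same bound $\|\sigma(\cdot,t)\|_{L^\infty}\leq h(t)$, the same application of Theorem \ref{VelocitySumSpace}, and the same use of enstrophy blowup together with $h\in L^2$ to isolate the divergence of $\int_0^{T_{max}}\|v(\cdot,t)\|_{L^q}^p \diff t$. Your remark about the exponent $2$ on $\|\sigma\|_{L^\infty}$ being the reason for the $L^2$ hypothesis on $h$ is also exactly the right observation.
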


\begin{proof}
We will begin by defining
\begin{equation}
    \sigma(x,t)= \begin{cases}
        u(x,t), &\text{if } |u(x,t)|\leq h(t) \\
        0, &\text{if } |u(x,t)|> h(t)
        \end{cases}.
\end{equation}
We can see immediately that for all $0<t<T_{max},$
\begin{equation}
   \|\sigma(\cdot,t)\|_{L^\infty}\leq h(t), 
\end{equation}
and that
\begin{equation}
    u=v+\sigma.
\end{equation}
Therefore we can apply Theorem \ref{VelocitySumSpace}
and find that
\begin{align}
    \|\nabla u(\cdot,T)\|_{L^2}^2 
    &\leq 
    \left\| \nabla u^0 \right\|_{L^2}^2
    \exp\left(\frac{C_p}{\nu^{p-1}} \int_0^T 
    \|v(\cdot,t)\|_{L^q}^p \diff t
    +\frac{1}{\nu}\int_0^T \|\sigma(\cdot,t)\|_{L^\infty}^2
    \diff t \right)\\
    &\leq
    \left\| \nabla u^0 \right\|_{L^2}^2
    \exp\left(\frac{C_p}{\nu^{p-1}} \int_0^T 
    \|v(\cdot,t)\|_{L^q}^p \diff t
    +\frac{1}{\nu}\int_0^T  h(t)^2 \diff t
    \right).
\end{align}

Next we will note, as in Theorem \ref{VelocitySumSpace}, that if $T_{max}<+\infty,$ then
\begin{equation}
    \lim_{T\to T_{max}}\|\nabla u(\cdot,T)\|_{L^2}^2=+\infty.
\end{equation}
Therefore we can conclude that if $T_{max}<+\infty,$ then
\begin{equation}
    \frac{C_p}{\nu^{p-1}} \int_0^{T_{max}} 
    \|v(\cdot,t)\|_{L^q}^p \diff t
    +\frac{1}{\nu}\int_0^T  h(t)^2 \diff t
    =+\infty.
\end{equation}
However, we know by hypothesis that
\begin{equation}
    \int_0^{T_{max}}  h(t)^2 \diff t<+\infty,
\end{equation}
so we may conclude that
\begin{equation}
    \int_0^{T_{max}} 
    \|v(\cdot,t)\|_{L^q}^p \diff t
    =+\infty.
\end{equation}
This completes the proof.
\end{proof}

Using Theorem \ref{VelocitySumSpace} and Theorem \ref{MixedLpSum}, we will now extend our regularity criterion for the velocity from the space 
$L^p_T L^q_x+ L^2_T L^\infty_x$
to the slightly larger space
$L^p_T L^{q,\infty}_x+ L^2_T L^\infty_x$,
proving Corollary \ref{VelocitySumSpaceWeakIntro},
which is restated here for the reader's convenience.

\begin{corollary} \label{VelocitySumSpaceWeak}
Suppose $u\in C\left(\left[0,T_{max}\right);
\dot{H}^1_{df}\right)$ is a smooth solution of the Navier--Stokes equation. Let $3<q<+\infty,
\frac{2}{p}+\frac{3}{q}=1,$ and let 
$ u=v+\sigma.$ Then for all $0<T<T_{max}$
\begin{equation} \label{EnstrophyBoundVelocityWeak}
    \|\nabla u(\cdot,T)\|_{L^2}^2 \leq 
    \left\| \nabla u^0 \right\|_{L^2}^2
    \exp\left(\Tilde{C}_p \int_0^T 
    \|v(\cdot,t)\|_{L^{q,\infty}}^p \diff t
    +\frac{2}{\nu} \int_0^T \|\sigma(\cdot,t)\|_{L^\infty}^2
    \diff t \right),
\end{equation}
where
\begin{equation}
    \Tilde{C}_p=
    \frac{C_{p'}}{\nu^{p'-1}}
    \left(\frac{q}{q-q'}\right)^\frac{p'}{q'}
    +\frac{2}{\nu},
\end{equation}
with $C_{p'}$ taken as in Theorem \ref{VelocitySumSpace},
and $3<q'<q, \frac{2}{p'}+\frac{3}{q'}=1.$
In particular if $T_{max}<+\infty,$ then
\begin{equation}
    \Tilde{C}_p \int_0^T 
    \|v(\cdot,t)\|_{L^{q,\infty}}^p \diff t
    +\frac{2}{\nu} \int_0^T \|\sigma(\cdot,t)\|_{L^\infty}^2
    \diff t 
    =+\infty.
\end{equation}
\end{corollary}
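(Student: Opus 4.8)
The plan is to deduce this corollary directly from Theorem \ref{VelocitySumSpace} together with the sum space decomposition in Theorem \ref{MixedLpSum}, mirroring the argument already used for Corollary \ref{StrainSumSpaceWeak}. As in Theorem \ref{VelocitySumSpace}, if $T_{max}<+\infty$ then $\|\nabla u(\cdot,T)\|_{L^2}^2 \to +\infty$ as $T\to T_{max}$, so it suffices to establish the enstrophy bound \eqref{EnstrophyBoundVelocityWeak}; the concluding blowup statement follows immediately by taking $T \to T_{max}$.

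First I would fix $3<q'<q$ with $\frac{2}{p'}+\frac{3}{q'}=1$ and apply Theorem \ref{MixedLpSum} to $f=v$ with parameters $k=2$ and $m=3$, noting that the scaling relation $\frac{2}{p}+\frac{3}{q}=1$ is exactly $\frac{k}{p}+\frac{m}{q}=1$ and that the hypothesis $3<q'<q$ gives $m<q'<q$. This yields an explicit splitting $v=h+\psi$, where $h$ retains the values of $v$ exceeding the cutoff $\|v(\cdot,t)\|_{L^{q,\infty}}^{p/2}$ and $\psi$ retains the rest, together with the two bounds
\begin{equation}
    \int_0^T \|h(\cdot,t)\|_{L^{q'}}^{p'} \diff t
    \leq \left(\frac{q}{q-q'}\right)^\frac{p'}{q'}
    \int_0^T \|v(\cdot,t)\|_{L^{q,\infty}}^p \diff t,
    \qquad
    \int_0^T \|\psi(\cdot,t)\|_{L^\infty}^2 \diff t
    \leq \int_0^T \|v(\cdot,t)\|_{L^{q,\infty}}^p \diff t .
\end{equation}

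Next I would write $u = h + (\psi+\sigma)$, with $h \in L^{p'}_T L^{q'}_x$ and $\psi+\sigma \in L^2_T L^\infty_x$, and feed this decomposition into Theorem \ref{VelocitySumSpace} (with exponents $p',q'$). This produces the factor $\frac{C_{p'}}{\nu^{p'-1}}\int_0^T \|h\|_{L^{q'}}^{p'}\diff t + \frac{1}{\nu}\int_0^T \|\psi+\sigma\|_{L^\infty}^2\diff t$ inside the exponential; bounding the first integral via the displayed estimate for $h$ is routine. The one point requiring care---and the only real obstacle---is the $L^\infty$ cross term: here I would use $\|\psi+\sigma\|_{L^\infty}^2 \leq 2\|\psi\|_{L^\infty}^2 + 2\|\sigma\|_{L^\infty}^2$ followed by the estimate for $\psi$, which is precisely why the coefficient on $\int_0^T\|\sigma\|_{L^\infty}^2$ doubles to $\frac{2}{\nu}$ and an additional $\frac{2}{\nu}\int_0^T\|v\|_{L^{q,\infty}}^p$ term appears. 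Collecting the two contributions proportional to $\int_0^T \|v(\cdot,t)\|_{L^{q,\infty}}^p\diff t$ then gives exactly the constant $\Tilde{C}_p = \frac{C_{p'}}{\nu^{p'-1}}\left(\frac{q}{q-q'}\right)^{p'/q'} + \frac{2}{\nu}$, and the stated bound follows.
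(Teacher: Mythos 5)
Your proposal is correct and follows essentially the same route as the paper: fix $3<q'<q$, split $v$ at the cutoff $\|v(\cdot,t)\|_{L^{q,\infty}}^{p/2}$ via Theorem \ref{MixedLpSum} (with $k=2$, $m=3$), feed the decomposition $u=h+(\psi+\sigma)$ into Theorem \ref{VelocitySumSpace} at exponents $p',q'$, and absorb the cross term with $\|\psi+\sigma\|_{L^\infty}^2\leq 2\|\psi\|_{L^\infty}^2+2\|\sigma\|_{L^\infty}^2$, which is exactly how the paper arrives at the constant $\Tilde{C}_p$. No gaps.
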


\begin{proof}
We know that if $T_{max}<+\infty,$ then 
\begin{equation}
    \lim_{T \to T_{max}}\|\nabla u(\cdot,T)\|_{L^2}^2
    =+\infty,   
\end{equation}
so it suffices to prove the bound
\eqref{EnstrophyBoundVelocityWeak}.

We will begin by fixing $3<q'<q$ and setting
\begin{equation}
    \phi(x,t)= \begin{cases}
        v(x,t), &\text{if } |v(x,t)|>
        \|v(\cdot,t)\|_{L^{q,\infty}}^\frac{p}{2}
        \\
        0, &\text{if } |v(x,t)|\leq
        \|v(\cdot,t)\|_{L^{q,\infty}}^\frac{p}{2}
            \end{cases},
\end{equation}
and
\begin{equation}
    \psi(x,t)= \begin{cases}
        v(x,t), &\text{if } |v(x,t)| \leq
        \|v(\cdot,t)\|_{L^{q,\infty}}^\frac{p}{2}
        \\
        0, &\text{if } |v(x,t)| >
        \|v(\cdot,t)\|_{L^{q,\infty}}^\frac{p}{2}
            \end{cases}.
\end{equation}
It is clear that $v=\phi+\psi$,
and applying Theorem \ref{MixedLpSum},
we have the bounds
\begin{equation} \label{VelocityBoundA}
    \int_0^T \|\phi(\cdot,t)\|_{L^{q'}}^{p'} \diff t
    \leq \left(\frac{q}{q-q'}\right)^\frac{p'}{q'} 
    \int_0^T \|v(\cdot,t)\|_{L^{q,\infty}}^p \diff t,
\end{equation}
and
\begin{equation} \label{VelocityBoundB}
    \int_0^T \|\psi(\cdot,t)\|_{L^\infty}^2 \diff t
    \leq \int_0^T 
    \|v(\cdot,t)\|_{L^{q,\infty}}^p \diff t.
\end{equation}
Recalling that $\omega=\phi+\psi+\sigma,$ and applying \eqref{VelocityBoundA}, \eqref{VelocityBoundB}, and Theorem \ref{VelocitySumSpace}, we can conclude that
for all $3<q'<q, \frac{2}{p'}+\frac{3}{q'}=1$,
and for all $0<T<T_{max}$,
\begin{align}
    \|\nabla u(\cdot,T)\|_{L^2}^2 
    &\leq 
    \left\| \nabla u^0 \right\|_{L^2}^2
    \exp\left(\frac{C_{p'}}{\nu^{p'-1}} \int_0^T 
    \|\phi(\cdot,t)\|_{L^{q'}}^{p'} \diff t
    +\frac{1}{\nu}\int_0^T \|\psi(\cdot,t)
    +\sigma(\cdot,t)\|_{L^\infty}^2 \diff t \right) \\
    &\leq
    \left\| \nabla u^0 \right\|_{L^2}^2
    \exp\left(\frac{C_{p'}}{\nu^{p'-1}} \int_0^T 
    \|\phi(\cdot,t)\|_{L^{q'}}^{p'} \diff t
    +\frac{1}{\nu}\int_0^T 
    \left(\|\psi(\cdot,t)\|_{L^\infty}
    +\|\sigma(\cdot,t)\|_{L^\infty}\right)^2 
    \diff t \right) \\
    &\leq
    \left\| \nabla u^0 \right\|_{L^2}^2
    \exp\left(\frac{C_{p'}}{\nu^{p'-1}} \int_0^T 
    \|\phi(\cdot,t)\|_{L^{q'}}^{p'} \diff t
    +\frac{2}{\nu}\int_0^T 
    \|\psi(\cdot,t)\|_{L^\infty}^2
    +\|\sigma(\cdot,t)\|_{L^\infty}^2 
    \diff t \right) \\
    &\leq
    \left\| \nabla u^0 \right\|_{L^2}^2
    \exp\left(\left(\frac{C_{p'}}{\nu^{p'-1}}
    \left(\frac{q}{q-q'}\right)^\frac{p'}{q'} 
    +\frac{2}{\nu}\right)
    \int_0^T \|v(\cdot,t)\|_{L^{q,\infty}}^p \diff t
    +\frac{2}{\nu}\int_0^T 
    \|\sigma(\cdot,t)\|_{L^\infty} \diff t
    \right).
\end{align}
This completes the proof.
\end{proof}

We will finish this section by proving the endpoint regularity criterion Theorem \ref{VelocityEndpointIntro}, which is restated here for the reader's convenience.

\begin{theorem} \label{VelocityEndpoint}
Suppose $u\in C\left(\left[0,T_{max}\right);
\dot{H}^1_{df}\right)$ is a smooth solution of the Navier--Stokes equation, and
suppose $h\in L^2\left(\left[0,T_{max}\right);
\mathbb{R}^+\right)$.
Let
\begin{equation}
    v(x,t)= \begin{cases}
        u(x,t), &\text{if } |u(x,t)|> h(t)
        \\
        0, &\text{if } |u(x,t)|\leq h(t) 
        \end{cases}.
\end{equation}
If $T_{max}<+\infty,$ then
\begin{equation}
    \limsup_{t \to T_{max}}
    \left\|v(\cdot,t)\right\|_{L^3}
    \geq \sqrt{3} \left(\frac{\pi}{2}\right)^\frac{2}{3} \nu.
\end{equation}
\end{theorem}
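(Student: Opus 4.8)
The plan is to argue by contradiction, mirroring the structure of the proof of Theorem \ref{EigenEndpoint} but using the velocity enstrophy identity derived in the proof of Theorem \ref{VelocitySumSpace}. Write $K=\sqrt{3}\left(\frac{\pi}{2}\right)^{2/3}$, so that the sharp Sobolev constant of Theorem \ref{Sobolev} is exactly $\frac{1}{K}$. Suppose toward contradiction that $T_{max}<+\infty$ and $\limsup_{t\to T_{max}}\|v(\cdot,t)\|_{L^3}<K\nu$. The first step is to upgrade this strict limit-superior bound to a \emph{uniform} gap: I choose $\beta$ with $\limsup_{t\to T_{max}}\|v(\cdot,t)\|_{L^3}<\beta<K\nu$, so that there exists $\epsilon>0$ with $\|v(\cdot,t)\|_{L^3}\le\beta$ for all $T_{max}-\epsilon<t<T_{max}$, and I set $\delta_0=\nu-\frac{\beta}{K}>0$. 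As in the earlier endpoint arguments, I define $\sigma$ to be the complementary truncation of $u$, so that $u=v+\sigma$ and $\|\sigma(\cdot,t)\|_{L^\infty}\le h(t)$.

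Next I would run the enstrophy estimate. Starting from $\partial_t\|\nabla u\|_{L^2}^2=-2\nu\|-\Delta u\|_{L^2}^2-2\langle(u\cdot\nabla)u,-\Delta u\rangle$ and splitting the advection term along $u=v+\sigma$, H\"older's inequality with exponents $3,6,2$ on the $v$ piece and $\infty,2,2$ on the $\sigma$ piece gives
\begin{equation}
    \partial_t\|\nabla u\|_{L^2}^2 \le -2\nu\|-\Delta u\|_{L^2}^2 + 2\|v\|_{L^3}\|\nabla u\|_{L^6}\|-\Delta u\|_{L^2} + 2\|\sigma\|_{L^\infty}\|\nabla u\|_{L^2}\|-\Delta u\|_{L^2}.
\end{equation}
The key simplification at the endpoint $q=3$ is that the H\"older conjugate is exactly $r=6$, so no interpolation is needed: applying the sharp Sobolev inequality to $\nabla u$, together with the Plancherel identity $\|\nabla\nabla u\|_{L^2}=\|-\Delta u\|_{L^2}$, yields $\|\nabla u\|_{L^6}\le\frac{1}{K}\|-\Delta u\|_{L^2}$. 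Hence the $v$ term is bounded by $\frac{2\|v\|_{L^3}}{K}\|-\Delta u\|_{L^2}^2$, and combining it with the dissipation produces $-2\left(\nu-\frac{\|v\|_{L^3}}{K}\right)\|-\Delta u\|_{L^2}^2\le-2\delta_0\|-\Delta u\|_{L^2}^2$ on the window, using the uniform gap.

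The remaining $\sigma$ term is handled by Young's inequality, $2\|\sigma\|_{L^\infty}\|\nabla u\|_{L^2}\|-\Delta u\|_{L^2}\le\frac{1}{2\delta_0}\|\sigma\|_{L^\infty}^2\|\nabla u\|_{L^2}^2+2\delta_0\|-\Delta u\|_{L^2}^2$, whose last term is exactly absorbed by the $-2\delta_0\|-\Delta u\|_{L^2}^2$ available from the previous step. This leaves $\partial_t\|\nabla u\|_{L^2}^2\le\frac{1}{2\delta_0}h(t)^2\|\nabla u\|_{L^2}^2$ for $T_{max}-\epsilon<t<T_{max}$. Since $h\in L^2$, Gr\"onwall's inequality bounds $\|\nabla u(\cdot,t)\|_{L^2}^2$ uniformly up to $T_{max}$, contradicting the enstrophy blowup guaranteed by \eqref{BlowupEnstrophyLB}. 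The main obstacle I anticipate is the constant bookkeeping: extracting a genuinely uniform dissipation gap $\delta_0$ from the strict limit-superior inequality (so that the threshold is exactly $K\nu$ rather than merely a quantity of that order) and correctly identifying $\|\nabla u\|_{L^6}$ with a multiple of $\|-\Delta u\|_{L^2}$ through the sharp Sobolev constant, since these are precisely what pin down the stated lower bound $\sqrt{3}\left(\frac{\pi}{2}\right)^{2/3}\nu$.
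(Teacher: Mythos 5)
Your proposal is correct and follows essentially the same route as the paper's proof: contradiction, a uniform gap below the sharp Sobolev threshold on a time window near $T_{max}$, the same H\"older/Sobolev treatment of the split advection term, absorption of the $\sigma$ term by Young's inequality into the residual dissipation, and Gr\"onwall with $h\in L^2$ to contradict enstrophy blowup. The only differences are cosmetic (working with $\partial_t\|\nabla u\|_{L^2}^2$ rather than half of it, and parametrizing the gap by $\delta_0=\nu-\beta/K$ instead of the paper's $\epsilon$), and your constants match the paper's.
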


\begin{proof}
Suppose towards contradiction that $T_{max}<+\infty$ and 
\begin{equation}
    \limsup_{t \to T_{max}}
    \left\|v(\cdot,t)\right\|_{L^3}
    < \sqrt{3} \left(\frac{\pi}{2}\right)^\frac{2}{3} \nu.
\end{equation}
Then there exists $\epsilon,\delta>0,$ 
such that for all $T_{max}-\delta<t<T_{max},$
\begin{equation} \label{VelocityAssumption}
    \left\|v(\cdot,t)\right\|_{L^3}
    <
    \sqrt{3}\left(\frac{\pi}{2}\right)^\frac{2}{3}\nu
    -\sqrt{3}\left(\frac{\pi}{2}\right)^\frac{2}{3}\epsilon.
\end{equation}
We will again define
\begin{equation}
    \sigma(x,t)= \begin{cases}
        u(x,t), &\text{if } |u(x,t)|\leq h(t) \\
        0, &\text{if } |u(x,t)|> h(t)
        \end{cases}.
\end{equation}
We can see immediately that for all $0<t<T_{max},$
\begin{equation}
   \|\sigma(\cdot,t)\|_{L^\infty}\leq h(t), 
\end{equation}
and that
\begin{equation}
    u=v+\sigma.
\end{equation}
Now we can use our identity for enstrophy growth in terms of velocity, H\"older's inequality, and the Sobolev inequality to compute that for all $T_{max}-\delta<t<T_{max}$
\begin{align}
    \partial_t\frac{1}{2}\|\nabla u(\cdot,t)\|_{L^2}^2
    &=
    -\nu\|-\Delta u\|_{L^2}^2
    -\left<(u\cdot\nabla)u,-\Delta u\right>\\
    &=
    -\nu\|-\Delta u\|_{L^2}^2
    -\left<(v\cdot\nabla)u,-\Delta u\right>    
    -\left<(\sigma\cdot\nabla)u,-\Delta u\right>\\
    &\leq
    -\nu\|-\Delta u\|_{L^2}^2
    +\|v\|_{L^3}\|\nabla u\|_{L^6}\|-\Delta u\|_{L^2}
    +\|\sigma\|_{L^\infty}\|\nabla u\|_{L^2}
    \|-\Delta u\|_{L^2}\\
    &\leq
    -\nu\|-\Delta u\|_{L^2}^2
    +\frac{1}{\sqrt{3}}\left(\frac{2}{\pi}\right)^\frac{2}{3}
    \|v\|_{L^3}\|-\Delta u\|_{L^2}^2
    +h \|\nabla u\|_{L^2}
    \|-\Delta u\|_{L^2}.
\end{align}

We know from our hypothesis \eqref{VelocityAssumption} that
\begin{equation}
    \frac{1}{\sqrt{3}}\left(\frac{2}{\pi}\right)^\frac{2}{3}
    \|v\|_{L^3} \|-\Delta u\|_{L^2}^2
    <
    (\nu-\epsilon)\|-\Delta u\|_{L^2}^2,
\end{equation}
so we can apply Young's inequality and conclude that
\begin{align}
    \partial_t \frac{1}{2}\|\nabla u(\cdot,t)\|_{L^2}^2
    &\leq
    -\epsilon\|-\Delta u\|_{L^2}^2
    +h \|\nabla u\|_{L^2}
    \|-\Delta u\|_{L^2}\\
    &\leq
    \frac{1}{4\epsilon}h^2
    \|\nabla u\|_{L^2}^2.
\end{align}
Multiplying both sides by $2,$ we find that 
\begin{equation}
    \partial_t \|\nabla u(\cdot,t)\|_{L^2}^2
    \leq
    \frac{1}{2\epsilon} h^2 \|\nabla u\|_{L^2}^2.
\end{equation}
Applying Gr\"onwall's inequality we find that 
for all $T_{max}-\delta<T<T_{max},$
\begin{equation}
    \|\nabla u(\cdot,T)\|_{L^2}^2
    \leq \|\nabla u(\cdot,T_{max}-\delta)\|_{L^2}^2
    \exp \left(\frac{1}{2\epsilon} 
    \int_{T_{max}-\delta}^T h(t)^2 \diff t\right).
\end{equation}
Using the assumption that
$h\in L^2\left(\left[0,T_{max}\right);
\mathbb{R}^+\right),$ we can conclude that
\begin{align}
    \limsup_{T \to T_{max}} \|\nabla u(\cdot,T)\|_{L^2}^2
    &\leq
    \left\|\nabla u(\cdot,T_{max}-\delta) \right\|_{L^2}^2
    \exp\left(\frac{1}{2\epsilon} 
    \int_{T_{max}-\delta}^{T_{max}}h(t)^2 \diff t\right)\\
    &<
    +\infty.
\end{align}
This contradicts our assumption that $T_{max}<+\infty,$ so this completes the proof.
\end{proof}

\section{Vorticity regularity criterion} \label{VorticitySection}

In this section, we will consider regularity criteria for the Navier--Stokes equation in sum spaces in terms of the vorticity.
We will begin by proving Theorem \ref{VortSumSpaceIntro}, which is restated here for the reader's convenience.

\begin{theorem} \label{VortSumSpace}
Suppose $u\in C\left(\left[0,T_{max}\right);
\dot{H}^1_{df}\right)$ is a smooth solution of the Navier--Stokes equation. Let $\frac{3}{2}<q<+\infty,
\frac{2}{p}+\frac{3}{q}=2,$ and let 
$\omega =v+\sigma.$ Then for all $0<T<T_{max}$,
\begin{equation} \label{EnstrophyBoundVort}
    \|\omega(\cdot,T)\|_{L^2}^2 \leq 
    \left\| \omega^0 \right\|_{L^2}^2
    \exp\left(\frac{C_p}{\nu^{p-1}} \int_0^T 
    \|v(\cdot,t)\|_{L^q}^p \diff t
    +\sqrt{2}\int_0^T \|\sigma(\cdot,t)\|_{L^\infty} \diff t
    \right),
\end{equation}
where $C_p$ depends only on $p.$
In particular if $T_{max}<+\infty,$ then
\begin{equation}
    \frac{C_p}{\nu^{p-1}} \int_0^{T_{max}} 
    \|v(\cdot,t)\|_{L^q}^p \diff t
    +\sqrt{2}\int_0^{T_{max}} \|\sigma(\cdot,t)\|_{L^\infty} \diff t
    =+\infty.
\end{equation}
\end{theorem}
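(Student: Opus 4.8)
The plan is to run a Grönwall argument on the enstrophy $\|\omega(\cdot,t)\|_{L^2}^2$, exactly parallel to the proofs of Theorems \ref{StrainSumSpace} and \ref{VelocitySumSpace}. First I would note that by \eqref{BlowupEnstrophyLB} together with Proposition \ref{isometry}, if $T_{max}<+\infty$ then $\|\omega(\cdot,T)\|_{L^2}^2\to+\infty$ as $T\to T_{max}$, so it suffices to prove the stated exponential bound on $[0,T]$. Testing the vorticity evolution equation $\partial_t\omega-\nu\Delta\omega+(u\cdot\nabla)\omega-S\omega=0$ against $\omega$ and using that $u$ is divergence free to kill the transport term, I obtain the enstrophy growth identity
\begin{equation}
    \partial_t\|\omega(\cdot,t)\|_{L^2}^2 = -2\nu\|\omega\|_{\dot{H}^1}^2 + 2\int_{\mathbb{R}^3}\omega\cdot S\omega,
\end{equation}
so the whole problem reduces to controlling the vortex stretching term $\int_{\mathbb{R}^3}\omega\cdot S\omega$ in terms of the decomposition $\omega=v+\sigma$.

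I would then split $2\int\omega\cdot S\omega = 2\int v\cdot S\omega + 2\int\sigma\cdot S\omega$ and treat the two pieces separately. For the $L^\infty$ piece, the pointwise inequality $|\sigma\cdot S\omega|\leq|\sigma|\,|S|\,|\omega|$ together with Cauchy--Schwarz and the isometry $\|S\|_{L^2}=\tfrac{1}{\sqrt{2}}\|\omega\|_{L^2}$ (Proposition \ref{isometry}, equivalently the enstrophy identity $\|S\|_{L^2}^2=\tfrac12\|\omega\|_{L^2}^2$) gives
\begin{equation}
    2\int\sigma\cdot S\omega \leq 2\|\sigma\|_{L^\infty}\|S\|_{L^2}\|\omega\|_{L^2} = \sqrt{2}\,\|\sigma\|_{L^\infty}\|\omega\|_{L^2}^2,
\end{equation}
which is precisely the $\sqrt{2}$ coefficient in the statement. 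For the $L^q$ piece, Hölder's inequality with exponents $q$ and $q'=\tfrac{q}{q-1}$ gives $2\int v\cdot S\omega\leq 2\|v\|_{L^q}\|S\omega\|_{L^{q'}}$, and then $\|S\omega\|_{L^{q'}}\leq\|S\|_{L^{2q'}}\|\omega\|_{L^{2q'}}$. Crucially, I would \emph{not} invoke Calderón--Zygmund theory here; since $\tfrac{3}{2}<q<+\infty$ forces $2<2q'<6$, I instead interpolate both factors between $L^2$ and $L^6$, using $\|S\|_{\dot{H}^\alpha}=\tfrac{1}{\sqrt{2}}\|\omega\|_{\dot{H}^\alpha}$ for $\alpha\in\{0,1\}$ and the Sobolev inequality (Theorem \ref{Sobolev}) applied to both $S$ and $\omega$. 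With interpolation exponent $\theta=\tfrac{3}{2q}$ and the relation $\tfrac{2}{p}+\tfrac{3}{q}=2$ to simplify the resulting powers, this yields
\begin{equation}
    \|S\omega\|_{L^{q'}} \leq C\,\|\omega\|_{L^2}^{\frac{2}{p}}\|\omega\|_{\dot{H}^1}^{\frac{3}{q}},
\end{equation}
so that $2\int v\cdot S\omega\leq C\|v\|_{L^q}\|\omega\|_{L^2}^{2/p}\|\omega\|_{\dot{H}^1}^{3/q}$.

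Finally I would apply Young's inequality with conjugate exponents $p$ and $b=\tfrac{2q}{3}$ — which are conjugate precisely because $\tfrac{1}{p}+\tfrac{3}{2q}=1$ — splitting off a $\nu\|\omega\|_{\dot{H}^1}^2$ term that is absorbed into the dissipation $-2\nu\|\omega\|_{\dot{H}^1}^2$ and leaving a contribution $\tfrac{C_p}{\nu^{p-1}}\|v\|_{L^q}^p\|\omega\|_{L^2}^2$. Combining the two pieces gives the differential inequality
\begin{equation}
    \partial_t\|\omega(\cdot,t)\|_{L^2}^2 \leq \left(\frac{C_p}{\nu^{p-1}}\|v\|_{L^q}^p + \sqrt{2}\,\|\sigma\|_{L^\infty}\right)\|\omega\|_{L^2}^2,
\end{equation}
and Grönwall's inequality produces the claimed bound. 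The main obstacle is the vortex stretching term: unlike the strain case, where the nonlinearity already appears as $\int\lambda_2^+|S|^2$ with a scalar coefficient multiplying $|S|^2$, here the cubic term $\int\omega\cdot S\omega$ entangles $\omega$ with the strain $S$, so the essential technical point is to dispose of $S$ cleanly while keeping explicit constants. The two observations that make this work are that $|S\omega|\leq|S|\,|\omega|$ reduces everything to norms of $|S|$, and that the isometry of Proposition \ref{isometry} converts every $L^2$ and $\dot{H}^1$ norm of $S$ into the corresponding norm of $\omega$ up to a factor $\tfrac{1}{\sqrt{2}}$, so that no singular integral estimates with uncontrolled constants are required and $C_p$ depends only on $p$ and the sharp Sobolev constant.
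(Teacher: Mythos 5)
Your proposal is correct and follows essentially the same route as the paper's proof: the same enstrophy identity, the same splitting of the vortex stretching term via H\"older with exponents $(q,r)$ and $(1,\infty)$, the same interpolation of $\|S\|_{L^{2r}}$ and $\|\omega\|_{L^{2r}}$ between $L^2$ and $L^6$ with $\rho=\tfrac{3}{2q}$, the same use of Proposition \ref{isometry} and the Sobolev inequality, Young's inequality with conjugate exponents $p$ and $\tfrac{2q}{3}$, and Gr\"onwall. All exponent bookkeeping checks out, including the $\sqrt{2}$ coefficient from $\|S\|_{L^2}=\tfrac{1}{\sqrt 2}\|\omega\|_{L^2}$.
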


\begin{proof}
We will first observe that if $T_{max}<+\infty,$ then
\begin{equation}
    \lim_{T \to T_{max}}\|\omega(\cdot,T)\|_{L^2}
    =+\infty,
\end{equation}
and therefore it suffices to prove the bound \eqref{EnstrophyBoundVort}.
Applying our standard identity for enstrophy growth we can see that for all $0<t<T_{max}$
\begin{align}
    \partial_t \frac{1}{2} \|\omega(\cdot,t)\|_{L^2}^2
    &=
    -\nu \|\omega\|_{\dot{H}^1}^2
    +\left<S;\omega \otimes \omega\right>\\
    &=
    -\nu \|\omega\|_{\dot{H}^1}^2
    +\left<S\omega; \omega\right>\\
    &=
    -\nu \|\omega\|_{\dot{H}^1}^2
    +\left<S\omega; v\right>
    +\left<S\omega; \sigma\right>.
\end{align}
Applying H\"older's inequality with exponents 
$q,r$ and $1,\infty$ we find.
\begin{align}
    \partial_t \frac{1}{2} \|\omega(\cdot,t)\|_{L^2}^2
    &\leq 
    -\nu \|\omega\|_{\dot{H}^1}^2
    +\|S\omega\|_{L^r} \|v\|_{L^q}
    +\|S\omega\|_{L^1}\|\sigma\|_{L^\infty}\\
    &\leq 
    -\nu \|\omega\|_{\dot{H}^1}^2
    +\|S\|_{L^{2r}} \|\omega\|_{L^{2r}} \|v\|_{L^q}
    +\|S\|_{L^2}\|\omega\|_{L^2}\|\sigma\|_{L^\infty}.
\end{align}

Next we observe that $\frac{3}{2}<q<\infty,$ and so
$1<r<3,$ and consequently $2<2r<6.$ Let $\rho=\frac{3}{2q}.$
We can see that $0<\rho<1,$ and
\begin{align}
    (1-\rho) \frac{1}{2}+ \rho \frac{1}{6}
    &=
    \frac{1}{2}-\frac{\rho}{3}\\
    &=
    \frac{1}{2}-\frac{1}{2q}\\
    &=
    \frac{1}{2}-\frac{1}{2}\left(1-\frac{1}{r}\right)\\
    &=
    \frac{1}{2r}.
\end{align}
Therefore, we can interpolate between $L^2$ and $L^6$ and find that
\begin{equation}
    \|S\|_{L^{2r}}\leq 
    \|S\|_{L^2}^{1-\frac{3}{2q}} \|S\|_{L^6}^{\frac{3}{2q}}
\end{equation}
and
\begin{equation}
    \|\omega\|_{L^{2r}}\leq 
    \|\omega\|_{L^2}^{1-\frac{3}{2q}} \|\omega\|_{L^6}^{\frac{3}{2q}}
\end{equation}
Applying these interpolation inequalities, the Sobolev inequality, and the isometry in Proposition \ref{isometry},
we find that
\begin{align}
    \partial_t \frac{1}{2} \|\omega(\cdot,t)\|_{L^2}^2
    &\leq 
    -\nu \|\omega\|_{\dot{H}^1}^2
    +\|S\|_{L^2}^{1-\frac{3}{2q}}
    \|S\|_{L^6}^{\frac{3}{2q}}
    \|\omega\|_{L^2}^{1-\frac{3}{2q}} \|\omega\|_{L^6}^{\frac{3}{2q}} 
    \|v\|_{L^q}
    +\|S\|_{L^2}\|\omega\|_{L^2}\|\sigma\|_{L^\infty}\\
    &\leq
    -\nu \|\omega\|_{\dot{H}^1}^2
    +C\|S\|_{L^2}^{1-\frac{3}{2q}}
    \|S\|_{\dot{H}^1}^{\frac{3}{2q}}
    \|\omega\|_{L^2}^{1-\frac{3}{2q}} \|\omega\|_{\dot{H}^1}^{\frac{3}{2q}} 
    \|v\|_{L^q}
    +\|S\|_{L^2}\|\omega\|_{L^2}\|\sigma\|_{L^\infty} \\
    &\leq
    -\nu \|\omega\|_{\dot{H}^1}^2
    +C \|\omega\|_{L^2}^{2-\frac{3}{q}} \|\omega\|_{\dot{H}^1}^{\frac{3}{q}} 
    \|v\|_{L^q}
    +\frac{1}{\sqrt{2}}\|\omega\|_{L^2}^2 
    \|\sigma\|_{L^\infty}.
\end{align}
Multiplying both sides by $2$ and substituting
$2-\frac{3}{q}=\frac{2}{p}$ we find that
\begin{equation}
    \partial_t \|\omega(\cdot,t)\|_{L^2}^2
    =
   -2\nu \|\omega\|_{\dot{H}^1}^2
    +C \|\omega\|_{L^2}^{\frac{2}{p}} \|\omega\|_{\dot{H}^1}^{\frac{3}{q}} 
    \|v\|_{L^q}
    +\sqrt{2}\|\omega\|_{L^2}^2 \|\sigma\|_{L^\infty}.
\end{equation}

Let $b=\frac{2q}{3}.$ Clearly $1<b<+\infty,$ and recalling that 
$\frac{2}{p}+\frac{3}{q}=2,$ we can see that
\begin{align}
    \frac{1}{p}+\frac{1}{b}
    &=
    \frac{1}{p}+\frac{3}{2q}\\
    &=
    1.
\end{align}
Applying Young's inequality with exponents $p,b$ we find
\begin{equation}
    \frac{C}{\nu} \|v\|_{L^q}\|\omega\|_{L^2}^{\frac{2}{p}} 
    \|\omega\|_{\dot{H}^1}^{\frac{3}{q}}
    \leq
    \frac{C_p}{\nu^p}\|v\|_{L^q}^p\|\omega\|_{L^2}^2
    +2\|\omega\|_{\dot{H}^1}^2.
\end{equation}
This immediately implies that
\begin{equation}
    -2 \nu \|\omega\|_{\dot{H}^1}^2+
    C \|v\|_{L^q}\|\omega\|_{L^2}^{\frac{2}{p}} 
    \|\omega\|_{\dot{H}^1}^{\frac{3}{q}}
    \leq
    \frac{C_p}{\nu^{p-1}}\|v\|_{L^q}^p \|\omega\|_{L^2}^2.
\end{equation}
Therefore we may conclude that
for all $0<t<T_{max}$
\begin{equation}
    \partial_t \|\omega(\cdot,t)\|_{L^2}^2
    \leq
    \left(\frac{C_p}{\nu^{p-1}}\|v\|_{L^q}^p
    +\sqrt{2} \|\sigma\|_{L^\infty}\right)
    \|\omega\|_{L^2}^2.
\end{equation}
Applying Gr\"onwall's inequality, we find that for all $0<T<T_{max},$
\begin{equation}
    \|\omega(\cdot,T)\|_{L^2}^2 \leq 
    \left\| \omega^0 \right\|_{L^2}^2
    \exp\left(\frac{C_p}{\nu^{p-1}} \int_0^T 
    \|v(\cdot,t)\|_{L^q}^p \diff t
    +\sqrt{2}\int_0^T \|\sigma(\cdot,t)\|_{L^\infty} \diff t
    \right).
\end{equation}
This completes the proof.
\end{proof}

We will now prove a corollary that requires the concentration of the $L^p_T L^q_x$ norm of $\omega$ at large values in the range 
for all 
$\frac{3}{2}<q<+\infty, \frac{2}{p}+\frac{3}{q}=2$,
as $t\to T_{max}$, when $T_{max}<+\infty.$

\begin{corollary} \label{VortCor}
Suppose $u\in C\left(\left[0,T_{max}\right);
\dot{H}^1_{df}\right)$ is a smooth solution of the Navier--Stokes equation, and
suppose $h\in L^1\left(\left[0,T_{max}\right);
\mathbb{R}^+\right)$. 
Let $\frac{3}{2}<q<+\infty,
\frac{2}{p}+\frac{3}{q}=2,$ and let
\begin{equation}
    v(x,t)= \begin{cases}
        \omega(x,t), &\text{if } |\omega(x,t)|> h(t)
        \\
        0, &\text{if } |\omega(x,t)|\leq h(t) 
        \end{cases}.
\end{equation}
Then for all $0<T<T_{max}$
\begin{equation} \label{EnstrophyBoundVortCor}
    \|\omega(\cdot,T)\|_{L^2}^2 \leq 
    \left\| \omega^0 \right\|_{L^2}^2
    \exp\left(\frac{C_p}{\nu^{p-1}} \int_0^T 
    \|v(\cdot,t)\|_{L^q}^p \diff t
    +\sqrt{2}\int_0^T  h(t) \diff t
    \right),
\end{equation}
where $C_p$ depends only on $p.$
In particular if $T_{max}<+\infty,$ then
\begin{equation}
    \int_0^{T_{max}} 
    \|v(\cdot,t)\|_{L^q}^p \diff t
    =+\infty.
\end{equation}
\end{corollary}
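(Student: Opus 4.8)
The plan is to deduce this corollary directly from the sum space regularity criterion in Theorem \ref{VortSumSpace}, exactly paralleling the way Corollary \ref{StrainCor} follows from Theorem \ref{StrainSumSpace} and Corollary \ref{VelocityCor} follows from Theorem \ref{VelocitySumSpace}. The essential observation is that the pointwise cutoff defining $v$ has a natural complement, so the given decomposition of $\omega$ into a large-values part and a small-values part is precisely of the form required by the hypothesis $\omega = v + \sigma$ in Theorem \ref{VortSumSpace}.

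First I would define the complementary function
\begin{equation}
    \sigma(x,t)= \begin{cases}
        \omega(x,t), &\text{if } |\omega(x,t)|\leq h(t) \\
        0, &\text{if } |\omega(x,t)|> h(t)
        \end{cases},
\end{equation}
so that by construction $\omega = v + \sigma$, and moreover $\|\sigma(\cdot,t)\|_{L^\infty} \leq h(t)$ for all $0 < t < T_{max}$, since $\sigma$ is supported on the set where $|\omega| \leq h(t)$ and vanishes elsewhere. With this decomposition in hand, I would apply Theorem \ref{VortSumSpace} to obtain the enstrophy bound with $\|\sigma(\cdot,t)\|_{L^\infty}$ appearing in the exponent, and then use $\|\sigma(\cdot,t)\|_{L^\infty} \leq h(t)$ together with the monotonicity of the exponential to replace $\|\sigma(\cdot,t)\|_{L^\infty}$ by $h(t)$, yielding \eqref{EnstrophyBoundVortCor}.

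For the blowup statement, I would invoke the fact established earlier that if $T_{max} < +\infty$ then $\lim_{T \to T_{max}} \|\omega(\cdot,T)\|_{L^2}^2 = +\infty$. Since the left-hand side of \eqref{EnstrophyBoundVortCor} diverges while $\|\omega^0\|_{L^2}^2$ is finite, the exponent must diverge, so
\begin{equation}
    \frac{C_p}{\nu^{p-1}} \int_0^{T_{max}} \|v(\cdot,t)\|_{L^q}^p \diff t
    + \sqrt{2}\int_0^{T_{max}} h(t) \diff t = +\infty.
\end{equation}
By hypothesis $h \in L^1\left([0,T_{max});\mathbb{R}^+\right)$, so $\int_0^{T_{max}} h(t)\,\diff t < +\infty$, and therefore the first integral alone must be infinite, giving $\int_0^{T_{max}} \|v(\cdot,t)\|_{L^q}^p \diff t = +\infty$.

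There is no substantive obstacle here: the content is entirely contained in Theorem \ref{VortSumSpace}, and the corollary is a routine specialization obtained by choosing the explicit truncation decomposition. The only point requiring a moment's care is verifying that the complementary piece $\sigma$ genuinely satisfies the $L^\infty$ bound by $h(t)$, which is immediate from its definition, and that $h \in L^1$ is exactly what is needed to absorb the second integral and isolate the divergence of the critical $L^p_T L^q_x$ norm of the large-values part $v$.
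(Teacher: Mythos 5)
Your proposal is correct and follows the paper's own proof essentially verbatim: define the complementary truncation $\sigma$, observe $\|\sigma(\cdot,t)\|_{L^\infty}\leq h(t)$, apply Theorem \ref{VortSumSpace}, and use $h\in L^1$ to isolate the divergence of $\int_0^{T_{max}}\|v(\cdot,t)\|_{L^q}^p\diff t$. No gaps.
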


\begin{proof}
We will begin by letting
\begin{equation}
    \sigma(x,t)= \begin{cases}
        \omega(x,t), &\text{if } |\omega(x,t)|\leq h(t) \\
        0, &\text{if } |\omega(x,t)|> h(t)
        \end{cases}.
\end{equation}
We can see immediately that for all $0<t<T_{max},$
\begin{equation}
   \|\sigma(\cdot,t)\|_{L^\infty}\leq h(t), 
\end{equation}
and that
\begin{equation}
    \omega=v+\sigma.
\end{equation}
Therefore we can apply Theorem \ref{VortSumSpace} and find that
\begin{align}
    \|\omega(\cdot,T)\|_{L^2}^2 
    &\leq 
    \left\| \omega^0 \right\|_{L^2}^2
    \exp\left(\frac{C_p}{\nu^{p-1}} \int_0^T 
    \|v(\cdot,t)\|_{L^q}^p \diff t
    +\sqrt{2}\int_0^T \|\sigma(\cdot,t)\|_{L^\infty} \diff t
    \right)\\
    &\leq
    \left\| \omega^0 \right\|_{L^2}^2
    \exp\left(\frac{C_p}{\nu^{p-1}} \int_0^T 
    \|v(\cdot,t)\|_{L^q}^p \diff t
    +\sqrt{2}\int_0^T  h(t) \diff t
    \right).
\end{align}

Next we will note, as in Theorem \ref{VortSumSpace}, that if $T_{max}<+\infty,$ then
\begin{equation}
    \lim_{T\to T_{max}}\|\omega(\cdot,T)\|_{L^2}^2=+\infty.
\end{equation}
Therefore we can conclude that if $T_{max}<+\infty,$ then
\begin{equation}
    \frac{C_p}{\nu^{p-1}} \int_0^{T_{max}} 
    \|v(\cdot,t)\|_{L^q}^p \diff t
    +\sqrt{2}\int_0^{T_{max}}  h(t) \diff t
    =+\infty.
\end{equation}
However, we know by hypothesis that
\begin{equation}
    \int_0^{T_{max}}  h(t) \diff t<+\infty,
\end{equation}
so we may conclude that
\begin{equation}
    \int_0^{T_{max}} 
    \|v(\cdot,t)\|_{L^q}^p \diff t
    =+\infty.
\end{equation}
This completes the proof.
\end{proof}

Using Theorem \ref{VortSumSpace} and Theorem \ref{MixedLpSum}, we will now extend our regularity criterion for the vorticity from the space 
$L^p_T L^q_x+ L^1_T L^\infty_x$
to the slightly larger space
$L^p_T L^{q,\infty}_x+ L^1_T L^\infty_x$,
proving Corollary \ref{VortSumSpaceWeakIntro}, which is restated here for the reader's convenience.

\begin{corollary} \label{VortSumSpaceWeak}
Suppose $u\in C\left(\left[0,T_{max}\right);
\dot{H}^1_{df}\right)$ is a smooth solution of the Navier--Stokes equation. 
Let $\frac{3}{2}<q<+\infty, \frac{2}{p}+\frac{3}{q}=2,$ 
and let 
$\omega=v+\sigma$. 
Then for all $0<T<T_{max},$
\begin{equation} \label{EnstrophyBoundVortWeak}
    \|\omega(\cdot,T)\|_{L^2}^2 \leq 
    \left\| \omega^0 \right\|_{L^2}^2
    \exp\left(\Tilde{C_p} \int_0^T 
    \|v(\cdot,t)\|_{L^{q,\infty}}^p \diff t
    +\sqrt{2}\int_0^T \|\sigma(\cdot,t)\|_{L^\infty} 
    \diff t \right),
\end{equation}
where
\begin{equation}
    \Tilde{C}_p=
    \frac{C_{p'}}{\nu^{p'-1}}
    \left(\frac{q}{q-q'}\right)^\frac{p'}{q'}
    +\sqrt{2},
\end{equation}
with $C_{p'}$ taken as in Theorem \ref{VortSumSpace},
and $\frac{3}{2}<q'<q, \frac{2}{p'}+\frac{3}{q'}=2.$
In particular if $T_{max}<+\infty,$ then
\begin{equation}
    \Tilde{C_p} \int_0^T 
    \|v(\cdot,t)\|_{L^{q,\infty}}^p \diff t
    +\sqrt{2}\int_0^T \|\sigma(\cdot,t)\|_{L^\infty} 
    \diff t
    =+\infty.
\end{equation}
\end{corollary}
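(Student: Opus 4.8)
The plan is to follow the template already established in the proof of Corollary \ref{StrainSumSpaceWeak}, exploiting the fact that the vorticity criterion in Theorem \ref{VortSumSpace} is set on the $L^1_T L^\infty_x$ scale, so that the relevant decomposition exponent in Theorem \ref{MixedLpSum} is $k=1$. As in the previous corollaries, I would first observe that if $T_{max}<+\infty$ then $\|\omega(\cdot,T)\|_{L^2}^2 \to +\infty$ as $T \to T_{max}$; hence the divergence of the integral is an immediate consequence once the enstrophy bound \eqref{EnstrophyBoundVortWeak} is established, and it suffices to prove that bound.

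To produce the bound, I would fix an auxiliary exponent $\frac{3}{2}<q'<q$ with $\frac{2}{p'}+\frac{3}{q'}=2$ and split the $L^{q,\infty}_x$-piece $v$ according to Theorem \ref{MixedLpSum}. The scaling hypotheses of that theorem hold with $k=1$ and $m=\frac{3}{2}$, since $\frac{1}{p}+\frac{3}{2q}=\frac{1}{2}\left(\frac{2}{p}+\frac{3}{q}\right)=1$ and likewise for $(p',q')$, while $m=\frac{3}{2}<q'<q$. Writing $v=\phi+\psi$ with cutoff $\|v(\cdot,t)\|_{L^{q,\infty}}^{p}$ (the exponent $\frac{p}{k}$ equals $p$ here), Theorem \ref{MixedLpSum} supplies
\[
\int_0^T \|\phi(\cdot,t)\|_{L^{q'}}^{p'}\diff t \leq \left(\frac{q}{q-q'}\right)^{\frac{p'}{q'}}\int_0^T \|v(\cdot,t)\|_{L^{q,\infty}}^p\diff t, \qquad \int_0^T \|\psi(\cdot,t)\|_{L^\infty}\diff t \leq \int_0^T \|v(\cdot,t)\|_{L^{q,\infty}}^p\diff t.
\]

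With $\omega = \phi + \psi + \sigma$ in hand, I would apply Theorem \ref{VortSumSpace}, placing $\phi$ in the $L^{p'}_T L^{q'}_x$ slot and $\psi+\sigma$ in the $L^1_T L^\infty_x$ slot, to obtain the enstrophy bound with exponent $\frac{C_{p'}}{\nu^{p'-1}}\int_0^T\|\phi\|_{L^{q'}}^{p'}\diff t + \sqrt{2}\int_0^T\|\psi+\sigma\|_{L^\infty}\diff t$. The remaining work is purely combinatorial: substituting the first displayed bound controls the $L^{q'}$ term by $\frac{C_{p'}}{\nu^{p'-1}}\left(\frac{q}{q-q'}\right)^{\frac{p'}{q'}}\int_0^T\|v\|_{L^{q,\infty}}^p\diff t$, while the triangle inequality $\|\psi+\sigma\|_{L^\infty}\leq \|\psi\|_{L^\infty}+\|\sigma\|_{L^\infty}$ together with the second bound gives $\sqrt{2}\int_0^T\|\psi+\sigma\|_{L^\infty}\diff t \leq \sqrt{2}\int_0^T\|v\|_{L^{q,\infty}}^p\diff t + \sqrt{2}\int_0^T\|\sigma\|_{L^\infty}\diff t$. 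Collecting the two $\|v\|_{L^{q,\infty}}^p$ contributions reproduces exactly the constant $\tilde C_p = \frac{C_{p'}}{\nu^{p'-1}}\left(\frac{q}{q-q'}\right)^{\frac{p'}{q'}}+\sqrt{2}$.

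I do not expect a serious obstacle here; the argument is essentially a transcription of the strain case, and is in fact cleaner than the velocity case (Corollary \ref{VelocitySumSpaceWeak}) precisely because the $L^\infty_x$ norm enters the vorticity criterion linearly rather than squared. The only point requiring a little care is this linearity: since the exponent on $\|\sigma\|_{L^\infty}$ is one, the triangle inequality passes through without invoking $(a+b)^2\leq 2a^2+2b^2$, so no extra factor of $2$ is introduced and the clean additive constant $\sqrt{2}$ emerges directly. One should also confirm that $q'$ can be chosen strictly between $\frac{3}{2}$ and $q$ so that Theorem \ref{MixedLpSum} applies, which is immediate.
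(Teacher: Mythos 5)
Your proposal is correct and follows essentially the same route as the paper's own proof: fix $q'$ with $\frac{3}{2}<q'<q$, split $v=\phi+\psi$ at the cutoff $\|v(\cdot,t)\|_{L^{q,\infty}}^{p}$ via Theorem \ref{MixedLpSum} with $k=1$, and feed $\phi$ and $\psi+\sigma$ into Theorem \ref{VortSumSpace}, using the linearity of the $L^\infty$ term to avoid any extra factor. Your observation about why the constant $\sqrt{2}$ emerges cleanly here, in contrast to the squared $L^\infty$ norm in the velocity case, matches the paper's computation exactly.
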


\begin{proof}
We know that if $T_{max}<+\infty,$ then 
\begin{equation}
    \lim_{T \to T_{max}}\|\omega(\cdot,T)\|_{L^2}^2
    =+\infty,   
\end{equation}
so it suffices to prove the bound
\eqref{EnstrophyBoundVortWeak}.

We will begin fixing $\frac{3}{2}<q'<q$ and by setting
\begin{equation}
    \phi(x,t)= \begin{cases}
        v(x,t), &\text{if } |v(x,t)|>
        \|v(\cdot,t)\|_{L^{q,\infty}}^p
        \\
        0, &\text{if } |v(x,t)|\leq
        \|v(\cdot,t)\|_{L^{q,\infty}}^p
            \end{cases},
\end{equation}
and
\begin{equation}
    \psi(x,t)= \begin{cases}
        v(x,t), &\text{if } |v(x,t)| \leq
        \|v(\cdot,t)\|_{L^{q,\infty}}^p
        \\
        0, &\text{if } |v(x,t)| >
        \|v(\cdot,t)\|_{L^{q,\infty}}^p
            \end{cases}.
\end{equation}
It is clear that $v=\phi+\psi$,
and applying Theorem \ref{MixedLpSum},
we have the bounds
\begin{equation} \label{VortBoundA}
    \int_0^T \|\phi(\cdot,t)\|_{L^{q'}}^{p'} \diff t
    \leq \left(\frac{q}{q-q'}\right)^\frac{p'}{q'} 
    \int_0^T \|v(\cdot,t)\|_{L^{q,\infty}}^p \diff t,
\end{equation}
and
\begin{equation} \label{VortBoundB}
    \int_0^T \|\psi(\cdot,t)\|_{L^\infty} \diff t
    \leq \int_0^T 
    \|v(\cdot,t)\|_{L^{q,\infty}}^p \diff t.
\end{equation}
Recalling that $\omega=\phi+\psi+\sigma,$ and applying \eqref{VortBoundA}, \eqref{VortBoundB}, and Theorem \ref{VortSumSpace}, we can conclude that
for all $\frac{3}{2}<q'<q,$
and for all $0<T<T_{max}$
\begin{align}
    \|\omega(\cdot,T)\|_{L^2}^2 
    &\leq 
    \left\| \omega^0 \right\|_{L^2}^2
    \exp\left(\frac{C_{p'}}{\nu^{p'-1}} \int_0^T 
    \|\phi(\cdot,t)\|_{L^{q'}}^{p'} \diff t
    +\sqrt{2}\int_0^T \|\psi(\cdot,t)
    +\sigma(\cdot,t)\|_{L^\infty} \diff t \right) \\
    &\leq
    \left\| \omega^0 \right\|_{L^2}^2
    \exp\left(\left(\frac{C_{p'}}{\nu^{p'-1}}
    \left(\frac{q}{q-q'}\right)^\frac{p'}{q'} 
    +\sqrt{2}\right)
    \int_0^T \|v(\cdot,t)\|_{L^{q,\infty}}^p \diff t
    +\sqrt{2}\int_0^T \|\sigma(\cdot,t)\|_{L^\infty} \diff t
    \right).
\end{align}
This completes the proof.
\end{proof}

Finally, we will prove the endpoint regularity criterion, Theorem \ref{VortEndpointIntro}, which is restated here for the reader's convenience.

\begin{theorem} \label{VortEndpoint}
Suppose $u\in C\left(\left[0,T_{max}\right);
\dot{H}^1_{df}\right)$ is a smooth solution of the Navier--Stokes equation, and
suppose $h\in L^1\left(\left[0,T_{max}\right);
\mathbb{R}^+\right)$. 
Let
\begin{equation}
    v(x,t)= \begin{cases}
        \omega(x,t), &\text{if } |\omega(x,t)|> h(t)
        \\
        0, &\text{if } |\omega(x,t)|\leq h(t)
        \end{cases}.
\end{equation}
If $T_{max}<+\infty,$ then
\begin{equation}
    \limsup_{t \to T_{max}}
    \left\|v(\cdot,t)\right\|_{L^\frac{3}{2}}
    \geq \frac{3 \pi^\frac{4}{3}}{2^\frac{5}{6}} \nu.
\end{equation}
\end{theorem}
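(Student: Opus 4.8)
The plan is to argue by contradiction and reduce everything to a linear Grönwall estimate for the enstrophy near the blowup time, exactly the scheme used for $\lambda_2^+$ in Theorem \ref{EigenEndpoint} and for $u$ in Theorem \ref{VelocityEndpoint}. So I would suppose $T_{max}<+\infty$ but $\limsup_{t\to T_{max}}\|v(\cdot,t)\|_{L^{3/2}}<\frac{3\pi^{4/3}}{2^{5/6}}\nu$, which produces an $\epsilon>0$ with $\|v(\cdot,t)\|_{L^{3/2}}<\frac{3\pi^{4/3}}{2^{5/6}}\nu$ for all $T_{max}-\epsilon<t<T_{max}$. As in Corollary \ref{VortCor} I would set the complementary piece $\sigma=\omega-v$, which by construction satisfies $\|\sigma(\cdot,t)\|_{L^\infty}\le h(t)$, so that $\omega=v+\sigma$.

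The engine is the enstrophy identity from the proof of Theorem \ref{VortSumSpace},
\[
\partial_t\tfrac12\|\omega\|_{L^2}^2=-\nu\|\omega\|_{\dot H^1}^2+\langle S\omega;\omega\rangle=-\nu\|\omega\|_{\dot H^1}^2+\langle S\omega;v\rangle+\langle S\omega;\sigma\rangle.
\]
The $\sigma$ term I would handle crudely by $\langle S\omega;\sigma\rangle\le\|S\|_{L^2}\|\omega\|_{L^2}\|\sigma\|_{L^\infty}$, and then use the isometry in Proposition \ref{isometry} together with $\|\sigma\|_{L^\infty}\le h$ to reduce it to $\tfrac1{\sqrt2}h(t)\|\omega\|_{L^2}^2$. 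For the critical term I would pair $S\omega$ against $v$ by H\"older at the dual exponents $3$ and $\tfrac32$, split $\|S\omega\|_{L^3}\le\|S\|_{L^6}\|\omega\|_{L^6}$ by a second H\"older at exponents $6,6$, apply the sharp Sobolev inequality (Theorem \ref{Sobolev}) to both factors, and finally use the isometry $\|S\|_{\dot H^1}^2=\tfrac12\|\omega\|_{\dot H^1}^2$ to collapse the product onto a single $\|\omega\|_{\dot H^1}^2$.

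The one delicate point, which I expect to carry the whole argument, is the constant bookkeeping: I must check that these factors multiply out to exactly $\frac{1}{3\sqrt2}\left(\frac{2}{\pi}\right)^{4/3}=\big(\tfrac{3\pi^{4/3}}{2^{5/6}}\big)^{-1}$, giving
\[
\langle S\omega;v\rangle\le\frac{\|v\|_{L^{3/2}}}{\tfrac{3\pi^{4/3}}{2^{5/6}}}\,\|\omega\|_{\dot H^1}^2 .
\]
Once this holds, the hypothesis $\|v\|_{L^{3/2}}<\tfrac{3\pi^{4/3}}{2^{5/6}}\nu$ makes the coefficient of $\nu\|\omega\|_{\dot H^1}^2$ strictly negative on $(T_{max}-\epsilon,T_{max})$, so the dissipation absorbs the critical term and, after multiplying by $2$, leaves
\[
\partial_t\|\omega(\cdot,t)\|_{L^2}^2\le\sqrt2\,h(t)\,\|\omega(\cdot,t)\|_{L^2}^2 .
\]
Grönwall's inequality then bounds $\|\omega(\cdot,T)\|_{L^2}^2$ on $(T_{max}-\epsilon,T_{max})$ by $\|\omega(\cdot,T_{max}-\epsilon)\|_{L^2}^2\exp\big(\sqrt2\int_{T_{max}-\epsilon}^{T_{max}}h\big)$, which is finite because $h\in L^1$; this contradicts $\lim_{t\to T_{max}}\|\omega(\cdot,t)\|_{L^2}=+\infty$, which must hold when $T_{max}<+\infty$. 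The only genuine obstacle is assembling the two Sobolev constants and the isometry factor $1/\sqrt2$ into precisely the sharp threshold $\frac{3\pi^{4/3}}{2^{5/6}}\nu$; the remainder is the identical contradiction scheme already used for the strain and velocity endpoints.
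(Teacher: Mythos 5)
Your proposal is correct and follows essentially the same route as the paper's proof: the same contradiction setup, the same decomposition $\omega=v+\sigma$ with $\|\sigma(\cdot,t)\|_{L^\infty}\le h(t)$, the same H\"older--Sobolev--isometry treatment of $\left<S\omega;v\right>$ and $\left<S\omega;\sigma\right>$, and the same Gr\"onwall conclusion. Your constant bookkeeping also checks out, since $\frac{1}{3\sqrt{2}}\left(\frac{2}{\pi}\right)^{4/3}=\frac{2^{5/6}}{3\pi^{4/3}}$ is exactly the reciprocal of the threshold $\frac{3\pi^{4/3}}{2^{5/6}}$.
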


\begin{proof}
Suppose towards contradiction that $T_{max}<+\infty$ and
\begin{equation}
    \limsup_{t \to T_{max}}
    \left\|v(\cdot,t)\right\|_{L^\frac{3}{2}}
    < \frac{3 \pi^\frac{4}{3}}{2^\frac{5}{6}} \nu.
\end{equation}
This means that there exists $\epsilon>0,$ such that for all $T_{max}-\epsilon<t<T_{max},$
\begin{equation}
    \left\|v(\cdot,t)\right\|_{L^\frac{3}{2}}
    < \frac{3 \pi^\frac{4}{3}}{2^\frac{5}{6}} \nu.
\end{equation}
We will again let
\begin{equation}
    \sigma(x,t)= \begin{cases}
        \omega(x,t), &\text{if } |\omega(x,t)|\leq h(t) \\
        0, &\text{if } |\omega(x,t)|> h(t)
        \end{cases}.
\end{equation}
We can see immediately that for all $0<t<T_{max},$
\begin{equation}
   \|\sigma(\cdot,t)\|_{L^\infty}\leq h(t), 
\end{equation}
and that
\begin{equation}
    \omega=v+\sigma.
\end{equation}
Applying our standard identity for enstrophy growth we can see that
\begin{align}
    \partial_t \frac{1}{2} \|\omega(\cdot,t)\|_{L^2}^2
    &=
    -\nu \|\omega\|_{\dot{H}^1}^2
    +\left<S;\omega \otimes \omega\right>\\
    &=
    -\nu \|\omega\|_{\dot{H}^1}^2
    +\left<S\omega; v\right>
    +\left<S\omega; \sigma \right>.
\end{align}
Applying H\"older's inequality with exponents $\frac{3}{2},3$ and $1,\infty,$ the Sobolev inequality, and the isometry in Proposition \ref{isometry}.
\begin{align}
    \partial_t \frac{1}{2} \|\omega(\cdot,t)\|_{L^2}^2
    &\leq 
    -\nu \|\omega\|_{\dot{H}^1}^2
    +\|S\omega\|_{L^3} \|v\|_{L^\frac{3}{2}}
    +\|S\omega\|_{L^1}\|\sigma\|_{L^\infty}\\
    &\leq 
    -\nu \|\omega\|_{\dot{H}^1}^2
    +\|S\|_{L^6} \|\omega\|_{L^6} \|v\|_{L^\frac{3}{2}}
    +h \|S\|_{L^2}\|\omega\|_{L^2} \\
    &\leq
    -\nu \|\omega\|_{\dot{H}^1}^2
    +\frac{2^\frac{4}{3}}{3 \pi^\frac{4}{3}}
    \|S\|_{\dot{H}^1} \|\omega\|_{\dot{H}^1}
    \|v\|_{L^\frac{3}{2}}
    +h \|S\|_{L^2}\|\omega\|_{L^2} \\
    &=
    -\nu \|\omega\|_{\dot{H}^1}^2
    + \frac{2^\frac{5}{6}}{3 \pi^\frac{4}{3}}
    \|\omega\|_{\dot{H}^1}^2 \|v\|_{L^\frac{3}{2}}
    +\frac{1}{\sqrt{2}} h\|\omega\|_{L^2}\\
    &=
    -\nu \|\omega\|_{\dot{H}^1}^2 \left(
    1-\frac{2^\frac{5}{6}}{3\pi^\frac{4}{3} \nu}
    \|v\|_{L^\frac{3}{2}} \right)
    +\frac{1}{\sqrt{2}} h\|\omega\|_{L^2}
\end{align}

Recall that by hypothesis 
for all $T_{max}-\epsilon<t<T_{max},$
\begin{equation}
    \frac{2^\frac{5}{6}}{3\pi^\frac{4}{3} \nu}
    \|v\|_{L^\frac{3}{2}} <1,
\end{equation}
Therefore we can conclude that for all
$T_{max}-\epsilon<t<T_{max},$
\begin{equation}
    \partial_t \|\omega\|_{L^2}^2
    \leq \sqrt{2} h \|\omega\|_{L^2}^2.
\end{equation}
Applying Gr\"onwall's inequality, this implies that
for all $T_{max}-\epsilon<T<T_{max},$
\begin{equation}
    \|\omega(\cdot,T)\|_{L^2}^2 \leq
    \left\|\omega(\cdot,T_{max} -\epsilon)\right\|_{L^2}^2
    \exp \left( \sqrt{2} \int_{T_{max}-\epsilon}^T
    h(t) \diff t\right).
\end{equation}
Using the assumption that
$h\in L^1\left(\left[0,T_{max}\right);
\mathbb{R}^+\right),$ we can conclude that
\begin{align}
    \limsup_{T \to T_{max}} \|\omega(\cdot,T)\|_{L^2}^2
    &\leq
    \left\|\omega(\cdot,T_{max}-\epsilon) \right\|_{L^2}^2
    \exp\left(\sqrt{2} \int_{T_{max}-\epsilon}^{T_{max}} 
    h(t) \diff t\right)\\
    &<
    +\infty.
\end{align}
This contradicts our assumption that $T_{max}<+\infty,$ so this completes the proof.
\end{proof}

\bibliographystyle{plain}
\bibliography{Bib}
\end{document}